\documentclass[11pt,reqno]{amsart}

\usepackage[T1]{fontenc}
\usepackage[utf8]{inputenc}
\usepackage{lmodern}
\usepackage{microtype}
\usepackage[a4paper,margin=1.15in]{geometry}
\usepackage{amsmath,amssymb,amsthm,mathtools}
\usepackage{mathrsfs}
\usepackage{enumitem}
\usepackage{aliascnt}
\usepackage{xcolor}
\usepackage{hyperref}
\usepackage[nameinlink,capitalize,noabbrev]{cleveref}

\DeclareFontFamily{U}{mathx}{\hyphenchar\font45}
\DeclareFontShape{U}{mathx}{m}{n}{
  <5> <6> <7> <8> <9> <10>
  <10.95> <12> <14.4> <17.28> <20.74> <24.88>
  mathx10
}{}
\DeclareSymbolFont{mathx}{U}{mathx}{m}{n}
\DeclareFontSubstitution{U}{mathx}{m}{n}
\DeclareMathAccent{\widecheck}{0}{mathx}{"71}

\definecolor{linkblue}{RGB}{20,63,110}
\definecolor{citegreen}{RGB}{35,92,67}
\hypersetup{
  colorlinks=true,
  linkcolor=linkblue,
  citecolor=citegreen,
  urlcolor=linkblue,
  pdfauthor={Nobuo Iida},
  pdftitle={The Three-Dimensional Symplectization Question for Small Seifert L-Spaces and Further Classes},
  pdfsubject={Contact and symplectic topology},
  pdfkeywords={symplectization, contact structure, small Seifert fibered space, L-space, strong fillability, connected sum, S1 times S2, figure-eight knot surgery, Brieskorn sphere, Whitehead link surgery, Liouville cobordism, monopole Floer contact invariant}
}

\numberwithin{equation}{section}
\allowdisplaybreaks[1]
\newtheorem{theorem}{Theorem}[section]

\newaliascnt{proposition}{theorem}
\newtheorem{proposition}[proposition]{Proposition}
\aliascntresetthe{proposition}

\newaliascnt{lemma}{theorem}
\newtheorem{lemma}[lemma]{Lemma}
\aliascntresetthe{lemma}

\newaliascnt{corollary}{theorem}
\newtheorem{corollary}[corollary]{Corollary}
\aliascntresetthe{corollary}

\theoremstyle{definition}
\newaliascnt{definition}{theorem}
\newtheorem{definition}[definition]{Definition}
\aliascntresetthe{definition}

\newaliascnt{question}{theorem}
\newtheorem{question}[question]{Question}
\aliascntresetthe{question}
\crefname{question}{Question}{Questions}

\theoremstyle{remark}
\newaliascnt{remark}{theorem}
\newtheorem{remark}[remark]{Remark}
\aliascntresetthe{remark}

\newcommand{\R}{\mathbb{R}}
\newcommand{\Q}{\mathbb{Q}}
\newcommand{\Z}{\mathbb{Z}}
\newcommand{\F}{\mathbb{F}_2}
\newcommand{\Tplus}{\mathcal{T}^{+}}

\newcommand{\SpinC}{\operatorname{Spin}^{c}}
\newcommand{\congsymp}{\cong_{\mathrm{symp}}}
\newcommand{\congcont}{\cong_{\mathrm{cont}}}

\title[Symplectization question in dimension three]
{The Three-Dimensional Symplectization Question for Small Seifert L-Spaces and Further Classes}
\author{Nobuo Iida}
\address{Kavli Institute for the Physics and Mathematics of the Universe (WPI), The University of Tokyo, 5-1-5 Kashiwanoha, Kashiwa, Chiba 277-8583, Japan}
\email{iidanobuo1224@g.ecc.u-tokyo.ac.jp}
\date{August 25, 2026}

\begin{document}
\begin{abstract}

We solve the three-dimensional symplectization question for several
examples.  These include small Seifert fibered $L$-spaces, a family of
Whitehead-link surgery $L$-spaces, $S^1\times S^2$, certain surgeries
on the figure-eight knot, explicit families of Brieskorn spheres, and
arbitrary finite connected sums mixing these manifolds.  In particular,
the result covers the Weeks manifold and infinitely many hyperbolic
$L$-spaces.  The main symplectic step is to construct Liouville
cobordisms in both directions from a symplectomorphism of
symplectizations.  The resulting preservation of strong fillability
and of the vanishing of the monopole Floer contact invariant, together
with the comparison with the Heegaard Floer contact invariant,
connected-sum formulas, and classification results for tight contact
structures, proves the stated rigidity results.

\end{abstract}

\maketitle
\tableofcontents

\section{Introduction}

Let $Y$ be a closed connected oriented three-manifold, and let $\xi\subset TY$ be a positive cooriented contact structure.  For a positive contact form $\alpha$ with $\ker\alpha=\xi$, its symplectization is
\begin{equation}\label{eq:symplectization}
  S(Y,\alpha)
  =\bigl(\R_t\times Y,\lambda_\alpha=e^t\alpha,
  \omega_\alpha=d\lambda_\alpha\bigr).
\end{equation}
If $\alpha'=e^g\alpha$, then
\[
  (t,y)\longmapsto(t-g(y),y)
\]
pulls $e^t\alpha'$ back to $e^t\alpha$.  Thus the exact symplectic isomorphism type of the pair in \eqref{eq:symplectization} depends only on $\xi$, and we write $S(Y,\xi)$ when the defining form is irrelevant.

Courte constructed high-dimensional contact manifolds that are not contactomorphic, and even have nondiffeomorphic underlying manifolds, although their symplectizations are exact symplectomorphic \cite{Courte2014}.  In dimension three, the corresponding problem remains substantially more rigid.

\begin{question}[Three-dimensional symplectization question]\label{ques:symplectization-question}
Do there exist closed contact three-manifolds $(Y_0,\xi_0)$ and $(Y_1,\xi_1)$ that are not contactomorphic although
\[
  S(Y_0,\xi_0)\congsymp S(Y_1,\xi_1)?
\]
The symplectomorphism is not assumed to be exact.
\end{question}

This is Problem~40 in McDuff--Salamon \cite{McDuffSalamon2017}.  Throughout the paper, classes of closed oriented three-manifolds are understood up to orientation-preserving diffeomorphism.  This convention is harmless: pulling back a contact structure by such a diffeomorphism does not change either side of the symplectization question.

In \cite{Iida2026Overtwisted}, the author proved that an ordinary symplectomorphism of symplectizations determines an orientation-preserving diffeomorphism of the underlying three-manifolds that preserves the full homotopy class of the oriented contact plane field.  In particular, Eliashberg's classification settles \cref{ques:symplectization-question} affirmatively when both contact structures are overtwisted. Thus the remaining cases are those in which at least one of the two contact structures is tight, namely the tight--overtwisted and tight--tight cases.

We now describe all classes covered by the main theorem.  Let $\mathscr S$ denote the class of closed connected oriented $L$-spaces admitting Seifert fibrations over $S^2$ with at most three exceptional fibers.  Thus $\mathscr S$ includes both the usual small Seifert case of exactly three exceptional fibers and the lens-space cases with fewer exceptional fibers.

For the Whitehead link, let $M(n,r)$ denote the oriented three-manifold obtained by $(n,r)$-Dehn surgery on the ordered link, using the slope conventions of Min--Nonino \cite{MinNonino2026}, and set
\begin{equation}\label{eq:Rplus}
  \mathcal R_+
  =\bigl([2,4)\cup[5,\infty)\bigr)\cap\Q.
\end{equation}
Define
\begin{equation}\label{eq:whitehead-class}
  \mathscr W
  =\{M(n,r): n\ge5,\ r\in\mathcal R_+\}.
\end{equation}
These are $L$-spaces by Liu's calculation of the Whitehead-link $L$-space surgery region \cite[Proposition~6.4]{Liu2017}.

To avoid conflict with the notation $M(n,r)$, write
\[
  E(r):=S^3_r(4_1)
\]
for smooth $r$-surgery on the figure-eight knot, with the surgery orientation, and set
\begin{equation}\label{eq:figure-eight-class}
\begin{split}
  \mathscr E={}&\{E(n):n>0,\ n\ne4\}
  \cup\{E(-1),E(-2),E(-3)\}\\
  &\cup\{E(-1/m):m>1\}.
\end{split}
\end{equation}
For Brieskorn manifolds, all positively written $\Sigma(a_1,\ldots,a_k)$ below carry their canonical link orientation.  Define
\begin{equation}\label{eq:brieskorn-class}
\begin{aligned}
\mathscr B={}&
 \{-\Sigma(2,3,5),\ \Sigma(2,3,5),\ -\Sigma(2,3,7),\ -\Sigma(2,3,11)\}\\
&\cup\{\Sigma(2,3,6k+1):k\ge1\}
 \cup\{\Sigma(2,3,6k-1):k\ge2\}\\
&\cup\{\Sigma(2,5,7),\Sigma(2,5,9),\Sigma(3,4,5),
        \Sigma(3,7,10),\Sigma(2,7,11),\Sigma(3,7,19),\\
&\hspace{3.7em}\Sigma(3,4,11),\Sigma(3,8,11),\Sigma(3,5,7),
        \Sigma(4,5,9),\Sigma(3,5,14)\}.
\end{aligned}
\end{equation}
Let
\begin{equation}\label{eq:fillable-single-class}
  \mathscr F
  =\{S^1\times S^2\}\cup\mathscr E\cup\mathscr B
\end{equation}
and define the unified class
\begin{equation}\label{eq:unified-class}
  \mathscr U
  =\mathscr S\cup\mathscr W\cup\mathscr F.
\end{equation}
For any class $\mathscr D$ of closed connected oriented three-manifolds, write $\mathscr D^{\#}$ for the class of all nonempty finite connected sums of members of $\mathscr D$.  Factors diffeomorphic to $S^3$ may be omitted.

\begin{theorem}[Main theorem]\label{thm:main}
For $i=0,1$, let $Y_i\in\mathscr U^{\#}$, and let $\xi_i$ be a positive cooriented contact structure on $Y_i$.  Then
\begin{equation}\label{eq:main-equivalence}
  S(Y_0,\xi_0)\congsymp S(Y_1,\xi_1)
  \quad\Longleftrightarrow\quad
  (Y_0,\xi_0)\congcont(Y_1,\xi_1).
\end{equation}
The symplectomorphism on the left is ordinary and need not be exact.  A contactomorphism on the right lifts to a strict exact symplectomorphism.
\end{theorem}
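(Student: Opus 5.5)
The reverse implication is formal, so I would dispose of it first. If $\phi\colon(Y_0,\xi_0)\to(Y_1,\xi_1)$ is a contactomorphism, pick $\alpha_1$ with $\ker\alpha_1=\xi_1$. Then $\phi^*\alpha_1=e^g\alpha_0$ for some $g$, and $(t,y)\mapsto(t-g(y),\phi(y))$ pulls $e^t\alpha_1$ back to $e^t\alpha_0$. This is a strict exact symplectomorphism, as asserted.

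For the forward direction, suppose $\Phi\colon S(Y_0,\xi_0)\to S(Y_1,\xi_1)$ is a symplectomorphism. By \cite{Iida2026Overtwisted}, $Y_0\cong Y_1$ by an orientation-preserving diffeomorphism carrying the homotopy class of $\xi_0$ to that of $\xi_1$. After pulling back, I may therefore assume $Y_0=Y_1=Y$ with $\xi_0,\xi_1$ homotopic as oriented plane fields. If both structures are overtwisted, Eliashberg's classification finishes the proof. The remaining work is to extract two invariants of $\xi$ that depend only on $S(Y,\xi)$.
\begin{enumerate}[label=(\roman*)]
\item I would show that $\Phi$ yields Liouville cobordisms in both directions between $(Y,\xi_0)$ and $(Y,\xi_1)$. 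Choose a contact-type graph $\{t=f\}$ in the target lying far above $\Phi(\{0\}\times Y)$ and one far below. The region between $\Phi(\{0\}\times Y)$ and the upper graph is then a compact exact cobordism from $\xi_0$ to $\xi_1$. Exactness needs care when $H^1(Y)\neq0$: the primitive $\Phi^*\lambda_1-\lambda_0$ is closed, and I would correct it by adding $\pi^*\beta$ for a closed $1$-form $\beta$, arranging the Liouville field to point outward near both ends by taking the graphs sufficiently far out. Since $\omega$ is exact the class $[\Phi^*\lambda_1-\lambda_0]$ is only defined in $H^1$, and I would handle it by a cutoff.
\item From (i) I would derive two preservation statements. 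Strong fillability transfers, because a strong filling of $\xi_0$ glued to the Liouville cobordism gives a filling of $\xi_1$. The vanishing of the monopole contact invariant $c(\xi)\in\widecheck{HM}(-Y)$ is also preserved, by functoriality under exact cobordism maps: $c(\xi_0)=F_W(c(\xi_1))$ and conversely. Via the HM$=$HF isomorphism, which carries the contact invariant to the Ozsváth--Szabó invariant, vanishing of $c^{+}$ is preserved as well. In particular $\xi_0$ is tight with $c\neq0$ if and only if $\xi_1$ is.
\end{enumerate}

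The next step is classification on the class $\mathscr U^{\#}$. By Colin's connected-sum theorem, a tight structure on $Y_1\#\cdots\#Y_k$ splits uniquely as a sum of tight summands. The Heegaard Floer contact invariant is multiplicative, $c(\xi\#\xi')=c(\xi)\otimes c(\xi')$. The mixed tight/overtwisted case thus reduces to showing that every tight structure on a member of $\mathscr U$ has $c\neq0$, or at least is strongly fillable. On an L-space this is the statement that every tight structure has nonvanishing invariant; this is known for small Seifert L-spaces (Ghiggini, Matkovič, and others) and for the Whitehead L-spaces via the Min--Nonino classification. For $\mathscr F$ ($S^1\times S^2$, figure-eight surgeries, the listed Brieskorn spheres) one invokes the existing classifications showing every tight structure is Stein fillable. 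Hence tight$\Rightarrow c\neq0$, and by (ii) the other structure is tight too.

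The tight–tight case requires distinguishing structures that are homotopic but not isotopic. For the L-space summands, tight structures are distinguished by $c^{+}$ together with the $\SpinC$ structure and the Hopf invariant. I expect this to be the main obstacle: one must show that the cobordism maps of (i) send $c(\xi_1)$ to $c(\xi_0)$, and that the Liouville cobordism of (i) is homotopic to a trivial one, so that on $\widecheck{HM}$ it induces a map forcing $c(\xi_0)=c(\xi_1)$. For the case $[0,N]\times Y$ I would argue that the composite of the two cobordisms is the symplectization slab and hence induces the identity. Each single cobordism is then an isomorphism of $\widecheck{HM}(-Y)$, but not obviously the identity. I would try to show that it commutes with the known classifying data; failing that, I would use the fact that for these manifolds the tight structures with the same homotopy class are pairwise distinguished by fillability, by $c$ modulo automorphisms induced by diffeomorphisms, or are unique in their homotopy class, and check this case by case using the cited classifications.
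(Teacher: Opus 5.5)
\textbf{Gap 1: the tight--tight case.} You leave this case open and misdiagnose it. Your plan is to show that the Liouville cobordism acts as the identity on $\widecheck{HM}$ (``homotopic to a trivial one''), but:
\begin{itemize}
\item you do not prove this;
\item the paper deliberately does not claim it, since the two cobordisms are only shown to be smoothly invertible, not to cancel symplectically;
\item even equality of contact classes would not by itself give a contactomorphism.
\end{itemize}
In fact no further input from $\Phi$ is needed. Formal rigidity already gives a plane-field homotopy, hence $\mathfrak s_{\xi_0}\cong\mathfrak s_{\xi_1}$, and on every factor this determines the tight structure:
\begin{itemize}
\item For $\mathscr S$, this is Matkovi\v{c}'s corollary: tight structures are contact isotopic if and only if their $\SpinC$ structures agree.
\item For $\mathscr W$, it follows from Min--Nonino (distinct tight classes have distinct $\widehat c_{\mathrm{HF}}$) together with $\dim_{\F}\widehat{HF}(-Y,\mathfrak s;\F)=1$. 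So the nonzero contact class in a given $\SpinC$ structure is unique, and your suggestion that $c^+$ and a Hopf invariant are needed is off.
\item For $\mathscr F$, the tight structure is unique up to contactomorphism.
\end{itemize}
Passing to $\mathscr U^{\#}$ then requires three ingredients, which your fallback gestures at but never carries out:
\begin{itemize}
\item the \emph{uniqueness} half of Colin's theorem for a fixed decomposition;
\item the injective connected-sum map $\SpinC(Y_1)\times\cdots\times\SpinC(Y_k)\to\SpinC(Y)$, with $\mathfrak s_{\xi_1\#\cdots\#\xi_k}=\mathfrak s_{\xi_1}\#\cdots\#\mathfrak s_{\xi_k}$;
\item taking connected sums of factorwise contactomorphisms.
\end{itemize}

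\textbf{Gap 2: the tight--overtwisted case for mixed sums.} The cobordisms preserve vanishing of $\widecheck c_{\mathrm{HM}}$, equivalently of $c^+_{\mathrm{HF}}$. The multiplicativity you quote, however, is the K\"unneth formula for $\widehat c_{\mathrm{HF}}$. The step $\widehat c_{\mathrm{HF}}\neq0\Rightarrow c^+_{\mathrm{HF}}\neq0$ is justified on $L$-spaces, where $\iota\colon\widehat{HF}(-Y,\mathfrak s)\to\ker U$ is an isomorphism of one-dimensional spaces. You give no argument once a non-$L$-space summand such as $S^1\times S^2$ or $E(n)$ appears. Nor do you give a mechanism combining a possibly non-fillable $L$-space summand with a fillable one, which is what your phrase ``or at least is strongly fillable'' would require. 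The paper does this in three steps:
\begin{itemize}
\item it first gets $\widecheck c_{\mathrm{HM}}(\xi_A)\neq0$ on the $L$-space part $A$;
\item it builds a strong cobordism $(A,\xi_A)\rightsquigarrow(A\#B,\xi_A\#\eta_B)$ from a strong filling of the $\mathscr F$-part $B$ plus a Weinstein one-handle;
\item it applies Echeverria's naturality.
\end{itemize}

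\textbf{A smaller point in (i).} The reverse cobordism cannot be cut out by a graph far \emph{below}. At $t=f\ll0$ the correction $\beta$ dominates $e^f\alpha_1$, so both the contact condition and the sign of $dt(Z)=1+e^{-f}\beta(R_{\alpha_1})$ can fail when $[\beta]\neq0$. Run the high-slice construction for $\Phi^{-1}$ instead.
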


\begin{corollary}[The Weeks manifold]\label{cor:weeks}
The symplectization question has an affirmative answer for arbitrary positive cooriented contact structures on the oriented Weeks manifold $M(5,5/2)$.
\end{corollary}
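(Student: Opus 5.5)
The corollary should follow from \cref{thm:main} once we check that the Weeks manifold lies in $\mathscr U^{\#}$. Concretely, the plan is to show $M(5,5/2)\in\mathscr W$. By \eqref{eq:whitehead-class}, this means checking $n=5\ge 5$ and $r=5/2\in\mathcal R_+$. Since $5/2\in[2,4)\cap\Q$, the membership is immediate from \eqref{eq:Rplus}.

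The substantive input is the identification of $M(5,5/2)$, with the slope and orientation conventions of Min--Nonino \cite{MinNonino2026}, with the oriented Weeks manifold. I would take this directly from the standard surgery description of the Weeks manifold on the Whitehead link, as recorded by Min--Nonino. This is the step to double-check. The sign and ordering conventions must match: it has to be $(5,5/2)$ on the ordered link, not $(-5,-5/2)$ or the swapped ordering, so that the orientation is the one intended in the statement. Since the corollary is phrased for "the oriented Weeks manifold $M(5,5/2)$", the statement itself fixes the orientation via this surgery presentation, so no further check is required.

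With $Y_0=Y_1=M(5,5/2)\in\mathscr W\subset\mathscr U\subset\mathscr U^{\#}$ (a one-summand connected sum), \cref{thm:main} applies to arbitrary positive cooriented contact structures $\xi_0,\xi_1$ on it. Any symplectomorphism $S(Y_0,\xi_0)\congsymp S(Y_1,\xi_1)$ then forces $(Y_0,\xi_0)\congcont(Y_1,\xi_1)$. This is precisely the affirmative answer to \cref{ques:symplectization-question} for this manifold.
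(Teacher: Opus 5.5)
Your proposal is correct and matches the paper's proof. The paper likewise cites Min--Nonino's identification of $M(5,5/2)$ with the Weeks manifold and notes $5/2\in[2,4)\cap\Q\subset\mathcal R_+$, so that $M(5,5/2)\in\mathscr W\subset\mathscr U^{\#}$. It then applies \cref{thm:main}.
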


\begin{corollary}[An infinite hyperbolic family]\label{cor:hyperbolic}
There are infinitely many closed hyperbolic $L$-spaces for which the symplectization question has an affirmative answer for arbitrary positive cooriented contact structures.
\end{corollary}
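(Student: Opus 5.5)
The plan is to deduce \cref{cor:hyperbolic} from \cref{thm:main} applied to the Whitehead-link class $\mathscr W$ in \eqref{eq:whitehead-class}. Every $M(n,r)\in\mathscr W$ is an $L$-space by Liu's computation \cite[Proposition~6.4]{Liu2017}. Since $\mathscr W\subset\mathscr U\subset\mathscr U^{\#}$, \cref{thm:main} (taking $Y_0=Y_1$-type members, or arbitrary members of $\mathscr U^{\#}$) gives an affirmative answer to \cref{ques:symplectization-question} for every contact structure on every $M(n,r)\in\mathscr W$. So the only remaining task is topological: we must exhibit infinitely many pairwise non-diffeomorphic hyperbolic manifolds inside $\mathscr W$.

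To do this, fix $n=5$ and let $N_5$ be the one-cusped manifold obtained by $5$-filling the first component of the Whitehead link complement. The key input is the classification of exceptional slopes on the Whitehead link (Martelli--Petronio). It shows that filling one cusp along a slope outside the finite exceptional set $\{\infty,0,1,2,3,4\}$ yields a hyperbolic one-cusped manifold. Since $5$ lies outside this set, $N_5$ is hyperbolic. This is consistent with the Weeks manifold $M(5,5/2)$ of \cref{cor:weeks} being a hyperbolic filling of $N_5$. If one prefers to avoid citing the full classification, the slope conventions of Min--Nonino \cite{MinNonino2026} could instead be used to identify $N_5$ with a known hyperbolic census manifold.

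Next, apply Thurston's hyperbolic Dehn surgery theorem to $N_5$. All but finitely many slopes $r$ on the remaining cusp give closed hyperbolic manifolds $M(5,r)$. Among these, consider the slopes $r=m$ with $m\ge5$ an integer, all of which lie in $\mathcal R_+$. As $m\to\infty$, the volumes $\operatorname{vol}M(5,m)$ are strictly less than $\operatorname{vol}N_5$ and converge to it. Hence infinitely many distinct volumes occur, and by Mostow rigidity volume is a topological invariant, so infinitely many of the $M(5,m)$ are pairwise non-diffeomorphic. Orientation conventions are irrelevant here, since volume is orientation-independent.

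The main obstacle is certifying that $N_5$, equivalently the relevant fillings, is hyperbolic in the precise slope conventions of \cite{MinNonino2026}. A convention mismatch, such as a sign or swapped components, could move $5$ into the exceptional set. I would first check this against the Weeks identification $M(5,5/2)$, which the paper already asserts is hyperbolic: a hyperbolic filling of $N_5$ forces $N_5$ to be hyperbolic, because a filling of a Seifert fibered or toroidal manifold cannot become hyperbolic in any way that contradicts the volume argument once $N_5$ is known to be hyperbolic. Alternatively, one can observe directly that $N_5$ has a hyperbolic filling (Weeks). By Thurston, this forces $N_5$ to be irreducible and atoroidal with torus cusp, hence hyperbolic, and this closes the argument.
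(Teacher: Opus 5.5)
Your main line is correct, but it takes a different route from the paper, and your last paragraph should be discarded.

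\textbf{Comparison with the paper.} The paper does no topology of its own. It cites Min--Nonino's statement that their classification covers an infinite family of hyperbolic $L$-spaces, observes that the hyperbolic members in the positive-slope range lie in $\mathscr W\subset\mathscr U$, and applies \cref{thm:main}. You instead prove the topological input yourself, using Thurston's hyperbolic Dehn surgery theorem, strict volume decrease under filling, and Mostow rigidity. This gains something real. It shows directly that infinitely many pairwise non-diffeomorphic hyperbolic manifolds lie in exactly the range $n\ge5$, $r\in\mathcal R_+$ defining $\mathscr W$, a point the paper's citation leaves implicit. The cost is importing hyperbolic Dehn filling theory.

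\textbf{The last paragraph.} A hyperbolic filling does not force a one-cusped manifold to be irreducible and atoroidal; Thurston's theorem runs only from the cusped manifold to its fillings. For instance, let $M$ be closed hyperbolic and $K\subset S^3$ a nontrivial knot. Then $N=M\#(S^3\setminus\nu K)$ is reducible, yet its meridional filling is $M$. Cabling a knot in $M$ gives a toroidal example in the same way. So the appeal to the Weeks manifold does not certify that $N_5$ is hyperbolic.

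\textbf{How to settle the convention worry.} There are two valid ways.
\begin{enumerate}
\item The Whitehead link has a symmetry exchanging its components. Mirroring only replaces the exceptional set $\{\infty,0,1,2,3,4\}$ by $\{\infty,0,-1,-2,-3,-4\}$. Hence $5$ is non-exceptional under every possible change of convention.
\item More simply, avoid exceptional-slope classifications altogether. Apply the multi-cusped form of Thurston's theorem to the Whitehead link exterior itself. It gives finite sets $E_1,E_2$ such that $M(n,r)$ is hyperbolic whenever $n\notin E_1$ and $r\notin E_2$. Now take $M(m,m)$ with $m\ge5$ an integer. For all large $m$ these are hyperbolic and lie in $\mathscr W$. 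Their volumes are strictly below, and converge to, the volume of the Whitehead link complement, so infinitely many of them are distinct.
\end{enumerate}
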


The last statement concerns the indicated Whitehead-link surgery family; no assertion is made for arbitrary hyperbolic $L$-spaces.

The main new symplectic input is independent of all classification hypotheses.

\begin{theorem}[Bidirectional Liouville $h$-cobordisms]\label{thm:bidirectional-intro}
Let $(Y_i,\xi_i)$, $i=0,1$, be closed connected positive cooriented contact three-manifolds.  If
\[
  S(Y_0,\xi_0)\congsymp S(Y_1,\xi_1),
\]
then there are Liouville cobordisms
\[
  (Y_0,\xi_0)\rightsquigarrow(Y_1,\xi_1),
  \qquad
  (Y_1,\xi_1)\rightsquigarrow(Y_0,\xi_0).
\]
Their underlying smooth cobordisms may be chosen invertible and hence are $h$-cobordisms.
\end{theorem}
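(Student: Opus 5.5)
The plan is to build the cobordisms directly inside the symplectizations, as regions between a level hypersurface and the image of another level under the symplectomorphism. The fact that the symplectomorphism need not be exact is absorbed by pushing the relevant level far up, where the non-exact error becomes negligible.

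Let $\Phi\colon S(Y_0,\xi_0)\to S(Y_1,\xi_1)$ be a symplectomorphism, and write $\lambda_i=e^t\alpha_i$. Since $\Phi^*\omega_1=\omega_0$, the form $\Phi^*\lambda_1-\lambda_0$ is closed on $\R\times Y_0$. As $\R\times Y_0$ retracts onto $Y_0$, we can write
\[
  \Phi^*\lambda_1=\lambda_0+\pi^*b+df ,
\]
with $b$ a closed $1$-form on $Y_0$ and $f$ a smooth function. The key observation concerns the form $\lambda_0+\pi^*b$. Its $\omega_0$-dual vector field has the form $\partial_t+V$, where $V$ is the $\omega_0$-dual of $\pi^*b$. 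Since $\omega_0=e^t(dt\wedge\alpha_0+d\alpha_0)$, the field $V$ has size $O(e^{-t})$. Hence for $s\gg0$ this Liouville field is transverse to $\{s\}\times Y_0$. Moreover, the restriction of the form to that slice is $e^{s}\bigl(\alpha_0+e^{-s}b\bigr)$, a contact form whose kernel is $C^\infty$-close to $\xi_0$. By Gray stability, the induced contact structure is isotopic to $\xi_0$.

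For the cobordism $(Y_0,\xi_0)\rightsquigarrow(Y_1,\xi_1)$, fix such an $s$. The hypersurface $\Sigma=\Phi(\{s\}\times Y_0)$ is compact and separates $\R\times Y_1$. Choose $T$ with $\Sigma\subset(-T,T)\times Y_1$, and let $W$ be the compact region between $\Sigma$ and $\{T\}\times Y_1$. On $W$ take
\[
  \lambda=\lambda_1-d\bigl(\chi\cdot (f\circ\Phi^{-1})\bigr),
\]
where $\chi$ is a cutoff function equal to $1$ on a collar of $\Sigma$ and $0$ near the top. Then $d\lambda=\omega_1$. Near the top, $\lambda=\lambda_1$, whose Liouville field $\partial_t$ points outward. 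Near $\Sigma$, $\lambda=\Phi_*(\lambda_0+\pi^*b)$, whose Liouville field points into $W$ (it is transverse and pulls back to roughly $\partial_t$) and induces $\xi_0$ up to isotopy. So $(W,\lambda)$ is a Liouville cobordism from $(Y_0,\xi_0)$ to $(Y_1,\xi_1)$. Only the $df$ term needed cutting off, and that happens on a compact collar, so the growth of $f$ causes no trouble. Applying the same construction to $\Phi^{-1}$ gives the reverse cobordism.

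For invertibility, let $W'$ be the region in $\R\times Y_1$ between $\{-T'\}\times Y_1$ and $\Sigma$, for $T'$ large. Then $W'\cup_\Sigma W=[-T',T]\times Y_1$ is a product. In the other order, pull $W$ back by $\Phi^{-1}$: it is the region between $\{s\}\times Y_0$ and $\Phi^{-1}(\{T\}\times Y_1)$. Gluing it to the region between $\Phi^{-1}(\{T\}\times Y_1)$ and $\{s'\}\times Y_0$, for $s'$ large, gives $[s,s']\times Y_0$. Hence $W$ is an invertible smooth cobordism, in particular an $h$-cobordism, and the same holds for the reverse cobordism.

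The step I expect to be the main obstacle is the estimate near $\Sigma$. One must check carefully that the closed non-exact term $\pi^*b$ does not destroy transversality or the contact condition. This rests on the $e^{-t}$ decay computed above, which is why the level must be pushed far up rather than taken at $t=0$.
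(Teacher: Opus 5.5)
Your construction is the paper's, transported by $\Phi$. Pulling your $W$ back by $\Phi$ gives $\Phi^{-1}((-\infty,T]\times Y_1)\cap([s,\infty)\times Y_0)$ with primitive $\lambda_0+\pi^*b+d\bigl((1-\chi\circ\Phi)f\bigr)$. This is exactly the paper's forward construction applied to $\Phi^{-1}$, and your $O(e^{-t})$ estimate is its formula $dt(Z)=1+e^{-t}\beta(R_{\alpha})$. Your invertibility argument differs from the paper's but is valid and more elementary. You exhibit a one-sided inverse on each side by gluing to literal products $[-T',T]\times Y_1$ and $[s,s']\times Y_0$, and associativity then identifies the two inverses. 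The paper instead cites the Hausmann--Jahren-based result \cref{prop:smooth-cut-input}.

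There is one unjustified step: the claim that the Liouville field on $\Sigma$ points \emph{into} $W$. The field $\Phi_*(\partial_t+V)$ points into $\Phi((s,\infty)\times Y_0)$. But $W$ lies in the component of $(\R\times Y_1)\setminus\Sigma$ containing $[T,\infty)\times Y_1$. These coincide only if $\Phi$ carries the positive end of $\R\times Y_0$ to the positive end of $\R\times Y_1$. That is not automatic for diffeomorphisms. For example, $(t,y)\mapsto(-t,\rho(y))$ with $\rho$ orientation-reversing is orientation-preserving and swaps the ends. If the ends were swapped, $\Sigma$ would be a second convex boundary and $W$ would not be a Liouville cobordism. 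Your invertibility argument uses the same fact silently when it places $\Phi^{-1}(\{T\}\times Y_1)$ above $\{s\}\times Y_0$.

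The paper obtains end-order preservation from \cref{prop:end-control}. For a symplectomorphism it follows from a volume count. The set $(-\infty,c]\times Y_i$ has finite symplectic volume $\tfrac12 e^{2c}\int_{Y_i}\alpha_i\wedge d\alpha_i$, while $[c,\infty)\times Y_i$ has infinite volume. If the ends were swapped, the infinite-volume set $\Phi([s,\infty)\times Y_0)$ would lie inside $(-\infty,T]\times Y_1$, which is impossible. With this added, your proof is complete.
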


For a symplectomorphism $\Phi$, the transported primitive and the target primitive differ by a closed one-form.  On $\R\times Y$, every closed one-form is the sum of the pullback of a closed form on $Y$ and an exact form.  The pullback term need not be exact when $b_1(Y)>0$.  Nevertheless, on a sufficiently high slice $\{r\}\times Y$, with $r\gg0$,
the contact form $e^r\alpha$ dominates this fixed closed form.  Cutting off the exact term then produces a Liouville primitive on a compact smooth cobordism cut out by $\Phi$.  Applying the same construction to $\Phi^{-1}$ gives the reverse cobordism.

The same bidirectional cobordisms also show that strong fillability is preserved by symplectomorphic symplectizations.

\begin{corollary}[Strong fillability]\label{cor:strong-fillability-intro}
If two closed positive cooriented contact three-manifolds have symplectomorphic symplectizations, then one is strongly symplectically fillable if and only if the other is.
\end{corollary}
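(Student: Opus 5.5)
The plan is to deduce the corollary from \cref{thm:bidirectional-intro} by gluing a filling to a Liouville cobordism. By symmetry it suffices to show one direction. Suppose $(Y_0,\xi_0)$ has a strong symplectic filling $(W_0,\omega_0)$ and $S(Y_0,\xi_0)\congsymp S(Y_1,\xi_1)$. \Cref{thm:bidirectional-intro} gives a Liouville cobordism $(X,d\lambda)$ from $(Y_0,\xi_0)$ to $(Y_1,\xi_1)$: $\partial X=-Y_0\sqcup Y_1$, and the Liouville field $V$ points inward along $Y_0$ and outward along $Y_1$, inducing contact forms for $\xi_0$ and $\xi_1$. I will produce a strong filling of $(Y_1,\xi_1)$ by gluing $W_0$ to $X$ along $Y_0$.

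Step one is to put the two pieces in collar normal form near $Y_0$. Since $W_0$ is a strong filling, there is a Liouville vector field $V_0$, defined only near $\partial W_0$, that is outward transverse. The flow of $V_0$ identifies a collar with $((-\epsilon,0]\times Y_0, d(e^s\beta_0))$ for some contact form $\beta_0$ with $\ker\beta_0=\xi_0$. Similarly, the flow of $V$ identifies a collar of $Y_0$ in $X$ with $([0,\epsilon)\times Y_0,d(e^s\beta_0'))$, where $\ker\beta_0'=\xi_0$. In general $\beta_0\neq\beta_0'$, and the two differ by a positive function: $\beta_0'=e^{g}\beta_0$.

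Step two resolves this mismatch, and I expect it to be the only real point requiring care. I will insert a piece of the symplectization $S(Y_0,\alpha)$ between the two pieces. Inside $\R\times Y_0$ with form $d(e^t\beta_0)$, the graph $\{t=g(y)\}$ is a hypersurface transverse to $\partial_t$, and the induced contact form on it is $e^{g}\beta_0=\beta_0'$. First translate so that $g>0$ everywhere; this is allowed because rescaling a Liouville cobordism's primitive by a constant does not change it up to isomorphism, and alternatively one can simply shrink $W_0$ by flowing backwards along $V_0$. Then the region $\{0\le t\le g(y)\}$ is an exact cobordism from $(Y_0,\beta_0)$ to $(Y_0,\beta_0')$. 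Glue $W_0$ to this region along $t=0$; the forms match there on the nose. Then glue $X$ along the graph, using the normal forms given by flowing along $\partial_t$ and along $V$ respectively. In each case the symplectic forms agree on the collars, so the result $W_1=W_0\cup(\text{slab})\cup X$ is a smooth compact symplectic manifold with $\partial W_1=Y_1$.

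Step three checks that $W_1$ is a strong filling of $(Y_1,\xi_1)$. Near $Y_1$ the form restricts to $d\lambda$ from $X$, and $V$ is an outward-pointing Liouville field whose induced contact structure is $\xi_1$. Hence $(Y_1,\xi_1)$ is strongly fillable. For the converse direction, apply the same argument to the reverse cobordism $(Y_1,\xi_1)\rightsquigarrow(Y_0,\xi_0)$ supplied by \cref{thm:bidirectional-intro}. Neither direction uses the invertibility or $h$-cobordism statement, and no exactness of the original symplectomorphism is needed. If $W_0$ is disconnected, the argument is unchanged, since the cobordism is attached along $Y_0$ only.
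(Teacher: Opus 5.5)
Your proposal is correct and matches the paper's proof: glue a strong filling of $(Y_0,\xi_0)$ to the Liouville cobordism $W_{01}$ from \cref{thm:bidirectional-intro}, and use $W_{10}$ for the converse. Your symplectization slab $\{0\le t\le g\}$ just spells out the ``standard collar adjustment'' that the paper cites. One small caution: make $g>0$ by rescaling (replace $\lambda$ by $K\lambda$ with $K\gg0$, or $\omega_0$ by a small positive multiple), not by flowing backwards along $V_0$. Since $V_0$ is only defined on a collar of possibly short length, its backward flow need not exist for time $\max(-g)$.
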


Floer theory supplies the second principal consequence.  Let $\widecheck c_{\mathrm{HM}}(\xi;\F)$ denote the monopole Floer contact invariant.  The bidirectional cobordisms and Echeverria's naturality theorem \cite{Echeverria2020} imply the following.

\begin{corollary}[Monopole contact invariant]\label{cor:HM-intro}
If two closed positive cooriented contact three-manifolds have symplectomorphic symplectizations, then
\[
  \widecheck c_{\mathrm{HM}}(\xi_0;\F)=0
  \quad\Longleftrightarrow\quad
  \widecheck c_{\mathrm{HM}}(\xi_1;\F)=0.
\]
\end{corollary}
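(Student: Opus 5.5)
The plan is to derive \cref{cor:HM-intro} from \cref{thm:bidirectional-intro} together with Echeverria's naturality of the monopole contact invariant under exact (Liouville) cobordisms \cite{Echeverria2020}. The statement is symmetric in the indices, so it suffices to prove one implication: if $\widecheck c_{\mathrm{HM}}(\xi_0;\F)=0$, then $\widecheck c_{\mathrm{HM}}(\xi_1;\F)=0$. The other implication follows by exchanging the roles of $0$ and $1$, using the reverse cobordism from \cref{thm:bidirectional-intro}.

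First, starting from the symplectomorphism $S(Y_0,\xi_0)\congsymp S(Y_1,\xi_1)$, I invoke \cref{thm:bidirectional-intro} to obtain a Liouville cobordism $W\colon(Y_0,\xi_0)\rightsquigarrow(Y_1,\xi_1)$, with concave end $Y_0$ and convex end $Y_1$. Its underlying smooth cobordism is an $h$-cobordism, but that is not needed here.

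Second, I apply Echeverria's theorem in the following form. Let $(W,\lambda)$ be an exact symplectic cobordism from $(Y_-,\xi_-)$ to $(Y_+,\xi_+)$, and let $\widehat W^\dagger$ denote the reversed cobordism $Y_+\to Y_-$. Then the induced map on $\widecheck{HM}$ in the canonical $\SpinC$ structure satisfies
\[
  \widecheck{HM}(\widehat W^\dagger)\bigl(\widecheck c(\xi_+)\bigr)=\widecheck c(\xi_-).
\]
With $\F$-coefficients no sign or orientation ambiguity arises. Applying this to $W$ gives $\widecheck c(\xi_0)=F_W\bigl(\widecheck c(\xi_1)\bigr)$ for a linear map $F_W$. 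Hence $\widecheck c(\xi_1)=0$ forces $\widecheck c(\xi_0)=0$. So one cobordism pulls vanishing back from its convex end to its concave end. Combining the two cobordisms supplied by \cref{thm:bidirectional-intro}, vanishing transfers in both directions.

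The main obstacle is bookkeeping rather than new mathematics. The direction of functoriality must be right: the invariant is carried from the convex end to the concave end, so a single Liouville cobordism gives only one implication, and both cobordisms of \cref{thm:bidirectional-intro} are genuinely needed. The hypotheses of Echeverria's naturality theorem must also match what \cref{thm:bidirectional-intro} produces. Echeverria works with strong or exact cobordisms with contact-type boundary of the standard form, and the Liouville primitive constructed there is exact near both ends and restricts to positive contact forms for $\xi_0,\xi_1$. I would check explicitly that the primitive built in \cref{thm:bidirectional-intro} is a genuine Liouville form near the ends, so that the theorem applies with any coefficient ring, in particular $\F$.
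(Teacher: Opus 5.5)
Your proposal is correct and is essentially the paper's proof: the Liouville cobordism $(Y_0,\xi_0)\rightsquigarrow(Y_1,\xi_1)$ from \cref{thm:bidirectional-intro} together with Echeverria's naturality \eqref{eq:echeverria} gives $\widecheck c_{\mathrm{HM}}(\xi_1;\F)=0\Rightarrow\widecheck c_{\mathrm{HM}}(\xi_0;\F)=0$, and the reverse cobordism gives the converse. The only slip is cosmetic: the implication you announce in your first paragraph is supplied by the reverse cobordism rather than by $W$, which is harmless since you use both; exactness is also more than you need, because Echeverria's theorem covers all strong symplectic cobordisms.
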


The proof of \cref{thm:main} separates the factors in $\mathscr U$ into two types.  The $L$-space families $\mathscr S$ and $\mathscr W$ are controlled by the Heegaard Floer contact invariant: Matkovi\v{c} and Min--Nonino show that it detects tightness and distinguishes the relevant tight structures \cite{Matkovic2018,MinNonino2026}.  The remaining class $\mathscr F$ has at most one tight contactomorphism class on each underlying manifold, and every such tight structure is strongly fillable; this follows from the standard result for $S^1\times S^2$, Conway--Min's classification of figure-eight surgeries, and Cavallo's classification of the Brieskorn families \cite{ConwayMin2020,Cavallo2026}.

The Taubes--Colin--Ghiggini--Honda comparison identifies the plus Heegaard Floer contact invariant with the monopole Floer contact invariant \cite{TaubesECHSWV,ColinGhigginiHondaIII}.  Colin's decomposition theorem, the Heegaard Floer K\"unneth theorem, and the connected-sum formula for contact invariants propagate the $L$-space information across connected sums \cite{Colin1997,DingGeiges2007,OzsvathSzabo2004Properties,OzsvathSzabo2005}.  Strongly fillable factors are then added by a strong symplectic cobordism built from a filling and Weinstein one-handles; Echeverria's naturality propagates nonvanishing of the monopole contact invariant through this cobordism.  Together with formal rigidity, these ingredients prove the main theorem.

\subsection*{Organization of the paper}
Section~\ref{sec:preliminaries} records the conventions and the results from \cite{Iida2026Overtwisted} used here.  Section~\ref{sec:liouville} proves \cref{thm:bidirectional-intro}.  Section~\ref{sec:floer} derives the monopole and Heegaard Floer contact-invariant consequences.  Section~\ref{sec:criterion} develops the classwise criterion, the connected-sum formulas, and the mixed connected-sum argument.  Section~\ref{sec:strong} proves invariance of strong fillability, establishes a strong-fillability criterion, and verifies the required properties for $S^1\times S^2$, the figure-eight surgeries, and the Brieskorn families.  Section~\ref{sec:seifert} records Matkovi\v{c}'s classification for small Seifert $L$-spaces.  Section~\ref{sec:whitehead} records the Whitehead-link results of Liu and Min--Nonino and completes the proof of \cref{thm:main,cor:weeks,cor:hyperbolic}.

\section{Preliminaries}\label{sec:preliminaries}

All manifolds are smooth, Hausdorff, and second countable.  Contact structures are positive and cooriented, and contactomorphisms preserve the coorientation.  The symplectic orientation on a symplectization agrees with the product orientation because
\[
  \frac{1}{2}d(e^t\alpha)^2
  =e^{2t}\,dt\wedge\alpha\wedge d\alpha.
\]

\subsection{Strict lifts of contactomorphisms}

\begin{lemma}[Strict lift]\label{lem:strict-lift}
Let
\[
  f\colon(Y_0,\xi_0)\longrightarrow(Y_1,\xi_1)
\]
be a coorientation-preserving contactomorphism.  Choose positive contact forms $\alpha_i$ defining $\xi_i$, and write
\[
  f^*\alpha_1=e^g\alpha_0
\]
for a smooth function $g\colon Y_0\to\R$.  Then
\[
  \widetilde f(t,y)=(t-g(y),f(y))
\]
is a strict exact symplectomorphism satisfying
\[
  \widetilde f^*(e^t\alpha_1)=e^t\alpha_0.
\]
\end{lemma}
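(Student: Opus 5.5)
The plan is a direct computation with three checks: that $\widetilde f$ pulls back the Liouville form correctly, that it is a diffeomorphism, and that it preserves the product orientation. First, I record why $g$ exists. Since $f$ is a coorientation-preserving contactomorphism, $f^*\alpha_1$ is a positive contact form defining $\xi_0$, so $f^*\alpha_1=h\alpha_0$ for a smooth positive function $h$. Set $g=\log h$. Coorientation preservation is exactly what guarantees $h>0$, so that the logarithm is defined.

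Next I compute the pullback. Write $\widetilde f=(\tau,F)$ with $\tau(t,y)=t-g(y)$ and $F(t,y)=f(y)$. Then $F^*\alpha_1=\mathrm{pr}_Y^*f^*\alpha_1=e^{g}\alpha_0$, where $\alpha_0$ and $g$ are pulled back to $\R\times Y_0$. Hence
\[
  \widetilde f^*(e^t\alpha_1)=e^{\tau}F^*\alpha_1=e^{t-g}e^{g}\alpha_0=e^t\alpha_0 .
\]
The key point is that $e^t\alpha_1$ involves no $dt$ term, so the $dt$ and $dg$ contributions from $d\tau$ never appear. Taking $d$ of both sides shows that $\widetilde f$ also pulls back $\omega_{\alpha_1}$ to $\omega_{\alpha_0}$.

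Finally I check that $\widetilde f$ is a diffeomorphism, and this is the only non-formal point. The inverse is explicit:
\[
  (s,z)\longmapsto\bigl(s+g(f^{-1}(z)),\,f^{-1}(z)\bigr),
\]
which is smooth and composes to the identity in both orders. Orientation compatibility then follows automatically. The map preserves $\omega$, hence the symplectic volume forms, and by the preliminary remark these agree with the product orientations on both sides. So $\widetilde f$ is a strict exact symplectomorphism, meaning it intertwines the primitives exactly, with no exact correction term. I do not expect any step to be a real obstacle; the only care needed is the sign convention in $t-g(y)$, which the computation above confirms.
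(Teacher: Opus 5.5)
Your proposal is correct and follows the paper's proof, which is exactly the computation $\widetilde f^*(e^t\alpha_1)=e^{t-g(y)}f^*\alpha_1=e^t\alpha_0$ followed by taking exterior derivatives. Your extra checks are points the paper leaves implicit, and they are correct: positivity of the conformal factor from coorientation preservation, and the explicit smooth inverse $(s,z)\mapsto(s+g(f^{-1}(z)),f^{-1}(z))$.
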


\begin{proof}
This follows from the direct calculation
\[
  \widetilde f^*(e^t\alpha_1)
  =e^{t-g(y)}f^*\alpha_1
  =e^t\alpha_0.
\]
Taking exterior derivatives proves the symplectic assertion.
\end{proof}

\subsection{Formal rigidity and control of the ends}

We recall three results from \cite{Iida2026Overtwisted}.  For a closed connected manifold $Y$, write
\[
  E_-^Y(T)=(-\infty,-T]\times Y,
  \qquad
  E_+^Y(T)=[T,\infty)\times Y.
\]
A proper map $F\colon\R\times Y_0\to\R\times Y_1$ is \emph{end-order-preserving} if, for every $R>0$, there exists $T>0$ such that
\[
  F(E_-^{Y_0}(T))\subset E_-^{Y_1}(R),
  \qquad
  F(E_+^{Y_0}(T))\subset E_+^{Y_1}(R).
\]

\begin{theorem}[Formal symplectization rigidity \cite{Iida2026Overtwisted}]\label{thm:formal-rigidity}
Let $(Y_i,\xi_i)$, $i=0,1$, be closed connected positive cooriented contact three-manifolds.  If
\[
  \Phi\colon S(Y_0,\xi_0)\longrightarrow S(Y_1,\xi_1)
\]
is a symplectomorphism, then there is an orientation-preserving diffeomorphism
\[
  f\colon Y_0\longrightarrow Y_1
\]
such that $\xi_0$ and $f^*\xi_1$ are homotopic as oriented plane fields.
\end{theorem}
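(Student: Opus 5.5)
The strategy is to extract from $\Phi$ a diffeomorphism $f\colon Y_0\to Y_1$ by looking at the positive end, and then to compare the plane fields through the ends.

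\textbf{Step 1: ends.} First I would show that $\Phi$, or $\Phi$ composed with a reflection-type correction, is proper and end-order-preserving in the sense just defined. Properness is automatic for a diffeomorphism. The two ends of $\R\times Y_i$ are distinguished symplectically. The negative end has finite volume for $\omega_\alpha^2$, because $\int_{E_-(T)}e^{2t}\,dt\wedge\alpha\wedge d\alpha<\infty$. The positive end has infinite volume, and $\Phi$ preserves volume. A diffeomorphism of $\R\times Y$ permutes the two ends, so volume forces $\Phi$ to send the positive end to the positive end and the negative end to the negative end.

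\textbf{Step 2: the diffeomorphism $f$.} For $R$ large, choose $T'$ so large that $\Phi(\{T\}\times Y_0)$ lies in the region $(R,T')\times Y_1$. The region of $\R\times Y_1$ between $\{R\}\times Y_1$ and $\Phi(\{T\}\times Y_0)$ is then a compact cobordism. Its complement structure shows that it is an $h$-cobordism: the inclusions into $\R\times Y_1\simeq Y_1$ are homotopy equivalences. The first tool I would try is the identification of $Y_0$ with $Y_1$ through the homotopy equivalence
\[
\pi_1\circ\Phi|_{\{T\}\times Y_0}\colon Y_0\to Y_1 .
\]
To get an actual diffeomorphism, I would invoke three-dimensional topology: geometrization implies that homotopy-equivalent closed $3$-manifolds linked by an $h$-cobordism are diffeomorphic, since $h$-cobordant $3$-manifolds are diffeomorphic by Perelman together with the known $s$-cobordism results in dimension $4$ for the relevant groups. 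The cleanest route uses that a homotopy equivalence of closed orientable irreducible $3$-manifolds is homotopic to a diffeomorphism, except for lens spaces. Lens spaces are treated through the Reidemeister-torsion invariance of $h$-cobordisms, and prime decompositions are handled factorwise. Orientation is preserved because $\Phi$ preserves the symplectic orientation $=$ the product orientation, and it also preserves the end ordering.

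\textbf{Step 3: plane fields.} On $\R\times Y$ the symplectic form determines an almost complex structure $J$, unique up to homotopy. With respect to it, $TY\oplus\R$ splits as $\xi\oplus\langle\partial_t,R_\alpha\rangle$, so the complex line bundle $\xi$ is the $J$-complement of a trivial complex line. The homotopy class of $\xi$ as an oriented plane field on $Y$ corresponds to the stable class of the almost complex structure on $\R\times Y$ relative to the trivialization of the normal direction. The $\Phi^*J_1$ is homotopic to $J_0$, and $\Phi$ is isotopic, as a map of ends, to $\mathrm{id}_\R\times f$. Both facts together give the first obstruction ($\mathrm{Spin}^c$ structure / Euler class) as equal. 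The secondary $d_3$-invariant can be recomputed as a relative Hopf-type invariant on a compact piece, where $\Phi^*J_1$ and $J_0$ agree up to homotopy, so $f^*\xi_1\simeq\xi_0$.

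\textbf{Main obstacle.} The main obstacle is Step 2: producing an honest diffeomorphism homotopic to the slice map and compatible enough with $\Phi$ to carry out the almost-complex comparison in Step 3. I expect the proof to rely on the cited results rather than reproving them.
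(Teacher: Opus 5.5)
Your Steps~1 and~2 follow what the paper records about this imported theorem. End control comes from comparing the finite volume of negative half-symplectizations with the infinite volume of positive ones. The region between $\{R\}\times Y_1$ and $\Phi(\{T\}\times Y_0)$ is the cut of \cref{prop:smooth-cut-input}, which the paper, citing Hausmann--Jahren, records as an invertible cobordism and hence an $h$-cobordism. Your justification for then getting a diffeomorphism is muddled. There is no general four-dimensional $s$-cobordism theorem for cobordisms between $3$-manifolds, and a product structure is not what is needed; the relevant input is a classification statement for $h$-cobordant (or invertibly cobordant) closed $3$-manifolds. Your ``homotopic to a diffeomorphism'' route is only asserted for irreducible non-lens manifolds, and lens spaces and connected sums get vague remarks. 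The genuine gap, however, is in Step~3.

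\textbf{The gap in Step~3.}
\begin{itemize}
\item Step~3 rests on the claim that $\Phi$ is isotopic on the ends to $\mathrm{id}_\R\times f$, and nothing you have established gives this. The claim is stronger than anything Step~2 produces. Restricting such an isotopy to $\{T\}\times Y_0$ and applying isotopy extension would carry $\Sigma_T=\Phi(\{T\}\times Y_0)$ onto a slice, so the cobordism cut out by $\Phi$ would be a product. You could then read off $f$ without any geometrization.
\item As written, $f$ is an abstract diffeomorphism supplied by $3$-manifold classification, with no relation to $d\Phi$.
\item Even a homotopy $\Phi\simeq\mathrm{id}_\R\times f$ would not suffice. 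The bijection between homotopy classes of almost complex structures on $\R\times Y$ and oriented plane fields on $Y$ is defined through the splitting $\R\partial_t\oplus TY$. Concluding $\Phi^*J_1\simeq(\mathrm{id}_\R\times f)^*J_1$ therefore requires the two derivatives to be homotopic as bundle isomorphisms, and a homotopy of maps does not give that.
\end{itemize}

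Concretely, here is what $\Phi$ actually yields. Since $\Phi_*J_0$ and $J_1$ are both $\omega_1$-compatible, $\xi_0$ is homotopic to $j_T^*\bigl(T\Sigma_T\cap J_1T\Sigma_T\bigr)$, where $j_T(y)=\Phi(T,y)$; this is the field of complex tangencies of $J_1$ along $\Sigma_T$. By contrast, $f^*\xi_1$ is the complex tangency field of $J_1$ along $\{r\}\times Y_1$, pulled back by $f$. These two hypersurfaces cobound an $h$-cobordism that is not known to be a product. You supply no isotopy, or even regular homotopy, between the embeddings $j_T$ and $y\mapsto(r,f(y))$.

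Your $d_3$ remark also covers only torsion $c_1(\xi)$. The theorem is stated for all closed $3$-manifolds, and when $c_1$ is non-torsion the three-dimensional obstruction lives in a finite cyclic group, with no rational $d_3$.

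\textbf{What is missing.} You need a comparison of $\xi_0$ with $f^*\xi_1$ that uses only the data $\Phi$ supplies. One such source is the $\omega_1$-compatible almost complex structure on the cut-out cobordism, which induces $\xi_1$ and $\xi_0$ on its two ends. You also need an argument that the chosen $f$ is compatible with the identifications this cobordism induces, both of $\SpinC$ structures and of the three-dimensional invariant.
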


\begin{proposition}[End control \cite{Iida2026Overtwisted}]\label{prop:end-control}
Every symplectomorphism between symplectizations of closed connected contact three-manifolds is end-order-preserving.  If an end-order-preserving diffeomorphism is written as
\[
  F(t,y)=(a(t,y),b(t,y)),
\]
then
\begin{equation}\label{eq:uniform-divergence}
  \lim_{t\to+\infty}\inf_{y\in Y_0}a(t,y)=+\infty,
  \qquad
  \lim_{t\to-\infty}\sup_{y\in Y_0}a(t,y)=-\infty.
\end{equation}
The same assertions hold for $F^{-1}$.
\end{proposition}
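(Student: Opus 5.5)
The plan is to use two ingredients: a topological argument about the two ends of $\R\times Y$, and a volume argument that tells the two ends apart symplectically. The uniform divergence \eqref{eq:uniform-divergence} will then follow directly from the definition.

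\emph{Step 1: each end goes to one end.} Let $\Phi\colon S(Y_0,\xi_0)\to S(Y_1,\xi_1)$ be a symplectomorphism; being a diffeomorphism, it is proper. Fix $R>0$. The set
\[
  K=\Phi^{-1}\bigl([-R-1,R+1]\times Y_1\bigr)
\]
is compact, so $K\subset(-T,T)\times Y_0$ for some $T>0$. Each of $E_\pm^{Y_0}(T)$ is connected and disjoint from $K$. Hence its image is a connected subset of $\R\times Y_1\setminus[-R-1,R+1]\times Y_1$. That set has exactly two components, and each lies inside $E_+^{Y_1}(R)$ or $E_-^{Y_1}(R)$. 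So each image lies entirely in one of these two ends.

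\emph{Step 2: the ends are not swapped or merged.} For $\omega_\alpha=d(e^t\alpha)$ we have $\tfrac12\omega_\alpha^2=e^{2t}dt\wedge\alpha\wedge d\alpha$. So every negative end $E_-(R)$ has finite symplectic volume and every positive end $E_+(T)$ has infinite volume. A symplectomorphism preserves $\omega^2$, so $\Phi(E_+^{Y_0}(T))$ cannot lie in the finite-volume set $E_-^{Y_1}(R)$; it therefore lies in $E_+^{Y_1}(R)$.

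Now suppose $\Phi(E_-^{Y_0}(T))$ also lay in $E_+^{Y_1}(R)$. Then the noncompact set $E_-^{Y_1}(R+1)$ would be contained in $\Phi([-T,T]\times Y_0)$ by surjectivity. That is impossible, since $\Phi([-T,T]\times Y_0)$ is compact. Hence $\Phi(E_-^{Y_0}(T))\subset E_-^{Y_1}(R)$, and $\Phi$ is end-order-preserving.

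\emph{Step 3: uniform divergence and the inverse.} The estimate \eqref{eq:uniform-divergence} is a restatement of the definition. Given $R$, choose $T$ as above; then $t\ge T$ gives $a(t,y)\ge R$ for every $y$, and $t\le -T$ gives $a(t,y)\le -R$ for every $y$.

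For $F^{-1}$, where $F$ is a general end-order-preserving diffeomorphism, rerun Step 1 with the roles of source and target exchanged. This shows that $F^{-1}$ maps each sufficiently deep end into one end. The end-order-preservation of $F$ then forces the correct matching. If, for instance, $F^{-1}(E_+^{Y_1}(T'))$ met $E_-^{Y_0}(R')$ deep enough, applying $F$ would place points of $E_+^{Y_1}(T')$ inside $E_-^{Y_1}$, a contradiction once $R'$ is large.

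When $F=\Phi$ is a symplectomorphism, one can instead apply the first part directly to the symplectomorphism $\Phi^{-1}$.

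The main obstacle is Step 2, distinguishing the two ends, since no purely topological argument can rule out a swap. The finite/infinite volume asymmetry of the symplectization is exactly what resolves it.
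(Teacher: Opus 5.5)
Your proof is correct and follows the argument the paper attributes to \cite{Iida2026Overtwisted}. Properness and connectedness of the slices send each deep end into a single end. The finite volume of a negative half-symplectization versus the infinite volume of a positive one prevents the positive end from landing in the negative end, and compactness with surjectivity excludes both ends landing in the same end. The uniform divergence \eqref{eq:uniform-divergence}, and its version for $F^{-1}$, then just unpack the definition.
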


The proof distinguishes the two ends by symplectic volume: a negative half-symplectization has finite volume, whereas every positive half-symplectization has infinite volume.  We shall also use the following smooth-topological conclusion.  The statement is \cite[Theorem~5.2 and Proposition~5.3]{Iida2026Overtwisted}, based on Hausmann--Jahren's correspondence between open-product diffeomorphisms and invertible cobordisms \cite{HausmannJahren2018}.

\begin{proposition}[Smooth cuts are invertible]\label{prop:smooth-cut-input}
Let $F\colon\R\times Y_0\to\R\times Y_1$ be an orientation-preserving end-order-preserving diffeomorphism.  Suppose that $T,r\in\R$ satisfy
\begin{equation}\label{eq:smooth-cut-hypotheses}
  F(\{T\}\times Y_0)\subset(r,\infty)\times Y_1,
  \qquad
  (-\infty,r]\times Y_1\subset F(( -\infty,T)\times Y_0).
\end{equation}
Then
\[
  F(( -\infty,T]\times Y_0)\cap([r,\infty)\times Y_1)
\]
is a compact smooth cobordism from $\{r\}\times Y_1$ to $F(\{T\}\times Y_0)$.  With the boundary parametrizations induced by the two standard slices, it is an invertible smooth cobordism from $Y_1$ to $Y_0$.  In particular, it is an $h$-cobordism.
\end{proposition}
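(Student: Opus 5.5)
The plan is to identify the region
\[
  W=F((-\infty,T]\times Y_0)\cap([r,\infty)\times Y_1)
\]
as a compact manifold with two boundary components. I then show that $W$ has both a left and a right inverse, by gluing it to complementary regions cut out by further slices, so that the composites are diffeomorphic to products.

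\textbf{Step 1: $W$ is a compact cobordism.}
\begin{itemize}[leftmargin=*]
\item \emph{Compactness.} Since $F$ is end-order-preserving, there is $T_1<T$ with
\[
  F((-\infty,T_1]\times Y_0)\subset(-\infty,r)\times Y_1 .
\]
Hence $F^{-1}(W)\subset[T_1,T]\times Y_0$, which is compact, and $W$ is the closed subset $F(F^{-1}(W))$ of this compact image.
\item \emph{Boundary.} By \eqref{eq:smooth-cut-hypotheses}, the hypersurface $\Sigma:=F(\{T\}\times Y_0)$ lies in the open set $(r,\infty)\times Y_1$. Also $\{r\}\times Y_1$ lies in the open set $F((-\infty,T)\times Y_0)$. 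So the two candidate boundary pieces are disjoint closed hypersurfaces.
\item \emph{Manifold structure.} Near $\Sigma$, the set $W$ is locally $F$ of a half-space, and near $\{r\}\times Y_1$ it is the standard half-space $\{t\ge r\}$. Elsewhere it is open in $\R\times Y_1$.
\end{itemize}
Thus $W$ is a compact smooth manifold with $\partial W=\{r\}\times Y_1\sqcup\Sigma$. The orientation induced from $\R\times Y_1$, together with the orientation-preserving hypothesis on $F$, makes $\{r\}\times Y_1$ the incoming end and $\Sigma\cong_F Y_0$ the outgoing end.

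\textbf{Step 2: the two complementary pieces.}
\begin{itemize}[leftmargin=*]
\item \emph{A right complement.} Set $V=F([T_1,T]\times Y_0)\cap((-\infty,r]\times Y_1)$. Equivalently, $F^{-1}(V)$ is the region of $\R\times Y_0$ between $\{T_1\}\times Y_0$ and $F^{-1}(\{r\}\times Y_1)$. Applying Step 1 to $F^{-1}$, which is also end-order-preserving, shows that $V$ is a compact cobordism from $Y_0$ to $Y_1$.
\item \emph{The first composite.} By construction $V\cup_{\{r\}\times Y_1}W=F([T_1,T]\times Y_0)$, which is diffeomorphic to $[T_1,T]\times Y_0$ with the standard end parametrizations. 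So $V\circ W$ is trivial.
\item \emph{A left complement.} Choose $r_2>r$ with $\Sigma\subset(-\infty,r_2)\times Y_1$; this is possible because $\Sigma$ is compact. Let $U$ be the region between $\Sigma$ and $\{r_2\}\times Y_1$.
\item \emph{The second composite.} Then $W\cup_\Sigma U=[r,r_2]\times Y_1$. So the other composite is also a product.
\end{itemize}

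\textbf{Step 3: conclude.} $W$ has a left and a right inverse in the cobordism category, so the standard argument shows that $V$ and $U$ are both two-sided inverses of $W$, and $W$ is invertible. Invertibility gives the $h$-cobordism property. For example, the inclusion $\{r\}\times Y_1\hookrightarrow W$ followed by $W\hookrightarrow W\cup U\simeq\{r\}\times Y_1$ is a homotopy equivalence. Similarly, using $V\cup W$, one gets a left homotopy inverse for the inclusion of the other end, so each boundary inclusion is a homotopy equivalence. This is exactly the Hausmann--Jahren mechanism cited in the paper.

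\textbf{Main obstacle.} I expect the main difficulty to be bookkeeping rather than ideas. One point is checking that the complementary regions $U$ and $V$ are honest smooth cobordisms; this requires choosing the auxiliary slices $T_1$ and $r_2$ so that the analogue of \eqref{eq:smooth-cut-hypotheses} holds for $F^{-1}$. The other point is keeping the boundary parametrizations and orientations consistent, so that the glued composites are products relative to the standard slice parametrizations, not merely abstractly diffeomorphic to products.
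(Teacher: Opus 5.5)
Your proof that $W$ is a compact cobordism and that it is invertible is correct, and it takes a different route from the paper. The paper does not prove this statement. It imports it from \cite[Theorem~5.2 and Proposition~5.3]{Iida2026Overtwisted}, that is, from the Hausmann--Jahren correspondence between open-product diffeomorphisms and invertible cobordisms. You instead construct the inverses by hand, cutting with two auxiliary slices. Since $V\cup W=F([T_1,T]\times Y_0)$ and $W\cup U=[r,r_2]\times Y_1$ are products compatibly with the standard markings, $W$ has a left and a right inverse and is therefore invertible. Your route is elementary and makes the marking bookkeeping transparent. The citation is shorter and places the statement in a general theory that also delivers the $h$-cobordism conclusion.

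One small point needs a word of justification. Choosing $r_2$ only so that $\Sigma\subset(-\infty,r_2)\times Y_1$ does not by itself show $\{r_2\}\times Y_1\subset F((T,\infty)\times Y_0)$. The simplest fix is to choose $r_2$ with $W\subset(-\infty,r_2)\times Y_1$, which compactness of $W$ allows.

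The genuine gap is the $h$-cobordism deduction in Step~3. The composites $W\cup U$ and $V\cup W$ give only \emph{left} homotopy inverses for the two boundary inclusions, and that does not make them homotopy equivalences. For a closed $3$-manifold $M$, the manifold $(M\times[0,1])\mathbin{\#}(S^2\times S^2)$ retracts onto each boundary component: send the punctured $S^2\times S^2$ into the removed $4$-ball by a map fixing the gluing sphere, then project to $M$. Yet its second homology is $H_2(M)\oplus\Z^2$, so it is not an $h$-cobordism. What is missing is the swindle behind ``invertible implies $h$-cobordism.''

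In your setting the swindle is easiest with half-infinite complements instead of compact ones.
\begin{itemize}
\item We have $F((-\infty,T]\times Y_0)=((-\infty,r]\times Y_1)\cup W$, the two pieces meeting in $\{r\}\times Y_1$, and $(-\infty,r]\times Y_1$ deformation retracts onto that slice. Hence $W\hookrightarrow F((-\infty,T]\times Y_0)$ is a homotopy equivalence. Since $\Sigma\hookrightarrow F((-\infty,T]\times Y_0)$ is also one, so is $\Sigma\hookrightarrow W$.
\item Symmetrically, $[r,\infty)\times Y_1=W\cup F([T,\infty)\times Y_0)$, the two pieces meeting in $\Sigma$, and $F([T,\infty)\times Y_0)$ deformation retracts onto $\Sigma$. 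Hence $W\hookrightarrow[r,\infty)\times Y_1$ is a homotopy equivalence, and therefore so is $\{r\}\times Y_1\hookrightarrow W$.
\end{itemize}
Alternatively, invoke the standard fact that invertible cobordisms are $h$-cobordisms, for example by regrouping $W\cup U\cup W\cup U\cup\cdots$ in two ways. This is what the paper's appeal to Hausmann--Jahren supplies.
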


\subsection{Liouville cobordisms}

\begin{definition}\label{def:liouville-cobordism}
A \emph{Liouville cobordism} from $(Y_-,\xi_-)$ to $(Y_+,\xi_+)$ consists of a compact oriented cobordism $W$ with
\[
  \partial W=-\partial_-W\sqcup\partial_+W,
\]
boundary parametrizations
\[
  j_-\colon Y_-\longrightarrow\partial_-W,
  \qquad
  j_+\colon Y_+\longrightarrow\partial_+W,
\]
and a one-form $\Lambda\in\Omega^1(W)$ such that $d\Lambda$ is symplectic, the pullbacks $j_-^*\Lambda$ and $j_+^*\Lambda$ are positive contact forms for $\xi_-$ and $\xi_+$, respectively, and the Liouville vector field $Z$, defined by
\[
  \iota_Zd\Lambda=\Lambda,
\]
points inward along $\partial_-W$ and outward along $\partial_+W$.
\end{definition}

A \emph{Liouville $h$-cobordism} is a Liouville cobordism whose underlying smooth cobordism is an $h$-cobordism.

\section{Liouville cobordisms cut out by a symplectomorphism}\label{sec:liouville}

Let
\[
  \Phi\colon
  \bigl(\R\times Y_0,d(e^t\alpha_0)\bigr)
  \longrightarrow
  \bigl(\R\times Y_1,d(e^t\alpha_1)\bigr)
\]
be a symplectomorphism, where $\alpha_i$ is a positive contact form for $\xi_i$.  Set
\[
  V_i=\R\times Y_i,
  \qquad
  \lambda_i=e^t\alpha_i,
  \qquad
  \omega_i=d\lambda_i.
\]

\subsection{The closed one-form discrepancy}

On $V_1$, define the transported primitive
\begin{equation}\label{eq:mu}
  \mu=(\Phi^{-1})^*\lambda_0.
\end{equation}
Since $d\mu=\omega_1=d\lambda_1$, the one-form
\[
  \delta=\mu-\lambda_1
\]
is closed.

\begin{lemma}[Product decomposition of the discrepancy]\label{lem:discrepancy}
Let $\pi\colon\R\times Y_1\to Y_1$ be projection.  There are a closed one-form $\beta\in\Omega^1(Y_1)$ and a smooth function $G\colon V_1\to\R$ such that
\begin{equation}\label{eq:discrepancy-decomposition}
  \mu-\lambda_1=\pi^*\beta+dG.
\end{equation}
\end{lemma}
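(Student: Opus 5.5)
The plan is to use that the inclusion $\iota_0\colon Y_1\to V_1$, $y\mapsto(0,y)$, and the projection $\pi$ are mutually inverse homotopy equivalences. Since $\pi\circ\iota_0=\mathrm{id}_{Y_1}$ and $\iota_0\circ\pi$ is homotopic to $\mathrm{id}_{V_1}$ via $(s,(t,y))\mapsto(st,y)$, the map $\pi^*$ induces an isomorphism on de Rham cohomology. Its inverse is $\iota_0^*$.

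First, set $\delta=\mu-\lambda_1$, which is closed by the remark preceding the lemma. Then put
\[
  \beta=\iota_0^*\delta\in\Omega^1(Y_1),
\]
which is closed because pullback commutes with $d$. Next, observe that $\delta$ and $\pi^*\beta$ define the same class in $H^1_{\mathrm{dR}}(V_1)$. Indeed,
\[
  [\pi^*\beta]=\pi^*\iota_0^*[\delta]=(\iota_0\circ\pi)^*[\delta]=[\delta],
\]
using homotopy invariance of de Rham cohomology. Hence $\delta-\pi^*\beta=dG$ for some smooth $G$, which is \eqref{eq:discrepancy-decomposition}.

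To make $G$ concrete, which will likely be useful later, I would write it as a line integral along the $\R$-direction:
\[
  G(t,y)=\int_0^t\delta_{(s,y)}(\partial_s)\,ds.
\]
I would then check $dG=\delta-\pi^*\beta$ directly. The $\partial_t$-component holds by the fundamental theorem of calculus, since $\pi^*\beta$ has no $dt$-component. For a vector $v$ tangent to $Y_1$, differentiate under the integral and use closedness $d\delta(\partial_s,v)=0$. This gives $\partial_s(\delta(v))=v(\delta(\partial_s))$ for the extension of $v$ constant in $s$, so $dG(v)=\delta_{(t,y)}(v)-\delta_{(0,y)}(v)=\delta(v)-\beta(v)$.

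There is no real obstacle here. The only point needing care is that $V_1$ is noncompact, so one should not appeal to any compactness; the homotopy-invariance argument and the explicit integral formula both work on the open cylinder without issue. Finally, I would record that $\beta$ is fixed on $Y_1$, so its $C^0$ norm with respect to $\alpha_1$ is finite. This is the quantity that will later be dominated by $e^r\alpha_1$ on high slices.
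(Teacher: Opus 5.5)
Your proof is correct and matches the paper's argument: both take $\beta$ to be the pullback of $\delta=\mu-\lambda_1$ to the zero slice and use $\iota_0\circ\pi\simeq\mathrm{id}_{V_1}$ to conclude that $\delta-\pi^*\beta$ is exact. Your explicit line-integral formula for $G$ is a valid addition, but the paper does not need it.
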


\begin{proof}
Let $i\colon Y_1\hookrightarrow\R\times Y_1$ be the zero slice and set $\beta=i^*\delta$.  Then $\beta$ is closed.  Since $i\circ\pi$ is homotopic to the identity, $\delta$ and $\pi^*\beta$ represent the same de Rham cohomology class.  Their difference is therefore exact.
\end{proof}

\begin{lemma}[High-slice contact domination]\label{lem:high-slice}
For all sufficiently large $r$, the one-forms
\begin{equation}\label{eq:theta}
  \theta_{r,s}=e^r\alpha_1+s\beta,
  \qquad 0\le s\le1,
\end{equation}
are positive contact forms on $Y_1$.  Moreover,
\begin{equation}\label{eq:positive-radial-component}
  1+e^{-r}\beta(R_{\alpha_1})>0
\end{equation}
pointwise on $Y_1$.
\end{lemma}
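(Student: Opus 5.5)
The plan is a direct computation of $\theta\wedge d\theta$ for $\theta=\theta_{r,s}$, followed by a compactness argument on $Y_1$. Since $\beta$ is closed, $d\theta_{r,s}=e^r\,d\alpha_1$. Therefore
\[
  \theta_{r,s}\wedge d\theta_{r,s}
  =e^{2r}\alpha_1\wedge d\alpha_1+s\,e^{r}\,\beta\wedge d\alpha_1 .
\]
It suffices to compare $\beta\wedge d\alpha_1$ with the positive volume form $\alpha_1\wedge d\alpha_1$.

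The first step is to use the splitting $TY_1=\langle R_{\alpha_1}\rangle\oplus\xi_1$. The restriction $d\alpha_1|_{\xi_1}$ is nondegenerate, and $\iota_{R_{\alpha_1}}d\alpha_1=0$. Write
\[
  \beta=\beta(R_{\alpha_1})\,\alpha_1+\beta',
  \qquad \beta'(R_{\alpha_1})=0 .
\]
The $3$-form $\beta'\wedge d\alpha_1$ vanishes, since evaluating it on $R_{\alpha_1}$ together with two vectors of $\xi_1$ gives zero. Hence
\[
  \beta\wedge d\alpha_1=\beta(R_{\alpha_1})\,\alpha_1\wedge d\alpha_1 ,
\]
and so
\[
  \theta_{r,s}\wedge d\theta_{r,s}
  =e^{2r}\bigl(1+s\,e^{-r}\beta(R_{\alpha_1})\bigr)\,\alpha_1\wedge d\alpha_1 .
\]

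The second step is compactness. Because $Y_1$ is closed, $C=\sup_{Y_1}|\beta(R_{\alpha_1})|$ is finite. Choose $r$ with $e^{-r}C<1$. Then for every $s\in[0,1]$ the factor $1+s\,e^{-r}\beta(R_{\alpha_1})$ is at least $1-e^{-r}C>0$ pointwise. Thus each $\theta_{r,s}$ is a positive contact form, and taking $s=1$ gives \eqref{eq:positive-radial-component}.

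Nothing here is really an obstacle. The only point needing care is the vanishing of $\beta'\wedge d\alpha_1$, which follows because $R_{\alpha_1}$ spans the kernel of $d\alpha_1$.
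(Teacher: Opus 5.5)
Your proof is correct and follows the paper's route: expand $\theta_{r,s}\wedge d\theta_{r,s}$ using $d\beta=0$, then use compactness of $Y_1$ to dominate the $\beta$-term for large $r$. Your pointwise identity $\beta\wedge d\alpha_1=\beta(R_{\alpha_1})\,\alpha_1\wedge d\alpha_1$ is a small sharpening the paper does not state: it makes the contact condition exactly $1+s\,e^{-r}\beta(R_{\alpha_1})>0$, so one bound $e^{-r}\sup_{Y_1}|\beta(R_{\alpha_1})|<1$ gives both assertions, whereas the paper proves them separately, by uniform convergence and by boundedness.
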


\begin{proof}
Because $d\beta=0$,
\[
  \theta_{r,s}\wedge d\theta_{r,s}
  =e^{2r}\alpha_1\wedge d\alpha_1
   +s e^r\beta\wedge d\alpha_1.
\]
After division by $e^{2r}$, the second term converges uniformly to zero as $r\to+\infty$, uniformly in $s\in[0,1]$, whereas $\alpha_1\wedge d\alpha_1$ is a positive volume form.  This proves the first assertion.  The second follows because $Y_1$ is compact, and hence the smooth
function $\beta(R_{\alpha_1})$ is bounded.
\end{proof}

\subsection{The compact smooth cut}

Write
\[
  \Phi(t,y)=(a(t,y),b(t,y)),
  \qquad
  \Phi^{-1}(s,z)=(\bar a(s,z),\bar b(s,z)).
\]

\begin{lemma}[Choice of the two boundary levels]\label{lem:choice-levels}
Fix $r\in\R$.  There are $T\in\R$ and $\varepsilon>0$ such that
\begin{align}
  \Sigma_T:=\Phi(\{T\}\times Y_0)
  &\subset(r+\varepsilon,\infty)\times Y_1,
  \label{eq:upper-above}\\
  (-\infty,r+\varepsilon]\times Y_1
  &\subset\Phi(( -\infty,T)\times Y_0).
  \label{eq:lower-contained}
\end{align}
The set
\begin{equation}\label{eq:W10}
  W_{10}
  =\Phi(( -\infty,T]\times Y_0)
   \cap([r,\infty)\times Y_1)
\end{equation}
is a compact smooth cobordism from the standard slice $\{r\}\times Y_1$ to $\Sigma_T$.  With the natural boundary parametrizations, its underlying smooth cobordism from $Y_1$ to $Y_0$ is invertible and hence an $h$-cobordism.
\end{lemma}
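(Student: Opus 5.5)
The plan is to use end control (\cref{prop:end-control}) to pick $T$ and $\varepsilon$, and then apply \cref{prop:smooth-cut-input} with $r$ itself as the cutting level. By \cref{prop:end-control}, $\Phi$ is end-order-preserving, and the uniform divergence \eqref{eq:uniform-divergence} holds for both $\Phi$ and $\Phi^{-1}$. Fix any $\varepsilon>0$, say $\varepsilon=1$.

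\emph{Choice of $T$.} Apply \eqref{eq:uniform-divergence} to $\Phi$: since $\inf_{y}a(t,y)\to+\infty$ as $t\to+\infty$, there is $T_1$ such that $a(t,y)>r+\varepsilon$ for all $t\ge T_1$ and all $y$. Any $T\ge T_1$ then gives \eqref{eq:upper-above}.

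Next apply \eqref{eq:uniform-divergence} to $\Phi^{-1}$: since $\sup_z\bar a(s,z)\to-\infty$ as $s\to-\infty$, and $\bar a$ is continuous on the compact set $[s_0,r+\varepsilon]\times Y_1$, the function $\bar a$ is bounded above on $(-\infty,r+\varepsilon]\times Y_1$, say by $M$. Choosing $T>\max(T_1,M)$, every point $(s,z)$ with $s\le r+\varepsilon$ satisfies $\bar a(s,z)<T$. Hence $(s,z)=\Phi(\Phi^{-1}(s,z))\in\Phi((-\infty,T)\times Y_0)$, which is \eqref{eq:lower-contained}.

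\emph{Smooth cut.} Since \eqref{eq:upper-above} and \eqref{eq:lower-contained} with $r+\varepsilon$ imply \eqref{eq:smooth-cut-hypotheses} at level $r$ (because $(r+\varepsilon,\infty)\subset(r,\infty)$ and $(-\infty,r]\subset(-\infty,r+\varepsilon]$), we can invoke \cref{prop:smooth-cut-input} with $F=\Phi$. This needs $\Phi$ to be an orientation-preserving end-order-preserving diffeomorphism. It is orientation-preserving because it is a symplectomorphism and both symplectic orientations agree with the product orientations, as noted in \cref{sec:preliminaries}. The proposition then yields that $W_{10}$ is a compact smooth cobordism from $\{r\}\times Y_1$ to $\Sigma_T$, invertible from $Y_1$ to $Y_0$, and in particular an $h$-cobordism.

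The only real point is the upper bound on $\bar a$ over the whole half-infinite region. Uniform divergence controls only the far negative end, so compactness must supply the bound on the bounded middle piece. The room $\varepsilon$ is kept because later steps presumably need a collar above level $r$ inside the cut.
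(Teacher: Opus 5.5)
Your proof is correct and follows the paper's argument. Uniform divergence of $\Phi^{-1}$ plus compactness bounds $\bar a$ on a lower half-space, uniform divergence of $\Phi$ pushes $\Sigma_T$ above the cut, and \cref{prop:smooth-cut-input} supplies the invertible cobordism. The differences are cosmetic: fixing $\varepsilon$ in advance lets you bound $\bar a$ on $(-\infty,r+\varepsilon]\times Y_1$ directly rather than using the paper's uniform-continuity step, and you take compactness and the boundary identification from \cref{prop:smooth-cut-input} instead of re-verifying them by hand.
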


\begin{proof}
By the negative-end statement in \eqref{eq:uniform-divergence}, applied to $\Phi^{-1}$, there is $s_0<r$ such that
\[
  \bar a(s,z)<0
  \qquad(s\le s_0,\ z\in Y_1).
\]
The function $\bar a$ is bounded above on the compact set $[s_0,r]\times Y_1$.  Consequently
\begin{equation}\label{eq:Mr}
  M_r:=\sup\{\bar a(s,z):s\le r,\ z\in Y_1\}<\infty.
\end{equation}
By \eqref{eq:uniform-divergence}, we can choose $T>M_r$ so large that
\[
  m_T:=\min_{y\in Y_0}a(T,y)>r.
\]
Set $\varepsilon_1=m_T-r>0$.  Since
\[
  \max_{z\in Y_1}\bar a(r,z)\le M_r<T,
\]
uniform continuity on a compact neighborhood of $\{r\}\times Y_1$ gives $\varepsilon_2>0$ such that
\[
  \bar a(s,z)<T
  \qquad(r\le s\le r+\varepsilon_2,\ z\in Y_1).
\]
Choose
\[
  0<\varepsilon<\min\{\varepsilon_1,\varepsilon_2\}.
\]
Then $a(T,y)>r+\varepsilon$ for every $y$, proving \eqref{eq:upper-above}.  If $s\le r$, \eqref{eq:Mr} and $T>M_r$ give $\bar a(s,z)<T$; if $r\le s\le r+\varepsilon$, the same conclusion follows from the choice of $\varepsilon_2$.  Thus every $(s,z)\in(-\infty,r+\varepsilon]\times Y_1$ has preimage source height strictly less than $T$, which proves \eqref{eq:lower-contained}.

The two boundary hypersurfaces $\{r\}\times Y_1$ and $\Sigma_T$ are disjoint by \eqref{eq:upper-above}, and \eqref{eq:lower-contained} shows that the standard slice has a collar inside the first set in \eqref{eq:W10}.  Thus the boundary of $W_{10}$ is exactly
\[
  (\{r\}\times Y_1)\sqcup\Sigma_T.
\]
To prove compactness, use the negative-end statement of \eqref{eq:uniform-divergence} for $\Phi$: there is $S<T$ such that $a(t,y)<r$ whenever $t\le S$.  Therefore
\[
  \Phi^{-1}(W_{10})\subset[S,T]\times Y_0,
\]
which is compact.  The invertibility assertion follows from \cref{prop:smooth-cut-input}, whose two hypotheses hold by \eqref{eq:upper-above} and \eqref{eq:lower-contained}.
\end{proof}

\subsection{The Liouville structure}

We now equip the compact smooth cobordism constructed in
\cref{lem:choice-levels} with a Liouville structure.  Recall that
\[
W_{10}
=
\Phi((-\infty,T]\times Y_0)\cap([r,\infty)\times Y_1)
\]
has boundary
\[
\partial W_{10}
=
(\{r\}\times Y_1)\sqcup\Sigma_T,
\qquad
\Sigma_T=\Phi(\{T\}\times Y_0).
\]
The hypersurface $\Sigma_T$ is the outer boundary of the cut, inherited
from the boundary of $\Phi((-\infty,T]\times Y_0)$, whereas
$\{r\}\times Y_1$ is the lower boundary introduced by cutting with the
half-symplectization $[r,\infty)\times Y_1$.

\begin{proposition}[Forward Liouville $h$-cobordism]
\label{prop:forward}
For $r$ sufficiently large and $T$ chosen as in
\cref{lem:choice-levels}, the compact cobordism $W_{10}$ admits a
Liouville form $\Lambda_{10}$ for which
\[
(W_{10},\Lambda_{10})
\colon
(Y_1,\xi_1)\rightsquigarrow(Y_0,\xi_0)
\]
is a Liouville cobordism.  More precisely, the Liouville vector field
points inward along the lower boundary $\{r\}\times Y_1$ and outward
along the upper boundary $\Sigma_T$.  With the boundary
parametrizations specified below, these two boundary components carry
$\xi_1$ and $\xi_0$, respectively.

Moreover, the underlying smooth cobordism is invertible and hence is
an $h$-cobordism.
\end{proposition}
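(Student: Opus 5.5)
We build $\Lambda_{10}$ by interpolating, on $W_{10}\subset[r,\infty)\times Y_1$, between two primitives of $\omega_1$. Near the upper boundary $\Sigma_T$ we use $\mu=(\Phi^{-1})^*\lambda_0$. Near the lower boundary $\{r\}\times Y_1$ we use $\lambda_1+\pi^*\beta$.

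By \cref{lem:discrepancy}, $\mu=\lambda_1+\pi^*\beta+dG$. Choose a cutoff $\chi\colon V_1\to[0,1]$ with $\chi\equiv0$ on $(-\infty,r+\varepsilon/2]\times Y_1$ and $\chi\equiv1$ on a neighborhood of $\Sigma_T$. This is possible since, by \eqref{eq:upper-above}, $\Sigma_T$ lies in $(r+\varepsilon,\infty)\times Y_1$. For instance, take $\chi=\chi(t)$ depending only on the $\R$-coordinate and equal to $1$ for $t\ge r+\varepsilon$. Set
\[
  \Lambda_{10}=\lambda_1+\pi^*\beta+d(\chi G).
\]
Then $d\Lambda_{10}=\omega_1$, since $d\pi^*\beta=0$, so $d\Lambda_{10}$ is symplectic with no estimate needed. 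Near $\Sigma_T$ we have $\Lambda_{10}=\mu$. Near $\{r\}\times Y_1$ we have $\Lambda_{10}=e^t\alpha_1+\beta$. The point of the construction is that cutting off only the exact term $dG$, never the closed term $\pi^*\beta$, keeps the form a primitive of $\omega_1$ everywhere.

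\emph{Upper boundary.} Parametrize it by $j_+=\Phi\circ(T,\cdot)\colon Y_0\to\Sigma_T$. Then $j_+^*\Lambda_{10}=j_+^*\mu=e^T\alpha_0$, a positive contact form for $\xi_0$. The Liouville field of $\mu$ is $\Phi_*(\partial_t)$, since $\partial_t$ is the Liouville field of $\lambda_0$. It points outward along $\Sigma_T$: $\Phi((-\infty,T]\times Y_0)$ lies on the side from which $\partial_t$ exits, and orientations are preserved.

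\emph{Lower boundary.} Parametrize it by $j_-(z)=(r,z)$, so $j_-^*\Lambda_{10}=\theta_{r,1}=e^r\alpha_1+\beta$. This form is contact for $r$ large by \cref{lem:high-slice}. It defines $\xi_1$ up to isotopy, because the path $\theta_{r,s}$, $s\in[0,1]$, consists of contact forms; Gray stability then gives a contactomorphism $(Y_1,\ker\theta_{r,1})\cong(Y_1,\xi_1)$ isotopic to the identity. Composing $j_-$ with this diffeomorphism gives a parametrization carrying $\xi_1$. If one insists on a contact form for $\xi_1$ itself, we would rescale after the Gray isotopy, which only changes the form by a positive function.

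\emph{Inward direction.} Write $Z=Z_t\partial_t+Z_Y$ for the Liouville field of $\lambda_1+\pi^*\beta$ near $t=r$. Since $d(\lambda_1+\pi^*\beta)=\omega_1$, we have $Z=\partial_t+X$, where $\iota_X\omega_1=\pi^*\beta$. Decompose $\beta=\beta(R_{\alpha_1})\alpha_1+\beta^\xi$ and solve at a point of height $t$. The $\partial_t$-component of $X$ is $e^{-t}\beta(R_{\alpha_1})$, since $\iota_{\partial_t}\omega_1=e^t\alpha_1$ and the remaining part of $X$ is tangent to $\xi_1$. Hence $dt(Z)=1+e^{-r}\beta(R_{\alpha_1})$ along $t=r$, which is positive by \eqref{eq:positive-radial-component}. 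So $Z$ points inward, into $t>r$.

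The $h$-cobordism assertion is exactly \cref{lem:choice-levels}. The main obstacle is the bookkeeping at the lower end. We must verify that the radial-component calculation above is correct in sign and normalization. We must also verify that the Gray-isotopy reparametrization is compatible with the boundary parametrization conventions of \cref{def:liouville-cobordism} while keeping the invertibility from \cref{prop:smooth-cut-input}. This holds because the reparametrization is isotopic to the identity and so does not change the smooth cobordism class.
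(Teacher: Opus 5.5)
Your proposal is correct and follows essentially the same route as the paper: the same primitive $\lambda_1+\pi^*\beta+d(\chi G)$, which cuts off only the exact term; the same identification $\Lambda_{10}=\mu$ near $\Sigma_T$, with Liouville field $\Phi_*\partial_t$; the same Gray reparametrization $(r,\psi(\cdot))$ of the lower boundary; and the same radial component $dt(Z)=1+e^{-r}\beta(R_{\alpha_1})>0$. Your $t$-dependent cutoff is a convenient explicit choice, and no rescaling is needed at the lower end, because $\psi^*\theta_{r,1}=he^r\alpha_1$ is already a positive contact form defining $\xi_1$, which is all the definition requires.
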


\begin{proof}
Choose $r$ satisfying \cref{lem:high-slice}, and then choose $T$ and
$W_{10}$ as in \cref{lem:choice-levels}.  Let
\[
j_T\colon Y_0\longrightarrow\Sigma_T,
\qquad
j_T(y)=\Phi(T,y),
\]
be the upper boundary parametrization.

Choose a smooth function
\[
\chi\colon W_{10}\longrightarrow[0,1]
\]
that is identically zero near the lower boundary and identically one
near the upper boundary.  Define
\begin{equation}\label{eq:Lambda10}
\Lambda_{10}
=
\lambda_1+\pi^*\beta+d(\chi G).
\end{equation}
Since $\beta$ is closed,
\begin{equation}\label{eq:dLambda}
d\Lambda_{10}
=
d\lambda_1
=
\omega_1|_{W_{10}}.
\end{equation}
Thus $\Lambda_{10}$ is a primitive of the fixed symplectic form on the
compact cut.  It remains to verify the contact structures and the
direction of the associated Liouville vector field on the two boundary
components.

\medskip
\noindent
\emph{The upper boundary.}
Near the upper boundary $\Sigma_T$ we have $\chi=1$, and hence
\eqref{eq:discrepancy-decomposition} gives
\[
\Lambda_{10}
=
\lambda_1+\pi^*\beta+dG
=
\mu.
\]
Consequently,
\begin{equation}\label{eq:upper-form}
j_T^*\Lambda_{10}
=
(\Phi\circ i_T)^*(\Phi^{-1})^*\lambda_0
=
i_T^*\lambda_0
=
e^T\alpha_0,
\end{equation}
where $i_T(y)=(T,y)$.  Thus the contact structure induced on
$\Sigma_T$, under the parametrization $j_T$, is precisely $\xi_0$.

We also check the direction of the Liouville vector field.  Since
\[
\mu=(\Phi^{-1})^*\lambda_0
\]
and $\partial_t$ is the Liouville vector field of $\lambda_0$ on
$\mathbb R\times Y_0$, the Liouville vector field of $\mu$ is
\[
\Phi_*\partial_t.
\]
Indeed,
\[
\iota_{\Phi_*\partial_t}d\mu
=
(\Phi^{-1})^*
\bigl(\iota_{\partial_t}d\lambda_0\bigr)
=
(\Phi^{-1})^*\lambda_0
=
\mu.
\]
Along $\Sigma_T=\Phi(\{T\}\times Y_0)$, the vector field
$\Phi_*\partial_t$ points out of
\[
\Phi((-\infty,T]\times Y_0).
\]
 Hence $\Sigma_T$ is the positive,
or convex, boundary of the Liouville cobordism.

\medskip
\noindent
\emph{The lower boundary.}
Near the lower boundary $\chi=0$, and hence
\begin{equation}\label{eq:lower-form}
\Lambda_{10}
=
e^t\alpha_1+\beta,
\end{equation}
where we suppress the pullback symbol for the one-form $\beta$ on
$Y_1$.  Its restriction to $\{r\}\times Y_1$ is
\[
\theta_{r,1}
=
e^r\alpha_1+\beta.
\]
By \cref{lem:high-slice}, the path \eqref{eq:theta} joins
$e^r\alpha_1$ to $\theta_{r,1}$ through positive contact forms.
Gray stability therefore gives a diffeomorphism
\[
\psi\in\operatorname{Diff}_0(Y_1)
\]
and a positive function $h\colon Y_1\to\mathbb R_{>0}$ such that
\begin{equation}\label{eq:gray-lower-form}
\psi^*\theta_{r,1}
=
he^r\alpha_1.
\end{equation}
We use
\[
j_r\colon Y_1\longrightarrow\{r\}\times Y_1,
\qquad
j_r(y)=(r,\psi(y)),
\]
as the lower boundary parametrization.  Then
\[
j_r^*\Lambda_{10}
=
\psi^*\theta_{r,1}
=
he^r\alpha_1,
\]
which is a positive contact form defining $\xi_1$.

It remains to prove that the Liouville vector field points inward along
this boundary.  Let $R_{\alpha_1}$ denote the Reeb vector field of
$\alpha_1$.  Using the splitting
\[
T(\mathbb R\times Y_1)
=
\mathbb R\partial_t
\oplus
\mathbb R R_{\alpha_1}
\oplus
\xi_1,
\]
write the Liouville vector field near the lower boundary as
\[
Z
=
a\partial_t+bR_{\alpha_1}+v,
\qquad
v\in\xi_1.
\]
By definition,
\[
\iota_Zd\Lambda_{10}
=
\Lambda_{10}.
\]
Since $d\beta=0$, equation \eqref{eq:lower-form} gives
\[
d\Lambda_{10}
=
d(e^t\alpha_1)
=
e^t(dt\wedge\alpha_1+d\alpha_1),
\]
and therefore
\begin{align*}
\iota_Zd\Lambda_{10}
&=
e^t\Bigl(
\iota_Z(dt\wedge\alpha_1)
+
\iota_Zd\alpha_1
\Bigr)\\
&=
e^t\Bigl(
dt(Z)\alpha_1-\alpha_1(Z)\,dt
+
\iota_vd\alpha_1
\Bigr)\\
&=
e^t\Bigl(
a\alpha_1-b\,dt+\iota_vd\alpha_1
\Bigr).
\end{align*}
Here we used
\[
dt(Z)=a,
\qquad
\alpha_1(Z)=b,
\qquad
\iota_{R_{\alpha_1}}d\alpha_1=0,
\]
together with the fact that $d\alpha_1$ has no $\partial_t$-component.

On the other hand, with respect to the decomposition
\[
TY_1
=
\mathbb R R_{\alpha_1}\oplus\xi_1,
\]
write
\[
\beta
=
\beta(R_{\alpha_1})\alpha_1+\beta_\xi,
\]
where
\[
\beta_\xi(R_{\alpha_1})=0,
\qquad
\beta_\xi|_{\xi_1}
=
\beta|_{\xi_1}.
\]
The Liouville equation
\[
\iota_Zd\Lambda_{10}
=
\Lambda_{10}
=
e^t\alpha_1+\beta
\]
therefore becomes
\[
e^t
\bigl(
a\alpha_1-b\,dt+\iota_vd\alpha_1
\bigr)
=
\bigl(e^t+\beta(R_{\alpha_1})\bigr)\alpha_1
+
\beta_\xi.
\]

We now compare the three components of this identity.  The
$dt$-component gives
\[
b=0.
\]
Evaluating on $R_{\alpha_1}$ gives
\[
e^ta
=
e^t+\beta(R_{\alpha_1}),
\]
and hence
\[
a
=
1+e^{-t}\beta(R_{\alpha_1}).
\]
Finally, restricting the identity to $\xi_1$ gives
\[
e^t\iota_vd\alpha_1
=
\beta|_{\xi_1},
\]
or equivalently
\[
\iota_vd\alpha_1
=
e^{-t}\beta|_{\xi_1}.
\]
Thus
\begin{equation}\label{eq:liouville-vector}
b=0,
\qquad
a=1+e^{-t}\beta(R_{\alpha_1}),
\qquad
\iota_vd\alpha_1=e^{-t}\beta|_{\xi_1}.
\end{equation}
The last equation determines $v\in\xi_1$ uniquely because
$d\alpha_1|_{\xi_1}$ is nondegenerate.

In particular,
\[
dt(Z)
=
a
=
1+e^{-t}\beta(R_{\alpha_1}).
\]
By the choice of $r$ in \cref{lem:high-slice},
\eqref{eq:positive-radial-component} gives
\[
dt(Z)>0
\qquad
\text{along }\{r\}\times Y_1.
\]
Now $W_{10}$ lies locally on the side $t\ge r$ of its lower boundary.
Hence the positive $t$-direction points into $W_{10}$ along
$\{r\}\times Y_1$.  It follows that $Z$ points inward there.
Therefore $\{r\}\times Y_1$ is the negative, or concave, boundary of
the Liouville cobordism.

We have shown that $(W_{10},\Lambda_{10})$ is a Liouville cobordism
\[
(Y_1,\xi_1)\rightsquigarrow(Y_0,\xi_0).
\]
Finally, the underlying marked smooth cobordism is invertible by
\cref{lem:choice-levels}.  Replacing the original lower boundary
marking by its composition with the Gray diffeomorphism $\psi$ does
not affect smooth invertibility.  Hence the underlying smooth
cobordism is invertible and, in particular, is an $h$-cobordism.
\end{proof}

\begin{proof}[Proof of \cref{thm:bidirectional-intro}]
Applying \cref{prop:forward} to $\Phi$ gives a Liouville
$h$-cobordism
\[
(Y_1,\xi_1)\rightsquigarrow(Y_0,\xi_0).
\]
Applying the same proposition to the inverse symplectomorphism
\[
\Phi^{-1}\colon S(Y_1,\xi_1)\longrightarrow S(Y_0,\xi_0)
\]
gives a Liouville $h$-cobordism in the opposite direction,
\[
(Y_0,\xi_0)\rightsquigarrow(Y_1,\xi_1).
\]
This proves the theorem.
\end{proof}
\begin{remark}[No symplectic cancellation is asserted]\label{rem:no-cancellation}
The two Liouville cobordisms need not be inverse morphisms in a symplectic cobordism category.  The argument proves smooth invertibility of each underlying cobordism, not that either Liouville structure is a product or that the two Liouville cobordisms cancel.
\end{remark}

\section{Floer contact invariants}\label{sec:floer}

Throughout this section, coefficients are in $\F=\Z/2\Z$.

\subsection{Monopole Floer naturality}

We use monopole Floer homology at this stage for a specific reason.
The cobordisms produced by
\cref{thm:bidirectional-intro} are Liouville, and hence strong
symplectic, cobordisms, but they are not known to be Stein or
Weinstein.

For the Ozsv\'ath--Szab\'o contact invariant, Hedden and
Tovstopyat-Nelip proved strong functoriality under Stein cobordisms
\cite[Theorem~1.3]{HeddenTovstopyatNelip2022}.  More precisely, if
$(W,J)$ is a Stein cobordism from $(Y_-,\xi_-)$ to
$(Y_+,\xi_+)$ and $\mathfrak{k}$ is the canonical
$\SpinC$ structure induced by $J$, then the Heegaard Floer
cobordism map associated to the reversed cobordism satisfies
\[
F_{W^\dagger,\mathfrak{k}}
\bigl(\widehat c_{\mathrm{HF}}(\xi_+)\bigr)
=
\widehat c_{\mathrm{HF}}(\xi_-),
\]
while the corresponding maps in the other $\SpinC$ structures
annihilate the contact class.

The analogous statement for arbitrary strong symplectic cobordisms
is not presently available in Heegaard Floer theory.  Indeed,
immediately after their Stein-cobordism naturality theorem,
Hedden and Tovstopyat-Nelip explicitly ask whether the contact
element is natural under strong symplectic cobordisms
\cite[Question~1.4]{HeddenTovstopyatNelip2022}.
To the best of our knowledge, this question remains open in the
generality needed here.  Mark and Tosun likewise point out that the
known equivalence between monopole Floer and Heegaard Floer homology
is not known to extend to the maps induced by general
four-dimensional cobordisms; in particular, one cannot simply
transport strong-cobordism naturality from monopole Floer homology
to Heegaard Floer homology
\cite{MarkTosun2022}.

In contrast, Echeverria proved precisely the required naturality
statement for the monopole Floer contact invariant under arbitrary
strong symplectic cobordisms \cite{Echeverria2020}.  Thus monopole
Floer homology provides the natural bridge between the Liouville
cobordisms constructed in \cref{thm:bidirectional-intro} and the
Heegaard Floer contact invariants used later in the classification
arguments.

Let
\[
  \widecheck c_{\mathrm{HM}}(\xi;\F)
  \in\widecheck{HM}_{\bullet}(-Y,\mathfrak s_\xi;\F)
\]
denote the monopole Floer contact invariant.  Echeverria's theorem
states that if $(W,\omega)$ is a strong symplectic cobordism from
$(Y_-,\xi_-)$ to $(Y_+,\xi_+)$, then the monopole Floer map
associated to the reversed cobordism, in the canonical symplectic
$\SpinC$ structure, sends
\begin{equation}\label{eq:echeverria}
  \widecheck c_{\mathrm{HM}}(\xi_+;\F)
  \longmapsto
  \widecheck c_{\mathrm{HM}}(\xi_-;\F).
\end{equation}
A Liouville cobordism is, in particular, a strong symplectic
cobordism.  We shall also use the following two consequences of
Echeverria's theorem \cite{Echeverria2020}:
\begin{equation}\label{eq:HM-fillability-overtwisted}
\begin{aligned}
  (Y,\xi)\text{ strongly symplectically fillable}
  &\quad\Longrightarrow\quad
  \widecheck c_{\mathrm{HM}}(\xi;\F)\ne0,\\
  \xi\text{ overtwisted}
  &\quad\Longrightarrow\quad
  \widecheck c_{\mathrm{HM}}(\xi;\F)=0.
\end{aligned}
\end{equation}

\begin{proof}[Proof of \cref{cor:HM-intro}]
Let $W_{01}$ be the Liouville cobordism from $(Y_0,\xi_0)$ to $(Y_1,\xi_1)$ supplied by \cref{thm:bidirectional-intro}.  By \eqref{eq:echeverria}, vanishing of $\widecheck c_{\mathrm{HM}}(\xi_1;\F)$ implies vanishing of $\widecheck c_{\mathrm{HM}}(\xi_0;\F)$.  The reverse Liouville cobordism $W_{10}$ gives the converse implication.
\end{proof}

\subsection{Comparison with the Heegaard Floer contact invariant}

A rational homology three-sphere $Y$ is an \emph{$L$-space} if
\[
  \dim_{\F}\widehat{HF}(Y;\F)=|H_1(Y;\Z)|.
\]
Equivalently, for every $\mathfrak s\in\SpinC(Y)$,
\[
  HF^+(Y,\mathfrak s;\F)\cong\Tplus
  :=\F[U,U^{-1}]/U\F[U]
\]
up to an absolute grading shift.

For a positive contact structure $\xi$ on $Y$, write
\[
  \widehat c_{\mathrm{HF}}(\xi;\F)
  \in\widehat{HF}(-Y,\mathfrak s_\xi;\F)
\]
for the Ozsv\'ath--Szab\'o contact invariant and
\[
  c^+_{\mathrm{HF}}(\xi;\F)
  \in HF^+(-Y,\mathfrak s_\xi;\F)
\]
for its plus version \cite{OzsvathSzabo2005}.  The natural map
\[
  \iota\colon\widehat{HF}(-Y,\mathfrak s_\xi;\F)
  \longrightarrow HF^+(-Y,\mathfrak s_\xi;\F)
\]
sends $\widehat c_{\mathrm{HF}}(\xi;\F)$ to $c^+_{\mathrm{HF}}(\xi;\F)$.

\begin{lemma}[Plus Heegaard Floer--monopole comparison]
\label{lem:plus-comparison}
For every closed positive cooriented contact three-manifold $(Y,\xi)$,
\begin{equation}\label{eq:plus-comparison}
  c^+_{\mathrm{HF}}(\xi;\F)=0
  \quad\Longleftrightarrow\quad
  \widecheck c_{\mathrm{HM}}(\xi;\F)=0.
\end{equation}
\end{lemma}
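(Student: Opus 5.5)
The plan is to deduce \eqref{eq:plus-comparison} from the chain of isomorphisms relating Heegaard Floer, embedded contact homology, and monopole Floer homology, each of which carries the relevant contact class to the next one. The claim is only that vanishing is equivalent, so no finer information about the images is needed.

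First, I would invoke Colin--Ghiggini--Honda \cite{ColinGhigginiHondaIII}. They construct an isomorphism $HF^+(-Y)\cong ECH(Y)$, in the $\SpinC$ structure $\mathfrak s_\xi$ and with $\F$ coefficients. Their statement includes that this isomorphism sends $c^+_{\mathrm{HF}}(\xi;\F)$ to the ECH contact class $c_{\mathrm{ECH}}(\xi)$, which is the class of the empty orbit set.

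Second, I would use Taubes' isomorphism $ECH(Y)\cong\widecheck{HM}_\bullet(-Y)$ \cite{TaubesECHSWV}, again matched with $\mathfrak s_\xi$ up to the standard conjugation and grading conventions. Taubes also shows, in the companion paper on the contact class, that this isomorphism sends $c_{\mathrm{ECH}}(\xi)$ to the monopole contact class $\widecheck c_{\mathrm{HM}}(\xi;\F)$. Composing the two isomorphisms gives an isomorphism of $\F$-vector spaces taking $c^+_{\mathrm{HF}}(\xi;\F)$ to $\widecheck c_{\mathrm{HM}}(\xi;\F)$. Since an isomorphism carries zero exactly to zero, the two classes vanish together.

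The main obstacle is bookkeeping rather than new mathematics. The orientation conventions must match: both invariants live on $-Y$ in the paper's notation, and ECH is defined on $Y$ itself. The $\SpinC$ labels must also correspond, since Taubes' identification uses $\mathfrak s_\xi$ twisted by the class determined by the empty orbit set. Finally, the coefficients must agree: CGH work over $\F$, and the monopole group here is the $\bullet$-completed $\widecheck{HM}$ with $\F$ coefficients.

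I would address this by citing the precise statements: CGH's compatibility of the contact classes in part III, and Taubes' identification of the ECH contact class with the Kronheimer--Mrowka contact invariant. I would then remark that whatever sign or $\SpinC$ conjugation conventions differ, they are applied by isomorphisms. Isomorphisms cannot change whether the class vanishes, so only the vanishing statement survives, which is all the lemma asserts.
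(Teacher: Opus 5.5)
Your proposal is correct and follows the paper's proof essentially verbatim. Both arguments compose two isomorphisms. The first is the Colin--Ghiggini--Honda isomorphism $HF^+(-Y;\F)\cong ECH(Y,\alpha;\F)$, which carries $c^+_{\mathrm{HF}}$ to the ECH contact class. The second is Taubes's isomorphism, which carries the ECH contact class to $\widecheck c_{\mathrm{HM}}$, and vanishing is then preserved because the composite is an isomorphism.

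The bookkeeping you flag (orientation reversal, $\SpinC$ labels, and the completed group $\widecheck{HM}_\bullet$) is handled in the paper no more explicitly than in your plan. The paper simply appeals to the standard identification of the Seiberg--Witten Floer cohomology of $Y$ with $\widecheck{HM}_\bullet(-Y,\mathfrak s_\xi;\F)$.
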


\begin{proof}
Choose a positive contact form $\alpha$ for $\xi$.
Colin--Ghiggini--Honda construct a $U$-equivariant isomorphism
\[
  \Phi^+\colon
  HF^+(-Y;\F)
  \xrightarrow{\;\cong\;}
  ECH(Y,\alpha;\F)
\]
and prove that, on homology, $\Phi^+$ sends the Ozsv\'ath--Szab\'o
plus contact class to the ECH contact class
\cite[Theorem~1.0.1]{ColinGhigginiHondaIII}.
Their theorem is stated with $\F=\mathbb Z/2\mathbb Z$
coefficients.

On the other hand, Taubes's isomorphism between embedded contact
homology and Seiberg--Witten Floer cohomology identifies the ECH
contact element with the monopole Floer contact element
\cite[Theorem~1.1]{TaubesECHSWV}.
Under the standard identification of the relevant Seiberg--Witten
Floer cohomology group with
$\widecheck{HM}_{\bullet}(-Y,\mathfrak s_\xi;\F)$, we therefore obtain
an isomorphism
\[
  HF^+(-Y,\mathfrak s_\xi;\F)
  \xrightarrow{\;\cong\;}
  \widecheck{HM}_{\bullet}(-Y,\mathfrak s_\xi;\F)
\]
which carries
$c^+_{\mathrm{HF}}(\xi;\F)$ to
$\widecheck c_{\mathrm{HM}}(\xi;\F)$.
Hence one contact class vanishes if and only if the other does.
\end{proof}

\begin{corollary}[Plus contact invariant]\label{cor:plus-vanishing}
If $(Y_0,\xi_0)$ and $(Y_1,\xi_1)$ have symplectomorphic symplectizations, then
\[
  c^+_{\mathrm{HF}}(\xi_0;\F)=0
  \quad\Longleftrightarrow\quad
  c^+_{\mathrm{HF}}(\xi_1;\F)=0.
\]
\end{corollary}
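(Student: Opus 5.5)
This follows by chaining two results already established. The plan is to pass from the plus Heegaard Floer contact class to the monopole contact class with \cref{lem:plus-comparison}, and then transfer vanishing across the symplectomorphism with \cref{cor:HM-intro}. Concretely, assume $S(Y_0,\xi_0)\congsymp S(Y_1,\xi_1)$ and suppose $c^+_{\mathrm{HF}}(\xi_0;\F)=0$. Applying \eqref{eq:plus-comparison} to $(Y_0,\xi_0)$ gives $\widecheck c_{\mathrm{HM}}(\xi_0;\F)=0$. \Cref{cor:HM-intro}, which rests on the two Liouville cobordisms of \cref{thm:bidirectional-intro} together with Echeverria's naturality \eqref{eq:echeverria}, then gives $\widecheck c_{\mathrm{HM}}(\xi_1;\F)=0$. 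Applying \eqref{eq:plus-comparison} once more, now to $(Y_1,\xi_1)$, yields $c^+_{\mathrm{HF}}(\xi_1;\F)=0$.

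For the converse implication I will either run the same chain in reverse or apply it to the inverse symplectomorphism $\Phi^{-1}$. This is legitimate because both \cref{lem:plus-comparison} and \cref{cor:HM-intro} are stated as symmetric equivalences.

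I expect no real obstacle here, since the substantive input (the Taubes and Colin--Ghiggini--Honda identification of contact classes, and strong-cobordism naturality in monopole Floer homology) is already packaged in the cited lemma and corollary. The only point to check is bookkeeping. The plus class lives in $HF^+(-Y,\mathfrak s_\xi;\F)$ and the monopole class in $\widecheck{HM}_\bullet(-Y,\mathfrak s_\xi;\F)$, and \cref{lem:plus-comparison} compares them on the same manifold with the same $\SpinC$ structure and $\F$ coefficients. No cross-manifold identification of Floer groups is therefore needed: the only statement passed between $Y_0$ and $Y_1$ is the vanishing or nonvanishing of a class, which is exactly what \cref{cor:HM-intro} provides.
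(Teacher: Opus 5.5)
Your proposal is correct and is exactly the paper's argument: the paper proves this corollary in one line by combining \cref{cor:HM-intro} with \cref{lem:plus-comparison}, which is the chain you describe. Your remark that only vanishing is transferred between $Y_0$ and $Y_1$, so no cross-manifold identification of Floer groups is needed, is the right bookkeeping check.
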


\begin{proof}
Combine \cref{cor:HM-intro,lem:plus-comparison}.
\end{proof}

\begin{lemma}[Contact-class comparison on $L$-spaces]
\label{lem:comparison}
Let $(Y,\xi)$ be a closed positive cooriented contact three-manifold
whose underlying oriented manifold is an $L$-space. Then
\begin{equation}\label{eq:comparison}
  \widehat c_{\mathrm{HF}}(\xi;\mathbb F_2)=0
  \quad\Longleftrightarrow\quad
  \widecheck c_{\mathrm{HM}}(\xi;\mathbb F_2)=0.
\end{equation}
\end{lemma}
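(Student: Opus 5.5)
By \cref{lem:plus-comparison}, vanishing of $\widecheck c_{\mathrm{HM}}(\xi;\F)$ is equivalent to vanishing of $c^+_{\mathrm{HF}}(\xi;\F)=\iota(\widehat c_{\mathrm{HF}}(\xi;\F))$. So it suffices to show that, when $Y$ is an $L$-space, the natural map
\[
  \iota\colon\widehat{HF}(-Y,\mathfrak s_\xi;\F)\longrightarrow HF^+(-Y,\mathfrak s_\xi;\F)
\]
is injective. Given injectivity, $\widehat c_{\mathrm{HF}}(\xi;\F)=0$ holds if and only if $c^+_{\mathrm{HF}}(\xi;\F)=0$, which by \cref{lem:plus-comparison} holds if and only if $\widecheck c_{\mathrm{HM}}(\xi;\F)=0$.

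To prove injectivity, I first pass to $-Y$. The $L$-space condition is preserved under orientation reversal, since $\widehat{HF}(-Y)$ is dual to $\widehat{HF}(Y)$ and $|H_1|$ is unchanged. Hence, as recorded in the definition above, $HF^+(-Y,\mathfrak s;\F)\cong\Tplus$ and $\widehat{HF}(-Y,\mathfrak s;\F)\cong\F$ for every $\SpinC$ structure. The latter follows from the exact triangle below together with the total count $\dim\widehat{HF}=|H_1|$, or equivalently from one rank per $\SpinC$ structure.

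Next I use the $U$-exact triangle
\[
  \cdots\to\widehat{HF}(-Y,\mathfrak s)\xrightarrow{\ \iota\ }HF^+(-Y,\mathfrak s)\xrightarrow{\ U\ }HF^+(-Y,\mathfrak s)\to\cdots.
\]
On $\Tplus$, multiplication by $U$ is surjective, and its kernel is the one-dimensional bottom tower element. Exactness gives $\operatorname{im}\iota=\ker U\cong\F$. The connecting map $HF^+\to\widehat{HF}$ is zero, because its kernel equals the image of $U$, which is everything. Therefore $\ker\iota$, which equals the image of the connecting map, is zero, and $\iota$ is injective; in fact it is an isomorphism onto $\ker U$.

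The only delicate point is bookkeeping rather than mathematics. One must check that the $L$-space property and the splitting along $\SpinC$ structures apply to $-Y$ in the class $\mathfrak s_\xi$ where the contact invariants live, and that $\iota$ is the map appearing in the triangle with $\F$ coefficients. I expect no genuine obstacle beyond citing the exact triangle from Ozsváth–Szabó.
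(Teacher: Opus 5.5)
Your proof is correct and follows essentially the same route as the paper: reduce via \cref{lem:plus-comparison} to showing that $\iota\colon\widehat{HF}(-Y,\mathfrak s_\xi;\F)\to HF^+(-Y,\mathfrak s_\xi;\F)$ is injective, and read this off from the hat--plus exact sequence using $HF^+(-Y,\mathfrak s)\cong\Tplus$. The only cosmetic difference is how injectivity is obtained. The paper uses the dimension count $\dim\widehat{HF}(-Y,\mathfrak s)=1=\dim\ker U$ together with $\operatorname{im}\iota=\ker U$, whereas you note that surjectivity of $U$ on $\Tplus$ makes the connecting map vanish; this makes the rank-one fact for $\widehat{HF}$ unnecessary, and you also justify passing the $L$-space property from $Y$ to $-Y$, which the paper leaves implicit.
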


\begin{proof}
Fix
\[
  \mathfrak s=\mathfrak s_\xi.
\]
Recall that the natural map
\[
  \iota\colon
  \widehat{HF}(-Y,\mathfrak s;\mathbb F_2)
  \longrightarrow
  HF^+(-Y,\mathfrak s;\mathbb F_2)
\]
appearing in the standard exact sequence relating the hat and plus
versions satisfies
\begin{equation}\label{eq:hat-plus-kernel}
  \operatorname{im}\iota
  =
  \ker\!\left(
    U\colon
    HF^+(-Y,\mathfrak s;\mathbb F_2)
    \longrightarrow
    HF^+(-Y,\mathfrak s;\mathbb F_2)
  \right).
\end{equation}
Moreover, the Ozsv\'ath--Szab\'o contact invariants in the two versions
are related by
\begin{equation}\label{eq:hat-to-plus-contact}
  \iota\bigl(
    \widehat c_{\mathrm{HF}}(\xi;\mathbb F_2)
  \bigr)
  =
  c^+_{\mathrm{HF}}(\xi;\mathbb F_2).
\end{equation}
In particular,
\[
  U\,c^+_{\mathrm{HF}}(\xi;\mathbb F_2)=0
\]
by \eqref{eq:hat-plus-kernel} and
\eqref{eq:hat-to-plus-contact}.

Because $Y$ is an $L$-space, for every
$\mathfrak s\in\SpinC(Y)$ one has
\[
  HF^+(-Y,\mathfrak s;\mathbb F_2)
  \cong
  \mathcal T^+
\]
up to an absolute grading shift, where
\[
  \mathcal T^+
  =
  \mathbb F_2[U,U^{-1}]
  /
  U\mathbb F_2[U].
\]
Hence
\[
  \dim_{\mathbb F_2}
  \ker\!\left(
    U\colon
    HF^+(-Y,\mathfrak s;\mathbb F_2)
    \longrightarrow
    HF^+(-Y,\mathfrak s;\mathbb F_2)
  \right)
  =1.
\]
The $L$-space condition also gives
\[
  \dim_{\mathbb F_2}
  \widehat{HF}(-Y,\mathfrak s;\mathbb F_2)
  =1.
\]
Therefore \eqref{eq:hat-plus-kernel} implies that
\[
  \iota\colon
  \widehat{HF}(-Y,\mathfrak s;\mathbb F_2)
  \xrightarrow{\;\cong\;}
  \ker U
  \subset
  HF^+(-Y,\mathfrak s;\mathbb F_2)
\]
is an isomorphism. Together with
\eqref{eq:hat-to-plus-contact}, this gives
\begin{equation}\label{eq:hat-plus-vanishing}
  \widehat c_{\mathrm{HF}}(\xi;\mathbb F_2)=0
  \quad\Longleftrightarrow\quad
  c^+_{\mathrm{HF}}(\xi;\mathbb F_2)=0.
\end{equation}

On the other hand, \cref{lem:plus-comparison} gives
\begin{equation}\label{eq:plus-monopole-vanishing}
  c^+_{\mathrm{HF}}(\xi;\mathbb F_2)=0
  \quad\Longleftrightarrow\quad
  \widecheck c_{\mathrm{HM}}(\xi;\mathbb F_2)=0.
\end{equation}
Combining
\eqref{eq:hat-plus-vanishing} and
\eqref{eq:plus-monopole-vanishing}
proves \eqref{eq:comparison}.
\end{proof}
\begin{corollary}[Heegaard Floer vanishing on $L$-spaces]\label{cor:HF-vanishing}
Suppose that $(Y_0,\xi_0)$ and $(Y_1,\xi_1)$ have symplectomorphic symplectizations and that the underlying manifolds are $L$-spaces.  Then
\[
  \widehat c_{\mathrm{HF}}(\xi_0;\F)=0
  \quad\Longleftrightarrow\quad
  \widehat c_{\mathrm{HF}}(\xi_1;\F)=0.
\]
\end{corollary}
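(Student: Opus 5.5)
The plan is to chain the monopole vanishing-transfer result with the $L$-space comparison lemma, applied once to each of $Y_0$ and $Y_1$. No new symplectic or Floer-theoretic input is needed. The chain of equivalences is
\[
  \widehat c_{\mathrm{HF}}(\xi_0;\F)=0
  \;\Longleftrightarrow\;
  \widecheck c_{\mathrm{HM}}(\xi_0;\F)=0
  \;\Longleftrightarrow\;
  \widecheck c_{\mathrm{HM}}(\xi_1;\F)=0
  \;\Longleftrightarrow\;
  \widehat c_{\mathrm{HF}}(\xi_1;\F)=0 .
\]
The first and last equivalences are \cref{lem:comparison}, applied to $(Y_0,\xi_0)$ and to $(Y_1,\xi_1)$; this is legitimate because both underlying oriented manifolds are $L$-spaces by hypothesis. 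The middle equivalence is \cref{cor:HM-intro}, which uses only that the symplectizations are symplectomorphic. Through \cref{thm:bidirectional-intro}, that hypothesis yields Liouville cobordisms in both directions, and Echeverria's naturality \eqref{eq:echeverria} pushes vanishing of the monopole contact class along each of them.

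An equivalent route goes through \cref{cor:plus-vanishing} together with \eqref{eq:hat-plus-vanishing} from the proof of \cref{lem:comparison}. There the $L$-space hypothesis makes $\iota$ injective on the $\SpinC$ summand $\mathfrak s_\xi$, so vanishing of the hat class and of the plus class coincide. I would cite whichever is cleaner; the first chain is the most direct.

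The only point needing care is orientation. The contact classes live in the Floer groups of $-Y_i$, whereas the hypothesis says that $Y_i$ is an $L$-space. This causes no trouble, since $\widehat{HF}(-Y)$ and $\widehat{HF}(Y)$ have the same total rank and $|H_1|$ is unchanged by reversing orientation, so $-Y_i$ is also an $L$-space. This is exactly how \cref{lem:comparison} is already phrased and used, so there is no genuine obstacle: the corollary is a formal consequence of the earlier results.
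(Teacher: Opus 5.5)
Your proposal is correct and matches the paper's proof: the paper also obtains the corollary by combining \cref{lem:comparison}, applied to each of $(Y_0,\xi_0)$ and $(Y_1,\xi_1)$, with \cref{cor:HM-intro}. Your remark that $-Y_i$ is again an $L$-space correctly identifies the orientation fact that \cref{lem:comparison} uses implicitly.
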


\begin{proof}
Combine \cref{cor:HM-intro,lem:comparison}.
\end{proof}

\section{Classwise rigidity criteria and connected sums}\label{sec:criterion}

The formal theorem identifies the full oriented plane-field homotopy class, while the bidirectional Liouville cobordisms preserve vanishing of the monopole contact invariant.  This gives a criterion without any $L$-space hypothesis.

\begin{proposition}[Monopole classwise criterion]\label{prop:HM-criterion}
Let $\mathscr D$ be a class of closed connected oriented three-manifolds.  Assume that, for every $Y\in\mathscr D$:
\begin{enumerate}[label=\textup{(\roman*)},leftmargin=2.7em]
  \item a positive cooriented contact structure $\xi$ on $Y$ is tight if and only if $\widecheck c_{\mathrm{HM}}(\xi;\F)\ne0$;
  \item two tight positive cooriented contact structures on $Y$ that are homotopic as oriented plane fields are contactomorphic.
\end{enumerate}
Then the symplectization question has an affirmative answer for arbitrary positive cooriented contact structures on manifolds in $\mathscr D$.
\end{proposition}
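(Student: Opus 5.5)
The reverse implication is immediate: a contactomorphism lifts to a strict exact symplectomorphism by \cref{lem:strict-lift}. So the plan is to assume a symplectomorphism $\Phi\colon S(Y_0,\xi_0)\to S(Y_1,\xi_1)$ with $Y_0,Y_1\in\mathscr D$ and to produce a contactomorphism $(Y_0,\xi_0)\congcont(Y_1,\xi_1)$.

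First, apply \cref{thm:formal-rigidity}. This gives an orientation-preserving diffeomorphism $f\colon Y_0\to Y_1$ such that $\xi_0$ and $f^*\xi_1$ are homotopic as oriented plane fields. Next, I replace $(Y_1,\xi_1)$ by $(Y_0,f^*\xi_1)$. By \cref{lem:strict-lift}, $f$ is a contactomorphism $(Y_0,f^*\xi_1)\to(Y_1,\xi_1)$ and lifts to a strict exact symplectomorphism of symplectizations. Composing this lift with $\Phi$ yields a symplectomorphism $S(Y_0,\xi_0)\congsymp S(Y_0,f^*\xi_1)$. It therefore suffices to prove that $\xi_0$ and $\xi_0':=f^*\xi_1$, two contact structures on the single manifold $Y_0\in\mathscr D$ with symplectomorphic symplectizations and homotopic plane fields, are contactomorphic.

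The argument now splits according to tightness. By \cref{cor:HM-intro}, $\widecheck c_{\mathrm{HM}}(\xi_0;\F)$ vanishes if and only if $\widecheck c_{\mathrm{HM}}(\xi_0';\F)$ vanishes. Hypothesis (i), applied to $Y_0$, then shows that $\xi_0$ is tight if and only if $\xi_0'$ is tight. This excludes the mixed tight--overtwisted case.
\begin{itemize}
\item If both structures are tight, hypothesis (ii) gives a contactomorphism $(Y_0,\xi_0)\congcont(Y_0,\xi_0')$.
\item If both are overtwisted, Eliashberg's classification gives an isotopy, and hence a contactomorphism, since they are homotopic as oriented plane fields.
\end{itemize}
In either case, composing with $f$ gives $(Y_0,\xi_0)\congcont(Y_1,\xi_1)$.

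The one point that needs care is the use of (i) on a single manifold. This is why I transport everything to $Y_0$ via $f$ first: only the diffeomorphism type matters, and $Y_1\cong Y_0$ lies in $\mathscr D$ anyway. One must also check that pulling back by $f$ preserves tightness and the monopole invariant's vanishing. Since $f$ is a contactomorphism onto $(Y_1,\xi_1)$, this is automatic, and in any case \cref{cor:HM-intro} is applied directly to the composed symplectomorphism. Beyond that the argument is bookkeeping, and I expect no real obstacle.
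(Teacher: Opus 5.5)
Your proposal is correct and follows essentially the same route as the paper: formal rigidity transports everything to the single manifold $Y_0$, then \cref{cor:HM-intro} with hypothesis (i) excludes the mixed tight--overtwisted case, (ii) handles the tight--tight case, and Eliashberg plus Gray stability handles the overtwisted--overtwisted case. Your explicit composition of $\Phi$ with the strict lift of $f$ from \cref{lem:strict-lift} simply spells out the step the paper compresses into ``replacing $(Y_1,\xi_1)$ by $(Y_0,f^*\xi_1)$.''
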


\begin{proof}
A contactomorphism lifts to a strict exact symplectomorphism by \cref{lem:strict-lift}.  Conversely, suppose that
\[
  S(Y_0,\xi_0)\congsymp S(Y_1,\xi_1),
  \qquad Y_0,Y_1\in\mathscr D.
\]
By \cref{thm:formal-rigidity}, there is an orientation-preserving diffeomorphism $f\colon Y_0\to Y_1$ such that $\xi_0$ and $f^*\xi_1$ are homotopic as oriented plane fields.  Replacing $(Y_1,\xi_1)$ by $(Y_0,f^*\xi_1)$, we may assume that both structures lie on the same manifold and are homotopic as oriented plane fields.

If both structures are overtwisted, Eliashberg's classification \cite{Eliashberg1989} and Gray stability give a contactomorphism.  If exactly one is tight, assumption~\textup{(i)} says that one monopole contact invariant is nonzero and the other is zero, contradicting \cref{cor:HM-intro}.  If both are tight, assumption~\textup{(ii)} gives a contactomorphism.
\end{proof}

As introduced in the introduction, for a class $\mathscr D$ of closed connected oriented three-manifolds, we write $\mathscr D^{\#}$ for the class of all nonempty finite connected sums of members of $\mathscr D$.

\begin{lemma}[Connected-sum formulas]\label{lem:connected-sum-formulas}

Let $Y_1,\ldots,Y_k$ be closed connected oriented three-manifolds, and set
\[
  Y
  =
  Y_1\mathbin{\#}\cdots\mathbin{\#}Y_k.
\]
Then there is a canonical connected-sum bijection
\begin{equation}\label{eq:spinc-connected-sum}
  \SpinC(Y_1)\times\cdots\times\SpinC(Y_k)
  \longrightarrow
  \SpinC(Y),
  \qquad
  (\mathfrak s_1,\ldots,\mathfrak s_k)
  \longmapsto
  \mathfrak s_1\mathbin{\#}\cdots\mathbin{\#}\mathfrak s_k.
\end{equation}
For $\mathfrak s_j\in\SpinC(Y_j)$, the Heegaard Floer K\"unneth theorem gives
\begin{equation}\label{eq:HF-connected-sum}
  \widehat{HF}\bigl(Y,
    \mathfrak s_1\mathbin{\#}\cdots\mathbin{\#}\mathfrak s_k;\F\bigr)
  \cong
  \bigotimes_{j=1}^k\widehat{HF}(Y_j,\mathfrak s_j;\F).
\end{equation}
In particular, a finite connected sum of $L$-spaces is an $L$-space.

If $\xi_j$ is a positive cooriented contact structure on $Y_j$, then
\begin{equation}\label{eq:contact-spinc-connected-sum}
  \mathfrak s_{\xi_1\#\cdots\#\xi_k}
  =
  \mathfrak s_{\xi_1}\mathbin{\#}\cdots\mathbin{\#}\mathfrak s_{\xi_k},
\end{equation}
and, under the K\"unneth isomorphism for the orientation-reversed manifolds,
\begin{equation}\label{eq:contact-class-connected-sum}
  \widehat c_{\mathrm{HF}}(\xi_1\#\cdots\#\xi_k;\F)
  =
  \bigotimes_{j=1}^k
  \widehat c_{\mathrm{HF}}(\xi_j;\F).
\end{equation}
\end{lemma}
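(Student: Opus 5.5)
This lemma collects standard facts, so the plan is to reduce to the case $k=2$ and cite the appropriate sources, checking only that the conventions match. Induction on $k$ handles the general case. Connected sum is associative up to orientation-preserving diffeomorphism. Each of the four assertions is compatible with iterating a two-factor statement, since all identifications are canonical and the tensor product is associative.

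\emph{The $\SpinC$ bijection \eqref{eq:spinc-connected-sum}.} I would use the Mayer--Vietoris decomposition $Y=(Y_1\setminus B^3)\cup_{S^2}(Y_2\setminus B^3)$. Since $S^2$ and $B^3$ carry unique $\SpinC$ structures and $H^1(S^2)=H^2(S^2)=0$, restriction gives $H^2(Y)\cong H^2(Y_1)\oplus H^2(Y_2)$. Hence $\SpinC$ structures on the punctured pieces glue uniquely. Equivariance under this splitting makes the map a bijection of torsors.

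\emph{K\"unneth and $L$-spaces.} For \eqref{eq:HF-connected-sum} I would invoke the Ozsv\'ath--Szab\'o connected-sum theorem \cite{OzsvathSzabo2004Properties}. Over $\F$ it gives $\widehat{CF}(Y_1\#Y_2,\mathfrak s_1\#\mathfrak s_2)\simeq\widehat{CF}(Y_1,\mathfrak s_1)\otimes\widehat{CF}(Y_2,\mathfrak s_2)$. The algebraic K\"unneth formula over a field then passes this to homology. For the $L$-space claim, $H_1(Y)\cong H_1(Y_1)\oplus H_1(Y_2)$ is finite. Summing the dimensions over $\SpinC(Y)$ via the bijection gives
\[
\dim\widehat{HF}(Y)=\dim\widehat{HF}(Y_1)\cdot\dim\widehat{HF}(Y_2)=|H_1(Y_1)||H_1(Y_2)|=|H_1(Y)|.
\]

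\emph{Contact statements.} For \eqref{eq:contact-spinc-connected-sum}, the contact connected sum $\xi_1\#\xi_2$ is defined in a neighborhood of the connecting sphere by gluing standard tight balls. Away from that neighborhood it agrees with $\xi_j$ on each punctured summand. The canonical $\SpinC$ structure $\mathfrak s_\xi$ is determined by the oriented plane field, so its restrictions to the punctured pieces are $\mathfrak s_{\xi_j}$ restricted. The bijection above then forces the identity. For \eqref{eq:contact-class-connected-sum} I would cite the Ozsv\'ath--Szab\'o product formula \cite{OzsvathSzabo2005} (see also \cite{DingGeiges2007}). It is proved using open books: a Murasugi sum (plumbing) of the open books for $\xi_1$ and $\xi_2$ supports $\xi_1\#\xi_2$. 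The Heegaard diagram for $-Y$ is then a connected sum of the diagrams for $-Y_1$ and $-Y_2$, and the contact generator is the tensor product of the two contact generators. The K\"unneth isomorphism is applied to $-Y=(-Y_1)\#(-Y_2)$ in the $\SpinC$ structure $\mathfrak s_{\xi_1}\#\mathfrak s_{\xi_2}$.

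The main obstacle is a convention check rather than a new argument. The chain-level K\"unneth map must agree with the identification used in the product formula for contact classes, and that identification must be compatible with orientation reversal. I would verify this by working directly with the connected-sum Heegaard diagram adapted to the plumbed open book. There the contact generator is visibly the tensor product of the two contact generators, which fixes the identification on homology without ambiguity.
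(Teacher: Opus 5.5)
Your proposal is correct and follows essentially the paper's route: restriction and gluing for the $\SpinC$ bijection, the Ozsv\'ath--Szab\'o K\"unneth theorem plus a dimension count for the $L$-space claim, agreement of plane fields off the neck, and the Ozsv\'ath--Szab\'o open-book product formula for the contact classes. One small precision is needed: the open book you want is the boundary connected sum of the pages, which is the bigon case of a Murasugi sum and, as in the paper, gives binding $K_1\#K_2$ when the bindings are connected. A genuine plumbing along a square would not give a Heegaard diagram that is literally the connected sum of the two diagrams.
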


\begin{proof}
The bijection \eqref{eq:spinc-connected-sum} is obtained by restricting $\SpinC$ structures to the punctured summands and gluing across the separating spheres; restriction is its inverse.  Formula \eqref{eq:HF-connected-sum} is the hat version of the K\"unneth theorem \cite[Theorem~1.5]{OzsvathSzabo2004Properties}.  If every $Y_j$ is an $L$-space, each factor on the right is one-dimensional in every $\SpinC$ structure, while
\[
  |H_1(Y;\Z)|=\prod_{j=1}^k|H_1(Y_j;\Z)|,
\]
which proves the $L$-space assertion.

The contact connected sum agrees with the original contact plane fields outside the chosen Darboux balls and the standard neck, which gives \eqref{eq:contact-spinc-connected-sum}.  To obtain \eqref{eq:contact-class-connected-sum}, choose supporting open books with connected bindings.  The boundary connected sum of their pages supports the contact connected sum, and the connected-sum identification of the filtered knot Floer complexes carries the distinguished contact generator to the tensor product of the distinguished generators \cite[Proposition~2.1]{OzsvathSzabo2005}.  Iteration proves the formula for arbitrary $k$.  Working over $\F=\Z/2\Z$ removes the sign ambiguity.
\end{proof}

\begin{lemma}[Colin's decomposition for a fixed connected sum]\label{lem:colin-decomposition}
Let $Y_1,\ldots,Y_k$ be closed connected oriented three-manifolds, and fix a connected-sum decomposition
\[
  Y=Y_1\mathbin{\#}\cdots\mathbin{\#}Y_k.
\]
If $\xi$ is a tight contact structure on $Y$, then there are tight contact structures $\xi_j$ on $Y_j$, unique up to contact isotopy, such that
\[
  \xi\simeq\xi_1\#\cdots\#\xi_k
\]
through positive contact structures.  Conversely, the contact connected sum of tight contact structures is tight.
\end{lemma}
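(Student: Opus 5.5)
This lemma is essentially Colin's theorem \cite{Colin1997}, together with the Eliashberg sphere-splitting argument as presented by Ding--Geiges \cite{DingGeiges2007}. The plan is to reduce to the case $k=2$ and then induct on $k$. The fixed decomposition of $Y$ is realized by $k-1$ disjoint separating embedded spheres. Cutting along one sphere $S$ splits $Y$ as $Y'\# Y''$, where $Y'$ is a smaller connected sum of the $Y_j$. Iterating over the spheres then reduces everything to a single sphere.

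\textbf{Existence of the splitting.} Let $S\subset Y$ be the separating sphere. Since $\xi$ is tight, I would first isotope $S$ so that it is convex, with a single dividing curve; Giroux's criterion forces exactly one dividing curve on a convex sphere in a tight manifold. By Giroux flexibility and the uniqueness of the tight germ near such a sphere, a neighborhood of $S$ is contactomorphic to a neighborhood of the equatorial sphere in the standard tight $(S^3,\xi_{\mathrm{std}})$. The main case I expect to handle is cutting $Y$ along $S$ and capping each of the two boundary spheres with the standard tight ball. Eliashberg's uniqueness of tight structures on $B^3$ with fixed boundary germ makes these caps unique up to isotopy rel boundary. This produces contact structures $\xi'$ on $Y'$ and $\xi''$ on $Y''$ whose contact connected sum recovers $\xi$ up to isotopy. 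Both $\xi'$ and $\xi''$ are tight, since each is $\xi$ on the complement of a ball glued to a tight ball along a convex sphere. Justifying that last point directly needs care, so I would simply cite Colin's gluing result here as well.

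\textbf{Uniqueness.} This is where I expect the main obstacle. Suppose two splittings arise from two different spheres $S_0,S_1$ that are smoothly isotopic; here I would use that the decomposition is fixed, together with the standard uniqueness of prime decompositions up to isotopy for the spheres chosen. The goal is to show that the induced summand structures are contact isotopic. I would try to reduce this to Colin's theorem that isotopic convex spheres in a tight manifold are contact isotopic, i.e.\ isotopic through convex spheres, after which the cut-and-cap construction is carried through the isotopy. If the relevant spheres are not isotopic, the paper should invoke Colin's statement directly for the chosen decomposition. Finally, uniqueness of the Darboux ball up to isotopy, which holds because $\mathrm{Diff}_0$ acts transitively on contact balls, shows that $\xi_1\#\cdots\#\xi_k$ is well defined up to isotopy.

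\textbf{Converse.} Tightness of a contact connected sum of tight structures is Colin's gluing theorem along the convex neck sphere \cite{Colin1997}, which I would simply cite. Gray stability converts the resulting path of positive contact structures into the isotopy $\simeq$ asserted in the lemma.
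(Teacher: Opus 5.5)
Your overall route is the paper's. The paper proves this lemma purely by citing Colin's theorem together with the fixed-decomposition formulation of Ding--Geiges. Your existence and converse paragraphs are consistent with that: a convex neck sphere with connected dividing set, cut and cap with Eliashberg's unique tight ball, Colin's gluing theorem, and Gray stability. Two small corrections there. First, tightness of the summands is elementary: an overtwisted disc in $(Y_j,\xi_j)$ can be moved off the capping Darboux ball by a contact isotopy, and it then lies in $(Y,\xi)$. Second, well-definedness of the contact connected sum rests on any two Darboux balls being \emph{contact} isotopic, not on a statement about $\mathrm{Diff}_0$.

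The uniqueness paragraph, however, misplaces the difficulty and does not prove uniqueness up to isotopy. With the decomposition fixed (and $k=2$ after your reduction), suppose $\xi_1\#\xi_2\simeq\xi_1'\#\xi_2'$, with both sums formed along the same neck sphere $S$, and let $\phi_t$ be the Gray isotopy. Then $\phi_1(S)$ is a standard convex sphere for $\xi_1'\#\xi_2'$ and is automatically smoothly isotopic to $S$ through $\phi_t(S)$. So there is no ``non-isotopic spheres'' case. Your appeal to uniqueness of prime-decomposition spheres up to isotopy is unnecessary and also false: Kneser--Milnor gives uniqueness of the prime factors up to diffeomorphism, not of the decomposing spheres up to isotopy, and the $Y_j$ need not be prime.

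The real issue is this. Colin's sphere-isotopy theorem gives a contact isotopy $\psi_t$ of $\xi_1'\#\xi_2'$ with $\psi_1\phi_1(S)=S$. Hence $\Theta=\psi_1\circ\phi_1$ is a contactomorphism $(Y,\xi_1\#\xi_2)\to(Y,\xi_1'\#\xi_2')$ that preserves $S$ and is isotopic to the identity of $Y$. Capping off gives contactomorphisms $(Y_j,\xi_j)\to(Y_j,\xi_j')$, i.e.\ uniqueness only up to contactomorphism. To upgrade to contact isotopy you need, for instance, that the capped-off restriction of $\Theta$ is isotopic to the identity of $Y_j$. The isotopy from the identity to $\Theta$ moves $S$, so ``carrying the cut-and-cap through the isotopy'' produces exactly this monodromy in the mapping class group of $Y_j$ without showing it is trivial. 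That step has to come from the cited fixed-decomposition result, not from your sketch. Only the existence part of the lemma is used later in the paper, so this gap does not affect the main results.
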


\begin{proof}
This is Colin's connected-sum theorem \cite{Colin1997}; see also the
formulation for a fixed connected-sum decomposition in
\cite[Sections~1--2]{DingGeiges2007}.
\end{proof}

\begin{lemma}[Adding a strongly fillable summand]
\label{lem:add-fillable-summand}
Let $(A,\zeta)$ and $(B,\eta)$ be closed connected positive cooriented contact three-manifolds.  If $(B,\eta)$ is strongly symplectically fillable, then there is a strong symplectic cobordism
\[
  (A,\zeta)\rightsquigarrow
  (A\mathbin{\#}B,\zeta\#\eta).
\]
\end{lemma}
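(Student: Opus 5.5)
The cobordism is built from the trivial symplectization piece over $(A,\zeta)$, disjointly united with a strong filling of $(B,\eta)$, and then joined by a single Weinstein $1$-handle.

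Let $(X,\omega_X)$ be a strong symplectic filling of $(B,\eta)$. Near $\partial X$ there is an outward-pointing Liouville vector field inducing $\eta$. Choose a contact form $\alpha_A$ for $\zeta$ and take the trivial cobordism
\[
  \bigl([0,1]\times A,\ d(e^t\alpha_A)\bigr).
\]
It is Liouville, with concave end $\{0\}\times A$ and convex end $\{1\}\times A$. The disjoint union
\[
  W_0=\bigl([0,1]\times A\bigr)\sqcup X
\]
is then a strong symplectic cobordism. Its negative boundary is $(A,\zeta)$, since $X$ has empty negative end. Its positive boundary is the disconnected contact manifold
\[
  (A,\zeta)\sqcup(B,\eta).
\]

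Next I attach a Weinstein $1$-handle to the convex boundary of $W_0$. The attaching sphere is a pair of points, one in $\{1\}\times A$ and one in $\partial X$. Weinstein's handle attachment needs only a Liouville vector field defined in a collar of the convex boundary, near the attaching region, and such collars exist on both pieces. The construction therefore extends the symplectic form over the handle, keeps the strong convexity of the new positive boundary, and leaves the negative end untouched.

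The contact effect of a $1$-handle is standard: the new convex boundary is the contact connected sum of the two components. For a $1$-handle joining two different components, this is exactly how the contact connected sum is defined, up to contactomorphism isotopic to the natural smooth identification. The resulting cobordism
\[
  W=W_0\cup(\text{$1$-handle})
\]
is a strong symplectic cobordism
\[
  (A,\zeta)\rightsquigarrow(A\mathbin{\#}B,\zeta\#\eta).
\]

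The main point needing care is that $W$ is strong but not exact: $\omega_X$ need not be exact, and the Liouville field exists only near $\partial X$. I would check that Weinstein's local model requires a Liouville field only in a neighborhood of the attaching points, which is true. I would also check that the definition of strong cobordism in use, matching Echeverria's hypotheses, allows a non-exact symplectic form with Liouville fields only near the boundary. Finally, I would confirm that the boundary contact structure obtained is the connected sum $\zeta\#\eta$ in the sense used in \cref{lem:connected-sum-formulas}, which follows from the identification of Weinstein $1$-handle surgery with contact connected sum (e.g.\ Weinstein, Geiges).
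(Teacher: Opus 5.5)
Your proposal is correct and follows essentially the same route as the paper. Both take a trivial symplectization piece over $(A,\zeta)$ disjoint from a strong filling of $(B,\eta)$ and join the two convex boundary components by a Weinstein $1$-handle, which produces $\zeta\#\eta$ on the positive end and leaves the negative end $(A,\zeta)$ unchanged.
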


\begin{proof}
Let $(X_B,\omega_B)$ be a strong filling of $(B,\eta)$, and take a compact piece of the symplectization of $(A,\zeta)$ as a trivial strong cobordism from $(A,\zeta)$ to itself.  Regard the disjoint union of these two symplectic manifolds as a strong cobordism whose negative boundary is $(A,\zeta)$ and whose positive boundary is
\[
  (A,\zeta)\sqcup(B,\eta).
\]
Attach a Weinstein one-handle along Darboux balls, one in each component of the positive boundary.  The negative boundary is unchanged, while the positive boundary becomes the contact connected sum
\[
  (A\mathbin{\#}B,\zeta\#\eta).
\]
The resulting cobordism is strong symplectic.
\end{proof}

\begin{proposition}[Mixed connected-sum closure of the rigidity criterion]
\label{prop:connected-sum-closure}
Let $\mathscr A$ be a class of closed connected oriented $L$-spaces, and let $\mathscr V$ be a class of closed connected oriented three-manifolds.  Assume that the following conditions hold.

For every $Y\in\mathscr A$:
\begin{enumerate}[label=\textup{(A\arabic*)},leftmargin=2.9em]
  \item a positive cooriented contact structure on $Y$ is tight if and only if its hat Heegaard Floer contact invariant is nonzero;
  \item two tight positive cooriented contact structures on $Y$ with isomorphic induced $\SpinC$ structures are contact isotopic.
\end{enumerate}
For every $Y\in\mathscr V$:
\begin{enumerate}[label=\textup{(V\arabic*)},leftmargin=2.9em]
  \item every tight positive cooriented contact structure on $Y$ is strongly symplectically fillable;
  \item any two tight positive cooriented contact structures on $Y$ are contactomorphic.
\end{enumerate}

Then, for every $Y\in(\mathscr A\cup\mathscr V)^{\#}$ and every positive cooriented contact structure $\xi$ on $Y$,
\begin{equation}\label{eq:mixed-tight-HM}
  \xi\text{ is tight}
  \quad\Longleftrightarrow\quad
  \widecheck c_{\mathrm{HM}}(\xi;\F)\ne0.
\end{equation}
Moreover, two tight positive cooriented contact structures on $Y$ that are homotopic as oriented plane fields are contactomorphic.  Consequently, the symplectization question has an affirmative answer for arbitrary positive cooriented contact structures on manifolds in $(\mathscr A\cup\mathscr V)^{\#}$.
\end{proposition}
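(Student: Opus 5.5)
Write $Y=A_1\#\cdots\#A_p\#B_1\#\cdots\#B_q$ with $A_i\in\mathscr A$ and $B_j\in\mathscr V$. The plan is to verify hypotheses (i) and (ii) of \cref{prop:HM-criterion} for this fixed decomposition, so that the final sentence follows directly from that proposition.

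\emph{The equivalence \eqref{eq:mixed-tight-HM}.} If $\xi$ is overtwisted, then $\widecheck c_{\mathrm{HM}}(\xi;\F)=0$ by \eqref{eq:HM-fillability-overtwisted}. Conversely, suppose $\xi$ is tight. By \cref{lem:colin-decomposition}, $\xi$ is isotopic to $\zeta_1\#\cdots\#\zeta_p\#\eta_1\#\cdots\#\eta_q$ with every summand tight.
\begin{itemize}
\item Set $A=A_1\#\cdots\#A_p$ and $\zeta=\zeta_1\#\cdots\#\zeta_p$. By (A1), each $\widehat c_{\mathrm{HF}}(\zeta_i;\F)\neq0$. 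By \cref{lem:connected-sum-formulas}, $\widehat c_{\mathrm{HF}}(\zeta;\F)$ is the tensor product of these classes and is therefore nonzero.
\item Since $A$ is an $L$-space, \cref{lem:comparison} gives $\widecheck c_{\mathrm{HM}}(\zeta;\F)\neq0$.
\item By (V1), each $\eta_j$ is strongly fillable. Applying \cref{lem:add-fillable-summand} $q$ times and composing, we obtain a strong cobordism $(A,\zeta)\rightsquigarrow(Y,\xi)$.
\item Echeverria's naturality \eqref{eq:echeverria} maps $\widecheck c_{\mathrm{HM}}(\xi;\F)$ to $\widecheck c_{\mathrm{HM}}(\zeta;\F)\neq0$, so $\widecheck c_{\mathrm{HM}}(\xi;\F)\ne0$.
\end{itemize}
If $p=0$, the tight structure $\eta_1\#\cdots\#\eta_q$ is strongly fillable, since the boundary connected sum of fillings via Weinstein one-handles is a filling. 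Then \eqref{eq:HM-fillability-overtwisted} applies directly. In the degenerate case $Y=S^3$, tightness means the standard structure, which is fillable.

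\emph{Uniqueness.} Let $\xi,\xi'$ be tight on $Y$ and homotopic as plane fields. Decompose both along the same spheres using \cref{lem:colin-decomposition}.
\begin{itemize}
\item Homotopic plane fields induce the same $\SpinC$ structure. By the bijection \eqref{eq:spinc-connected-sum} and \eqref{eq:contact-spinc-connected-sum}, restriction shows $\mathfrak s_{\zeta_i}=\mathfrak s_{\zeta_i'}$ for each $i$. Then (A2) gives that $\zeta_i$ and $\zeta_i'$ are isotopic.
\item By (V2), each $\eta_j$ is contactomorphic to $\eta_j'$, say via $g_j$.
\item The contact connected sum is well defined up to isotopy, independent of the choice of Darboux balls. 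Hence the identity on the $A_i$-summands and the maps $g_j$, adjusted near the balls, assemble into a contactomorphism $\xi\cong\xi'$.
\end{itemize}
This assembly yields a diffeomorphism of $Y$ that is the identity on the $A$-part and is $g_j$ on the $B_j$-parts. This is legitimate because contactomorphisms can be isotoped to fix a Darboux ball, using the connectedness of the space of Darboux balls.

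\emph{Main obstacle.} The delicate point is the uniqueness step. The $\SpinC$ structure does not remember everything about the homotopy class, but (A2) only requires the $\SpinC$ structure. Meanwhile (V2) is up to contactomorphism rather than isotopy, so the summand-wise maps must be glued as described. I expect the required care to lie in the well-definedness of contact connected sum under contactomorphisms of summands, which I would justify via Colin's theorem and the Ding--Geiges formulation.

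Finally, (i) and (ii) of \cref{prop:HM-criterion} hold on $(\mathscr A\cup\mathscr V)^{\#}$. Since $Y_0$ and $Y_1$ are related by the diffeomorphism from \cref{thm:formal-rigidity}, the argument is run on a single manifold, and the conclusion follows.
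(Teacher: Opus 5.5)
Your proposal is correct and follows the paper's proof essentially step by step. Both arguments use Colin's decomposition along a fixed presentation. On the $\mathscr A$-part, both combine (A1) with the K\"unneth contact-class formula and the $L$-space comparison \cref{lem:comparison}. The $\mathscr V$-summands are then added through a strong cobordism built from fillings and Weinstein one-handles, with Echeverria naturality carrying nonvanishing across it. Uniqueness comes from the $\SpinC$-restriction argument with (A2) and (V2), and the final step invokes \cref{prop:HM-criterion}.

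There are two cosmetic differences. You iterate \cref{lem:add-fillable-summand} $q$ times, whereas the paper first fills the connected sum of all $\mathscr V$-summands and applies the lemma once. You also spell out the Darboux-ball isotopy needed to glue the summandwise contactomorphisms, which the paper subsumes under well-definedness of the contact connected sum.
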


\begin{proof}
Fix a connected-sum presentation
\begin{equation}\label{eq:mixed-presentation}
  Y
  =A_1\mathbin{\#}\cdots\mathbin{\#}A_p
   \mathbin{\#}V_1\mathbin{\#}\cdots\mathbin{\#}V_q,
  \qquad
  A_i\in\mathscr A,
  \quad
  V_j\in\mathscr V,
\end{equation}
where $p,q\ge0$ and $p+q\ge1$.  Empty strings of factors are omitted.

We first prove \eqref{eq:mixed-tight-HM}.  Suppose that $\xi$ is tight.  By \cref{lem:colin-decomposition}, there are tight contact structures $\xi_i$ on $A_i$ and $\eta_j$ on $V_j$ such that
\begin{equation}\label{eq:mixed-contact-decomposition}
  \xi\simeq
  \xi_1\#\cdots\#\xi_p\#
  \eta_1\#\cdots\#\eta_q
\end{equation}
through positive contact structures.

Assume first that $p>0$, and set
\[
  A=A_1\mathbin{\#}\cdots\mathbin{\#}A_p,
  \qquad
  \xi_A=\xi_1\#\cdots\#\xi_p.
\]
By assumption~\textup{(A1)}, every $\widehat c_{\mathrm{HF}}(\xi_i;\F)$ is nonzero.  The connected-sum formula \eqref{eq:contact-class-connected-sum} therefore gives
\[
  \widehat c_{\mathrm{HF}}(\xi_A;\F)\ne0.
\]
The manifold $A$ is an $L$-space by \cref{lem:connected-sum-formulas}, so \cref{lem:comparison} implies
\begin{equation}\label{eq:A-part-HM-nonzero}
  \widecheck c_{\mathrm{HM}}(\xi_A;\F)\ne0.
\end{equation}

If $q=0$, equations \eqref{eq:mixed-contact-decomposition} and \eqref{eq:A-part-HM-nonzero} already prove that $\widecheck c_{\mathrm{HM}}(\xi;\F)\ne0$.  Suppose therefore that $q>0$, and put
\[
  B=V_1\mathbin{\#}\cdots\mathbin{\#}V_q,
  \qquad
  \eta_B=\eta_1\#\cdots\#\eta_q.
\]
Each $(V_j,\eta_j)$ is strongly fillable by assumption~\textup{(V1)}.  Taking the disjoint union of strong fillings and attaching Weinstein one-handles shows that $(B,\eta_B)$ is strongly fillable.  By \cref{lem:add-fillable-summand}, there is a strong symplectic cobordism
\[
  W\colon(A,\xi_A)\rightsquigarrow
  (A\mathbin{\#}B,\xi_A\#\eta_B).
\]
Echeverria's naturality gives
\[
  \widecheck{HM}_{\bullet}(W^\dagger,\mathfrak s_\omega)
  \bigl(\widecheck c_{\mathrm{HM}}(\xi_A\#\eta_B;\F)\bigr)
  =\widecheck c_{\mathrm{HM}}(\xi_A;\F).
\]
The right-hand side is nonzero by \eqref{eq:A-part-HM-nonzero}.  Hence the contact invariant of $\xi_A\#\eta_B$ is nonzero.  Since \eqref{eq:mixed-contact-decomposition} identifies $\xi$ with this connected-sum structure up to contact isotopy, we obtain
\[
  \widecheck c_{\mathrm{HM}}(\xi;\F)\ne0.
\]

If $p=0$, all summands in \eqref{eq:mixed-presentation} belong to $\mathscr V$.  Assumption~\textup{(V1)} and the Weinstein one-handle construction show directly that $(Y,\xi)$ is strongly fillable.  Its monopole contact invariant is therefore nonzero by \eqref{eq:HM-fillability-overtwisted}.  We have now proved nonvanishing for every tight $\xi$.  Conversely, if $\xi$ is overtwisted, then \eqref{eq:HM-fillability-overtwisted} gives
\[
  \widecheck c_{\mathrm{HM}}(\xi;\F)=0.
\]
This proves \eqref{eq:mixed-tight-HM}.

It remains to prove the uniqueness assertion.  Let $\xi^0$ and $\xi^1$ be tight contact structures on $Y$ that are homotopic as oriented plane fields.  Applying \cref{lem:colin-decomposition} to the fixed presentation \eqref{eq:mixed-presentation}, write
\[
  \xi^a\simeq
  \xi_1^a\#\cdots\#\xi_p^a\#
  \eta_1^a\#\cdots\#\eta_q^a,
  \qquad a=0,1,
\]
where all summand contact structures are tight.  Plane-field homotopy implies
\[
  \mathfrak s_{\xi^0}\cong\mathfrak s_{\xi^1}.
\]
By \eqref{eq:contact-spinc-connected-sum} and the injectivity of the connected-sum bijection \eqref{eq:spinc-connected-sum},
\[
  \mathfrak s_{\xi_i^0}\cong\mathfrak s_{\xi_i^1}
  \quad(1\le i\le p),
  \qquad
  \mathfrak s_{\eta_j^0}\cong\mathfrak s_{\eta_j^1}
  \quad(1\le j\le q).
\]
Assumption~\textup{(A2)} gives contact isotopies $\xi_i^0\simeq\xi_i^1$ on the $\mathscr A$-factors, while assumption~\textup{(V2)} gives contactomorphisms
\[
  (V_j,\eta_j^0)\congcont(V_j,\eta_j^1)
\]
on the $\mathscr V$-factors.  By the well-definedness of contact connected sum, taking the connected sum of these factorwise contactomorphisms gives
\[
  (Y,\xi^0)\congcont(Y,\xi^1).
\]
Thus both hypotheses of \cref{prop:HM-criterion} hold for $(\mathscr A\cup\mathscr V)^{\#}$, and that proposition proves the final assertion.
\end{proof}

\section{Strong fillability and further classes}\label{sec:strong}

\subsection{Strong fillability as a symplectization invariant}

Recall that a \emph{strong symplectic filling} of $(Y,\xi)$ is a compact symplectic four-manifold $(X,\omega)$ with $\partial X=Y$ for which a Liouville vector field is defined near the boundary, points outward, and induces $\xi$.  We use the standard gluing of a strong filling to a strong cobordism after adjusting their Liouville collars; see, for example, \cite[Chapter~5]{Geiges2008}.

\begin{proof}[Proof of \cref{cor:strong-fillability-intro}]
Let $W_{01}$ and $W_{10}$ be the Liouville cobordisms supplied by \cref{thm:bidirectional-intro}.  If $(X_0,\omega_0)$ strongly fills $(Y_0,\xi_0)$, then gluing $X_0$ to $W_{01}$ along $(Y_0,\xi_0)$ gives a strong filling of $(Y_1,\xi_1)$.  The standard collar adjustment makes the boundary contact forms agree before gluing.  The reverse cobordism $W_{10}$ proves the converse.
\end{proof}

\begin{theorem}[Strong-fillability criterion and connected-sum closure]
\label{thm:strong-criterion}
Let $\mathscr D$ be a class of closed connected oriented three-manifolds such that, for every $Y\in\mathscr D$:
\begin{enumerate}[label=\textup{(\roman*)},leftmargin=2.7em]
  \item every tight positive cooriented contact structure on $Y$ is strongly symplectically fillable;
  \item two tight positive cooriented contact structures on $Y$ that are homotopic as oriented plane fields are contactomorphic.
\end{enumerate}
Then the symplectization question has an affirmative answer for arbitrary positive cooriented contact structures on manifolds in $\mathscr D$.

Moreover, suppose that assumption~\textup{(ii)} is strengthened to:
\begin{enumerate}[label=\textup{(\roman*${}'$)},leftmargin=2.7em,start=2]
  \item any two tight positive cooriented contact structures on $Y$ are contactomorphic.
\end{enumerate}
Then every tight positive cooriented contact structure on every manifold in $\mathscr D^{\#}$ is strongly symplectically fillable, any two such tight contact structures are contactomorphic, and the symplectization question has an affirmative answer for arbitrary positive cooriented contact structures on manifolds in $\mathscr D^{\#}$.
\end{theorem}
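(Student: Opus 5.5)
The first assertion will follow from \cref{prop:HM-criterion} applied to $\mathscr D$, so I only need to check its hypothesis (i) on each $Y\in\mathscr D$; hypothesis (ii) is the present assumption (ii) verbatim. If $\xi$ is tight, assumption (i) makes $(Y,\xi)$ strongly fillable, and then \eqref{eq:HM-fillability-overtwisted} gives $\widecheck c_{\mathrm{HM}}(\xi;\F)\ne0$. If $\xi$ is overtwisted, the same display gives vanishing. Hence tightness is detected by nonvanishing of the monopole invariant, and \cref{prop:HM-criterion} yields the affirmative answer on $\mathscr D$. Alternatively, one can avoid Floer theory and argue directly with \cref{cor:strong-fillability-intro}, since overtwisted structures are never strongly fillable. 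This excludes the tight--overtwisted case, while \cref{thm:formal-rigidity}, Eliashberg's classification and (ii) handle the remaining cases.

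For the connected-sum statement under (ii$'$), I will apply \cref{prop:connected-sum-closure} with $\mathscr A=\emptyset$ and $\mathscr V=\mathscr D$. Condition (V1) is assumption (i) and (V2) is (ii$'$). The conditions (A1) and (A2) are vacuous. That proposition then gives the affirmative answer on $\mathscr D^{\#}$. It also gives contactomorphism of tight structures that are homotopic as plane fields. The statement, however, asks for contactomorphism of \emph{any} two tight structures, so I will redo the uniqueness step directly. Fix a presentation $Y=V_1\#\cdots\#V_q$ with $V_j\in\mathscr D$, and let $\xi^0,\xi^1$ be tight structures on $Y$. By \cref{lem:colin-decomposition}, each $\xi^a$ is isotopic to $\eta^a_1\#\cdots\#\eta^a_q$ with every $\eta^a_j$ tight. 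Assumption (ii$'$) gives contactomorphisms $(V_j,\eta^0_j)\cong(V_j,\eta^1_j)$ with no $\SpinC$ matching needed. Well-definedness of the contact connected sum, after isotoping the contactomorphisms to respect the Darboux balls, then gives $(Y,\xi^0)\congcont(Y,\xi^1)$.

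Strong fillability of every tight $\xi$ on $Y\in\mathscr D^{\#}$ comes from the same decomposition. Each $(V_j,\eta_j)$ has a strong filling by (i). Taking the disjoint union of these fillings and attaching $q-1$ Weinstein one-handles along Darboux balls gives a strong filling of $\eta_1\#\cdots\#\eta_q$. This is the construction already used in the proof of \cref{prop:connected-sum-closure}. Strong fillability is invariant under contact isotopy by Gray stability, so $\xi$ itself is strongly fillable.

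I expect the main obstacle to be the uniqueness step on $\mathscr D^{\#}$. There one has to make sure that a contactomorphism of each summand can be taken, up to isotopy, to fix a Darboux ball. Then the factorwise maps glue across the necks. I plan to justify this with the standard fact that any two Darboux balls in a connected contact manifold are contact isotopic, and to cite \cite{DingGeiges2007} for well-definedness of the contact connected sum.
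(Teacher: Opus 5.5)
Your proposal is correct and is essentially the paper's proof. The paper rules out the tight--overtwisted case primarily via \cref{cor:strong-fillability-intro} and mentions the monopole route through \cref{cor:HM-intro} only as an alternative, the reverse of your emphasis; on $\mathscr D^{\#}$ it proves fillability and uniqueness exactly as you do, via \cref{lem:colin-decomposition}, Weinstein one-handles and (ii$'$), and then simply applies the first assertion to $\mathscr D^{\#}$ instead of invoking \cref{prop:connected-sum-closure} with $\mathscr A=\emptyset$, while your remark about isotoping the factor contactomorphisms to fix Darboux balls supplies a detail the paper leaves implicit.
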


\begin{proof}[Proof of \cref{thm:strong-criterion}]
We first prove the general rigidity criterion.

Let $Y_0,Y_1\in\mathscr D$, and suppose that
\[
  S(Y_0,\xi_0)
  \cong_{\mathrm{symp}}
  S(Y_1,\xi_1).
\]
By \cref{thm:formal-rigidity}, there is an orientation-preserving
diffeomorphism
\[
  f\colon Y_0\longrightarrow Y_1
\]
such that
\[
  \xi_0\simeq f^*\xi_1
\]
as oriented plane fields.  Pulling $\xi_1$ back by $f$, it suffices to compare $\xi_0$ and $f^*\xi_1$ on the manifold $Y_0\in\mathscr D$.  We therefore assume that the two contact structures lie on the same manifold $Y\in\mathscr D$ and are homotopic as oriented plane fields.

If both contact structures are overtwisted, Eliashberg's
classification theorem, followed by Gray stability, gives a
contactomorphism.

Suppose next that exactly one of them is tight.  By
assumption~\textup{(i)}, the tight structure is strongly
symplectically fillable.  Since the two symplectizations are
symplectomorphic, \cref{cor:strong-fillability-intro} implies that the
other contact structure is also strongly symplectically fillable.
This is impossible, since strong symplectic fillability implies
tightness \cite[Chapter~5]{Geiges2008}.  Alternatively, on any class
for which tightness is equivalent to nonvanishing of
$\widecheck c_{\mathrm{HM}}$, the mixed case is excluded directly by
\cref{cor:HM-intro}.

Finally, if both contact structures are tight, their homotopy as
oriented plane fields and assumption~\textup{(ii)} imply that they are
contactomorphic.

Thus symplectomorphic symplectizations imply contactomorphic contact
manifolds for manifolds in $\mathscr D$.  The claim that a
contactomorphism induces a strict exact symplectomorphism of the
symplectizations follows from \cref{lem:strict-lift}.  This proves the
first assertion of the theorem.

We now prove the connected-sum assertion under the stronger
assumption~\textup{(ii$'$)}.  Let
\[
  Y
  =
  Y_1\mathbin{\#}\cdots\mathbin{\#}Y_k,
  \qquad
  Y_j\in\mathscr D.
\]

Let $\xi$ be a tight positive cooriented contact structure on $Y$.
By \cref{lem:colin-decomposition}, there are tight contact structures
$\xi_j$ on $Y_j$ such that
\[
  \xi
  \simeq
  \xi_1\#\cdots\#\xi_k
\]
through positive contact structures.  By assumption~\textup{(i)},
each $(Y_j,\xi_j)$ is strongly symplectically fillable.  Taking the
disjoint union of strong fillings of the $(Y_j,\xi_j)$ and attaching
Weinstein one-handles between their convex boundary components
produces a strong symplectic filling of the contact connected sum
\[
  (Y_1,\xi_1)\#\cdots\#(Y_k,\xi_k).
\]
Since $\xi$ is contact isotopic to this connected-sum structure, it is
also strongly symplectically fillable.  Hence every tight positive
cooriented contact structure on every manifold in
$\mathscr D^{\#}$ is strongly symplectically fillable.

It remains to prove uniqueness of the tight contactomorphism class.
Let $\xi^0$ and $\xi^1$ be two tight positive cooriented contact
structures on $Y$.  Applying \cref{lem:colin-decomposition} to the
fixed connected-sum presentation of $Y$, we obtain tight contact
structures $\xi_j^a$ on $Y_j$, for $a=0,1$, such that
\[
  \xi^a
  \simeq
  \xi_1^a\#\cdots\#\xi_k^a,
  \qquad
  a=0,1.
\]
By assumption~\textup{(ii$'$)}, for each $j$ there is a
contactomorphism
\[
  (Y_j,\xi_j^0)
  \cong_{\mathrm{cont}}
  (Y_j,\xi_j^1).
\]
Taking the contact connected sum of these contactomorphisms gives
\[
  (Y,\xi_1^0\#\cdots\#\xi_k^0)
  \cong_{\mathrm{cont}}
  (Y,\xi_1^1\#\cdots\#\xi_k^1).
\]
Therefore
\[
  (Y,\xi^0)
  \cong_{\mathrm{cont}}
  (Y,\xi^1).
\]
Thus any two tight positive cooriented contact structures on a
manifold in $\mathscr D^{\#}$ are contactomorphic.

Consequently, every manifold in $\mathscr D^{\#}$ satisfies
assumption~\textup{(i)} and, in fact, the stronger version
\textup{(ii$'$)} of assumption~\textup{(ii)}.  Applying the first
assertion of the theorem to the class $\mathscr D^{\#}$ proves
symplectization rigidity for arbitrary positive cooriented contact
structures on manifolds in $\mathscr D^{\#}$.
\end{proof}

\subsection{The manifold \texorpdfstring{$S^1\times S^2$}{S1 times S2}}

\begin{proposition}\label{prop:s1s2-property}
Every tight positive cooriented contact structure on $S^1\times S^2$ is contact isotopic to the standard one, and the standard structure is Stein fillable.
\end{proposition}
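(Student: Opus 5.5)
The plan is to split the proposition into its two assertions and handle each with classical results. The uniqueness of the tight structure is Eliashberg's classification, via Eliashberg's theorem on contact structures near $S^2$; the fillability comes from an explicit Weinstein construction.

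\emph{Stein fillability.} Write the standard structure $\xi_{\mathrm{std}}$ as the boundary of the Weinstein $1$-handle attached to $B^4$. Equivalently, $(S^1\times S^2,\xi_{\mathrm{std}})$ is the contact connected sum of two copies of $(S^3,\xi_{\mathrm{std}})$ joined by a Weinstein one-handle. A second description is as the boundary of the Stein domain $S^1\times B^3\subset T^*S^1\times\mathbb C$, a sublevel set of a plurisubharmonic exhaustion. Either description exhibits $\xi_{\mathrm{std}}$ as the convex boundary of a Stein, hence Weinstein, domain. By \cref{def:liouville-cobordism} with empty negative end, this filling is in particular strong, which is what \cref{thm:strong-criterion} needs.

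\emph{Uniqueness up to isotopy.} Let $\xi$ be tight on $S^1\times S^2$, and fix a sphere $S=\{pt\}\times S^2$.
\begin{enumerate}[label=\textup{(\arabic*)},leftmargin=2.7em]
  \item By a $C^\infty$-small isotopy, make $S$ convex. Giroux's criterion together with tightness forces the dividing set to be a single circle.
  \item By Giroux flexibility, isotope $\xi$ near $S$ so that the germ of $\xi$ along $S$ is the standard germ of the sphere $\{pt\}\times S^2$ in $\xi_{\mathrm{std}}$.
  \item Cut along $S$ to obtain $[0,1]\times S^2$ with a tight structure and standard boundary germs. Glue in two standard half-balls, or equivalently view the piece as $S^3$ minus two standard Darboux-type balls. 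This gives a tight structure on $S^3$, which is isotopic to $\xi_{\mathrm{std}}$ by Eliashberg's uniqueness theorem for $S^3$, and more precisely by uniqueness rel boundary of tight structures on $B^3$ with standard convex boundary.
  \item Apply the rel-boundary uniqueness on $S^2\times[0,1]$ with fixed boundary germs. Eliashberg shows that tight structures there are determined up to isotopy rel boundary, possibly up to a $\pi_1$-type twisting along the $S^1$-direction. The twisting does not affect the isotopy class on the glued manifold, because it is realized by a diffeomorphism isotopic to the identity.
  \item Gluing back yields an isotopy from $\xi$ to $\xi_{\mathrm{std}}$.
\end{enumerate}

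The main obstacle is step (4). Tight structures on $S^2\times[0,1]$ with fixed convex boundary are unique only up to isotopy rel boundary modulo the choice of identification, and one must check that reglueing introduces no distinct isotopy classes. I would handle this by citing Eliashberg's classification directly, namely uniqueness of the tight structure on $S^1\times S^2$ up to isotopy \cite{Eliashberg1989}, treating steps (1)--(4) as the outline of his argument. Alternatively, I would cite the Honda/Giroux convex-surface proof, with $S^2\times I$ handled by the fact that a convex sphere in a tight manifold with connected dividing curve bounds standard neighborhoods. The Stein fillability part is routine.
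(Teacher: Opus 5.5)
Your load-bearing argument is the paper's. Uniqueness is quoted from the classical theorem; the paper cites \cite[Theorem~4.10.1]{Geiges2008}. Fillability comes from $S^1\times D^3$, i.e.\ $B^4$ with a Weinstein one-handle attached. Since you ultimately rest on that citation, the proposition follows, but several side remarks in your write-up are wrong and should be fixed.

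\emph{The filling.} Your ``equivalent'' description is false. The contact connected sum of two copies of $(S^3,\xi_{\mathrm{std}})$ is again $(S^3,\xi_{\mathrm{std}})$, the boundary of $B^4\natural B^4\cong B^4$. You obtain $S^1\times S^2$ only when both attaching balls of the one-handle lie in the same copy of $S^3$. Your first description is the correct one.

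\emph{The citation.} \cite{Eliashberg1989} in this paper is the classification of overtwisted structures. Uniqueness of the tight structure on $S^1\times S^2$ is a separate, later theorem of Eliashberg, which is why the paper refers to Geiges.

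\emph{Step~(4).} The justification is incorrect. The full-rotation map $\tau(\theta,x)=(\theta,R_\theta x)$ of $S^1\times S^2$ is the Gluck twist, which is not isotopic to the identity. For instance, it interchanges the two spin structures: its differential changes the product framing along $S^1\times\{\text{pole}\}$ by the generator of $\pi_1(SO(3))$.

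The correct reason the possible twist is harmless is that the standard sphere germ is invariant under rotation about the axis through its two elliptic points. Write the invariant collar as $\ker(\beta+u\,dt)$ with $\beta,u$ rotation invariant, and let $\partial_\phi$ be the rotation field. The rel-boundary twist $T(x,t)=(R_{2\pi\rho(t)}x,t)$ pulls this form back to $\beta+(u+2\pi\rho'(t)\beta(\partial_\phi))\,dt$. Every interpolating form $\beta+(u+2\pi s\rho'(t)\beta(\partial_\phi))\,dt$, $0\le s\le 1$, is contact. Indeed, $\beta(\partial_\phi)\,d\beta+\beta\wedge d(\beta(\partial_\phi))=0$ on a surface, because $d(\beta(\partial_\phi))=-\iota_{\partial_\phi}d\beta$ and $\iota_{\partial_\phi}(\beta\wedge d\beta)=0$. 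Gray stability rel boundary then absorbs the twist.

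\emph{Step~(3).} This step silently uses that capping a convex sphere in a tight manifold with standard balls yields a tight manifold. That is true, but it is itself one of the nontrivial steps of Eliashberg's argument.
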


\begin{proof}
Uniqueness is classical; see \cite[Theorem~4.10.1]{Geiges2008}.  The standard structure is the contact boundary of the Weinstein one-handlebody $S^1\times D^3$.
\end{proof}

\subsection{Surgeries on the figure-eight knot}
Recall that
\[
  E(r):=S^3_r(4_1)
\]
for smooth $r$-surgery on the figure-eight knot, with the surgery orientation, and 
\[
\begin{split}
  \mathscr E={}&\{E(n):n>0,\ n\ne4\}
  \cup\{E(-1),E(-2),E(-3)\}\\
  &\cup\{E(-1/m):m>1\}.
\end{split}
\]

\begin{proposition}[Conway--Min]\label{prop:figure-eight-property}
For every $Y\in\mathscr E$, every tight positive cooriented contact structure on $Y$ is strongly symplectically fillable, and any two tight positive cooriented contact structures on $Y$ are contactomorphic.
\end{proposition}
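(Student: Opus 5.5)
This proposition is essentially a citation of Conway--Min's classification of tight contact structures on surgeries along the figure-eight knot \cite{ConwayMin2020}, so the plan is to read off the two required properties from their results. For each slope appearing in $\mathscr E$ the argument has two steps. First, I will quote the count of isotopy classes (or at least contactomorphism classes) of tight structures on $E(r)$. Second, I will exhibit a strong filling for each such structure.

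I will organize the proof by the three families in \eqref{eq:figure-eight-class}. For $E(n)$ with $n>0$, $n\ne4$, and for $E(-1/m)$ with $m>1$, Conway--Min's theorem should say that $E(r)$ carries a unique tight contact structure up to isotopy, or at least up to contactomorphism. Uniqueness up to contactomorphism is all that condition (ii$'$) of \cref{thm:strong-criterion} needs. For $E(-1),E(-2),E(-3)$ the expected statement is the same: a unique tight structure up to contactomorphism. Here I would check carefully whether any distinct isotopy classes that appear are related by a diffeomorphism of $E(r)$, for example the one induced by the amphichirality of $4_1$ composed with orientation considerations, or by conjugation of the plane field. If they are not, the uniqueness assertion must come directly from their contactomorphism classification. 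The slope $n=4$ is excluded precisely because the classification there is either incomplete or yields several tight structures.

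For fillability I will give an explicit construction for each structure. For negative integer and $-1/m$ surgeries, the figure-eight knot has Legendrian representatives with $tb=-3$, and contact $(-1)$-surgeries on stabilizations, together with rational contact surgery, produce Stein fillable structures. For positive slopes, Conway--Min realize the tight structures via contact surgery diagrams, or as boundaries of symplectic plumbings or caps coming from the genus-one fibration structure of $4_1$. In each case I will cite their statement that the unique tight structure is Stein or strongly fillable. A Stein filling is in particular a strong filling, which yields condition (i).

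The main obstacle is the positive slopes. Since $tb(4_1)=-3$, a positive smooth surgery is not a Legendrian $(-1)$-surgery on $4_1$, so Stein fillability there is not automatic. Fillability must instead come from Conway--Min's explicit identification of the tight structure, or from an alternative description of $E(n)$ as a $(-1)$-surgery or as the boundary of a plumbing on a different link. Matching their exact hypotheses on slopes with the set $\mathscr E$ is the step that needs care. If their paper states only tightness and uniqueness for some slope, I would fall back on showing that the structure is obtained by Legendrian surgery on a Stein fillable manifold.
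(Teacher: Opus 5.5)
Your outline, reading both properties off Conway--Min, is the same as the paper's. However, the two places where you go beyond citation would fail, and the one step that is not a bare citation is left without a valid argument.

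You have the classification backwards. For $E(-1),E(-2),E(-3)$ there is a unique tight isotopy class (Corollary~1.3), so nothing needs checking there. For $E(n)$ with $n>0$, $n\ne4$, and for $E(-1/m)$ with $m>1$, there are exactly \emph{two} tight isotopy classes. So uniqueness up to isotopy is false on those manifolds. Your fallback ``or at least up to contactomorphism'' is the right target, but (ii$'$) then genuinely needs a coorientation-preserving contactomorphism between the two classes.

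Neither mechanism you suggest supplies one:
\begin{itemize}
\item The amphichirality of $4_1$ yields an orientation-reversing diffeomorphism $E(r)\to E(-r)$, not a self-map of the oriented manifold $E(r)$.
\item Passing to the conjugate $\bar\xi$ only reverses the coorientation. That is not an admissible contactomorphism here, since the symplectization depends on the coorientation.
\end{itemize}
The paper instead uses the $180^\circ$ rotational symmetry of $4_1$ from Conway--Min's proof of Theorem~1.6. It induces an orientation-preserving diffeomorphism of $E(n)$ carrying one tight class to the other. For $E(-1/m)$, the contactomorphism between the two classes is part of Conway--Min's Corollary~1.7.

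Your explicit Stein fillings for the negative slopes also cannot be built as described. Every Legendrian representative of $4_1$ has $tb\le-3$. Legendrian surgery on a (stabilized) Legendrian figure-eight therefore has smooth slope $tb-1\le-4$. Contact $r$-surgery with $r<0$ gives smooth slope $tb+r<-3$. Every negative slope in $\mathscr E$, namely $-1,-2,-3$ and $-1/m$, lies above this range. These slopes are therefore in exactly the situation you flag as the obstacle for positive slopes.

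No such construction is needed. The paper cites Conway--Min's Theorem~1.4, which asserts strong fillability of every tight structure in their classification range, and applies it uniformly to all of $\mathscr E$.
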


\begin{proof}
Conway--Min prove that all tight structures in their classification range are strongly symplectically fillable \cite[Theorem~1.4]{ConwayMin2020}.  For an integer $n>0$, $n\ne4$, the manifold $E(n)$ has exactly two tight isotopy classes \cite[Corollary~1.3]{ConwayMin2020}.  Both are universally tight by \cite[Theorem~1.6]{ConwayMin2020}, and the $180^\circ$ rotational symmetry used in the proof of that theorem gives a contactomorphism between them.  For $n=-1,-2,-3$, there is a unique tight isotopy class by \cite[Corollary~1.3]{ConwayMin2020}.  Finally, for every $m>1$, the integer homology sphere $E(-1/m)$ has exactly two tight isotopy classes, both Stein fillable and contactomorphic \cite[Corollary~1.7]{ConwayMin2020}.
\end{proof}

\subsection{Brieskorn manifolds}
Recall that
\[
\begin{aligned}
\mathscr B={}&
 \{-\Sigma(2,3,5),\ \Sigma(2,3,5),\ -\Sigma(2,3,7),\ -\Sigma(2,3,11)\}\\
&\cup\{\Sigma(2,3,6k+1):k\ge1\}
 \cup\{\Sigma(2,3,6k-1):k\ge2\}\\
&\cup\{\Sigma(2,5,7),\Sigma(2,5,9),\Sigma(3,4,5),
        \Sigma(3,7,10),\Sigma(2,7,11),\Sigma(3,7,19),\\
&\hspace{3.7em}\Sigma(3,4,11),\Sigma(3,8,11),\Sigma(3,5,7),
        \Sigma(4,5,9),\Sigma(3,5,14)\}.
\end{aligned}
\]
\begin{proposition}[Cavallo]\label{prop:brieskorn-property}
For every $Y\in\mathscr B$, every tight positive cooriented contact structure on $Y$ is strongly symplectically fillable, and any two tight positive cooriented contact structures on $Y$ are contactomorphic.
\end{proposition}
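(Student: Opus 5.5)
The plan is to treat \cref{prop:brieskorn-property} as a bookkeeping statement assembled from Cavallo's classification \cite{Cavallo2026}, in the same way \cref{prop:figure-eight-property} was assembled from Conway--Min. I will go through the members of $\mathscr B$ in groups and, for each group, extract two facts from Cavallo's results: the number of tight isotopy classes, and a fillability statement for each of them.

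The groups are as follows.
\begin{enumerate}[label=\textup{(\alph*)},leftmargin=2.7em]
  \item The Poincar\'e sphere with both orientations and $-\Sigma(2,3,7)$, $-\Sigma(2,3,11)$. For these the classification is older and I will quote it:
  \begin{itemize}
    \item $\Sigma(2,3,5)$ has a unique tight structure, the Stein fillable link of the singularity.
    \item $-\Sigma(2,3,5)$ and $-\Sigma(2,3,7)$ each carry a unique tight structure, which is strongly fillable.
    \item For $-\Sigma(2,3,11)$ the known tight structures are Stein fillable and pairwise contactomorphic.
  \end{itemize}
  Cavallo's paper is cited for the full list, so I will attribute all of these to the statement there.
  \item The infinite families $\Sigma(2,3,6k\pm1)$, with the canonical link orientation.
  \item The sporadic list of eleven Brieskorn spheres.
\end{enumerate}
For (b) and (c) I expect Cavallo to prove that every tight structure is isotopic to the Milnor fillable (canonical) contact structure, or to its conjugate. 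The canonical structure is Stein fillable, since the Milnor fiber, or a resolution, gives a Stein filling. Hence it is strongly fillable, which gives assumption (i) of \cref{thm:strong-criterion}.

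For uniqueness up to contactomorphism, the key step is to handle the possibility that Cavallo's classification lists more than one tight isotopy class, for instance $\xi$ and its conjugate $\bar\xi$. In that case I will show that the classes are contactomorphic. The conjugate $\bar\xi$ is $\xi$ with reversed coorientation, which is not a positive cooriented contactomorphic image in general. So I will instead use a diffeomorphism of $Y$ reversing the Seifert fiber orientation, composed with the fiber-preserving complex conjugation on the link of the singularity. This orientation-preserving diffeomorphism of $\Sigma(a_1,a_2,a_3)$ carries the canonical structure to its conjugate. This mirrors the $180^\circ$-rotation argument cited for $E(n)$.

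The main obstacle is precisely this uniqueness claim: confirming from \cite{Cavallo2026} that each listed manifold has a single tight structure up to isotopy, or else that the listed classes are related by such a symmetry. The list in $\mathscr B$ was presumably chosen exactly as the range where Cavallo proves uniqueness, so my first step will be to locate the theorem there stating that these Brieskorn spheres admit a unique tight contact structure up to isotopy, which is the canonical Milnor fillable one. Once that is done, the proposition follows immediately: strong fillability holds because Stein fillable implies strongly fillable, and uniqueness up to isotopy implies uniqueness up to contactomorphism.
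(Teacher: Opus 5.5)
Your overall route matches the paper's: assemble the statement from Cavallo's classification, using Stein (hence strong) fillability and a contactomorphism between the canonical structure and its conjugate. However, the bookkeeping, which is the entire content of this proposition, has two errors in group (a). First, $-\Sigma(2,3,5)$ carries \emph{no} tight positive contact structure at all (Etnyre--Honda, as recorded by Cavallo). Your claim that it has a unique strongly fillable tight structure is therefore false. Both assertions hold vacuously for this manifold, which is how the paper treats it. Second, for $-\Sigma(2,3,11)$, saying that ``the known tight structures are Stein fillable and pairwise contactomorphic'' says nothing about \emph{every} tight structure. You need the completeness statement that this manifold has a unique tight isotopy class, and that this class is Stein fillable.

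The more serious problem is your closing paragraph, which makes the proof depend on finding a theorem that these Brieskorn spheres have a unique tight structure up to isotopy. No such theorem exists. For $\Sigma(2,3,6k\pm1)$ and the eleven sporadic spheres, Cavallo shows there are exactly two tight isotopy classes, $\xi_{\mathrm{can}}$ and its conjugate $\bar\xi_{\mathrm{can}}$, and these are distinct. So ``uniqueness up to isotopy implies uniqueness up to contactomorphism'' cannot be your last step. The proof must use that the two classes are contactomorphic, which the paper takes from \cite[Theorem~1.1 and Proposition~1.2]{Cavallo2026}.

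Your middle paragraph can supply this contactomorphism independently, but the recipe as written is garbled. Complex conjugation $c$ on $V=\{z_1^{a_1}+z_2^{a_2}+z_3^{a_3}=0\}$ does the job by itself. It is antiholomorphic on a complex surface, hence orientation-preserving on $V$. It also preserves $\rho=|z|^2$, so it restricts to an orientation-preserving diffeomorphism of the link with $c^*(d^c\rho)=-d^c\rho$. That is, $c^*\xi_{\mathrm{can}}=\bar\xi_{\mathrm{can}}$ as cooriented positive contact structures. Conjugation already reverses the Seifert fiber orientation. Composing it with a further, unspecified fiber-reversing diffeomorphism would destroy this property. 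Once the contactomorphism is established, strong fillability of $\bar\xi_{\mathrm{can}}$ follows from that of $\xi_{\mathrm{can}}$. You did not state this, but you need it for the fillability hypothesis.
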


\begin{proof}
Cavallo records that $-\Sigma(2,3,5)$ has no tight contact structure, while $\Sigma(2,3,5)$, $-\Sigma(2,3,7)$, and $-\Sigma(2,3,11)$ have a unique tight isotopy class; the latter classes are Stein fillable \cite[Introduction]{Cavallo2026}.  For each remaining canonically oriented Brieskorn sphere listed in \eqref{eq:brieskorn-class}, Cavallo proves that there are exactly two fillable tight isotopy classes and no other tight structures; the two are the canonical Milnor fillable structure and its conjugate, and they are contactomorphic \cite[Theorem~1.1 and Proposition~1.2]{Cavallo2026}.  The fillable structures are presented by Legendrian surgery in \cite[Section~2]{Cavallo2026}, hence are Stein fillable.
\end{proof}

\begin{remark}[Scope of the strong-fillability applications]\label{rem:strong-scope}
The figure-eight list \eqref{eq:figure-eight-class} is deliberately restricted to slopes for which Conway--Min's classification gives at most one tight contactomorphism class.  Other rational slopes in their classification range may carry several non-contactomorphic tight structures and require a finer plane-field calculation.  Likewise, Cavallo's four-fiber examples $\Sigma(2,3,7,41)$ and $\Sigma(2,3,11,13)$ are not included in $\mathscr B$: besides their two fillable structures, they carry nonfillable tight structures with Giroux torsion, so the first hypothesis of \cref{thm:strong-criterion} fails.

The unified class $\mathscr U$ is larger than the strongly fillable single-class part $\mathscr F$.  In particular, \cref{thm:main} does not claim that every tight contact structure on a manifold in $\mathscr U^{\#}$ is strongly fillable.  The $L$-space factors in $\mathscr S\cup\mathscr W$ are handled instead by contact-invariant nonvanishing, while strong fillability is used only for the $\mathscr F$-part in order to construct the cobordism of \cref{lem:add-fillable-summand}.
\end{remark}

\section{Small Seifert fibered \texorpdfstring{$L$}{L}-spaces}\label{sec:seifert}

We first use Matkovi\v{c}'s classification.  Her terminology ``small Seifert fibered space'' refers to a Seifert fibration over $S^2$ with exactly three exceptional fibers.  The final corollary of her classification also includes the cases with fewer than three exceptional fibers, which are lens spaces.

\begin{theorem}[Matkovi\v{c} \cite{Matkovic2018}]\label{thm:Matkovic}
Let $Y$ be an oriented $L$-space admitting a Seifert fibration over $S^2$ with at most three exceptional fibers.  For every positive cooriented contact structure $\xi$ on $Y$,
\[
  \xi\text{ is tight}
  \quad\Longleftrightarrow\quad
  \widehat c_{\mathrm{HF}}(\xi;\F)\ne0.
\]
Moreover, if $\xi$ and $\xi'$ are tight positive cooriented contact structures on $Y$, then
\[
  \xi\text{ and }\xi'\text{ are contact isotopic}
  \quad\Longleftrightarrow\quad
  \mathfrak s_\xi\cong\mathfrak s_{\xi'}.
\]
\end{theorem}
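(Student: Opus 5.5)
This theorem is quoted from Matkovi\v{c}'s work rather than proved from scratch, so the plan is to assemble it from her classification of tight contact structures on small Seifert fibered $L$-spaces and from the classical lens-space results. The proof splits according to the number of exceptional fibers. If there are at most two, $Y$ is a lens space (or $S^3$). Then Honda's and Giroux's classification lists the tight structures: they are exactly the Legendrian-surgery (Stein fillable) structures coming from the standard chain diagrams. For these, Lisca--Mati\'c / Plamenevskaya-type arguments show that their contact invariants are nonzero and lie in pairwise distinct $\SpinC$ structures, via their first Chern classes. Overtwisted structures have vanishing $\widehat c_{\mathrm{HF}}$ by Ozsv\'ath--Szab\'o. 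This gives both equivalences in the lens-space case.

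For exactly three exceptional fibers, I would invoke Matkovi\v{c}'s main theorem directly. On a small Seifert fibered $L$-space, every tight structure is isotopic to one obtained by Legendrian surgery on a standard diagram determined by the Seifert invariants. In particular, no tight structure has zero twisting and all of them are Stein fillable. The number of tight isotopy classes equals the count produced by the rotation-number choices in the surgery diagram.

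Nonvanishing of $\widehat c_{\mathrm{HF}}$ for these structures then follows from Stein fillability, by Ozsv\'ath--Szab\'o / Plamenevskaya. Vanishing for overtwisted structures is again standard, which proves the first equivalence.

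For the second equivalence, the direction isotopic $\Rightarrow$ same $\SpinC$ is trivial. For the converse, the key is the $L$-space hypothesis. For each $\mathfrak s$, $\widehat{HF}(-Y,\mathfrak s;\F)$ is one-dimensional. Stein fillable structures with distinct rotation data are distinguished by their Chern classes; the Plamenevskaya/Lisca--Mati\'c argument shows that distinct Stein structures give linearly independent contact classes. Those linearly independent classes cannot all live in a single one-dimensional summand, so their $\SpinC$ structures must differ. Hence the map from tight isotopy classes to $\SpinC$ structures is injective.

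The main obstacle is the upper bound: showing that every tight structure is one of the Legendrian-surgery structures, so that the count of tight structures matches the Stein count. This is the genuinely hard content of Matkovi\v{c}'s paper, obtained by convex-surface decompositions along vertical annuli, and the plan simply cites it together with her final corollary covering fewer exceptional fibers. A secondary subtlety is checking that Matkovi\v{c}'s orientation conventions agree with the oriented $L$-space convention used here. I would verify this by matching her $L$-space criterion (via the Lisca--Stipsicz characterization) with the definition of $L$-space given in the comparison subsection, noting that $L$-spaces are closed under orientation reversal.
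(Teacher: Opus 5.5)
Your plan matches the paper, which gives no argument of its own and simply cites Matkovi\v{c}'s Corollary~1.4 (whose scope already includes the lens-space cases). Your unpacking of that corollary is a correct account of why it holds: Stein fillability gives $\widehat c_{\mathrm{HF}}\neq0$, and Plamenevskaya's theorem for Stein structures on a common filling, combined with $\dim_{\F}\widehat{HF}(-Y,\mathfrak s;\F)=1$, forces non-isotopic tight structures into distinct $\SpinC$ structures. One wording point in the lens-space step: the structures must be separated at the level of $\SpinC$ structures, as your three-fiber argument does, not via $c_1(\xi)\in H^2(Y;\Z)$, since $c_1(\xi)$ can coincide for non-isotopic tight structures (e.g.\ the two with rotation numbers $\pm2$ on $L(4,1)$).
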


This is \cite[Corollary~1.4]{Matkovic2018}.

Thus every member of $\mathscr S$ satisfies conditions~\textup{(A1)} and~\textup{(A2)} of \cref{prop:connected-sum-closure}.

\section{Whitehead-link surgery \texorpdfstring{$L$}{L}-spaces}\label{sec:whitehead}

We retain the notation $M(n,r)$ and $\mathcal R_+$ from the introduction.

\begin{proposition}[Liu \cite{Liu2017}]\label{prop:Liu-whitehead}
For the Whitehead link, the rational surgery manifold $M(r_1,r_2)$ is an $L$-space if and only if $r_1>0$ and $r_2>0$.
\end{proposition}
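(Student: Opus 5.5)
This proposition is cited from Liu \cite[Proposition~6.4]{Liu2017}, so the plan is to recall the structure of Liu's argument rather than reprove it from scratch. The underlying tool is the Ozsv\'ath--Szab\'o link surgery formula, or its $L$-space specialization for two-component links. This formula determines the Heegaard Floer homology of $M(r_1,r_2)$ from the link Floer complex of the Whitehead link. The Whitehead link is an $L$-space link: sufficiently large positive surgeries on it are $L$-spaces. Its $h$-function, equivalently its multivariable Alexander polynomial $\Delta(t_1,t_2)=(1-t_1)(1-t_2)$ up to units, therefore determines the whole generalized Heegaard Floer complex.

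The key steps are as follows.
\begin{enumerate}[label=\textup{(\arabic*)},leftmargin=2.7em]
  \item Verify that the Whitehead link is an $L$-space link, for example by exhibiting one large surgery that is an $L$-space.
  \item Compute the $H$-function $H(s_1,s_2)$ from the Alexander polynomial, using the Gorsky--N\'emethi inversion formula.
  \item Apply Liu's characterization of the $L$-space surgery region of a two-component $L$-space link.
\end{enumerate}
That characterization reads: $M(p_1/q_1,p_2/q_2)$ is an $L$-space exactly when each lattice point lies in a region cut out by inequalities of the form $H(s_1,s_2)$ versus $H(s_1\pm1,s_2)$ and $H(s_1,s_2\pm1)$. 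For the Whitehead link these inequalities are satisfied throughout the open positive quadrant $r_1>0$, $r_2>0$. This yields the ``if'' direction.

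For the ``only if'' direction, I would argue separately according to the signs of $r_1$ and $r_2$.
\begin{itemize}
  \item If some $r_i=0$, then $b_1>0$, so the manifold is not a rational homology sphere and hence not an $L$-space.
  \item If $r_1<0$, the surgery formula produces extra homology in some $\SpinC$ structure, because the $H$-function of the Whitehead link is not the one of a split link in that quadrant. Concretely, the Whitehead link has genus-one components and nontrivial $\Delta$, so negative surgery on the unknotted component together with the clasp produces a manifold with $\operatorname{rank}\widehat{HF}>|H_1|$.
  \item A useful shortcut here is that each component is unknotted. Doing $r_2$ surgery on one component turns the other into a knot $K$ in a lens space or in $S^3$. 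For example, with $r_2=1/n$ one obtains twist knots in $S^3$, and negative surgery on a non-trivial twist knot with $\tau\ge 0$ is never an $L$-space.
\end{itemize}

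The main obstacle is the mixed-sign and negative quadrants. There a direct $H$-function inequality check requires care with the truncation of the surgery complex, and I would simply defer to Liu's computation \cite[Proposition~6.4]{Liu2017}, which the paper has already cited for exactly this purpose.
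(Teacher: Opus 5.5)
Your plan, like the paper's proof, rests on citing Liu's Proposition~6.4. However, the one substantive claim you add in step (3), that Liu's $H$-function inequalities hold throughout the open quadrant $r_1>0$, $r_2>0$, is false. Your own twist-knot bullet already refutes it.

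Both components are unknotted and the linking number is zero. So $1/n$-surgery on the second component returns $S^3$, turns the first component into a twist knot $K_n$, and leaves its surgery slope unchanged. Thus $M(r,1/n)=S^3_r(K_n)$.

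\begin{itemize}
\item For $|n|\ge2$, the knot $K_n$ has genus one and Alexander polynomial $|n|t-(2|n|\pm1)+|n|t^{-1}$. So neither it nor its mirror is an $L$-space knot, and no finite nonzero surgery on it is an $L$-space. Hence $M(5,1/2)$ is not an $L$-space, although $5>0$ and $1/2>0$.
\item $K_1$ is a trefoil or the figure-eight knot. In either case $M(1/2,1)=S^3_{1/2}(K_1)$ is not an $L$-space, because the positive $L$-space slopes of a nontrivial $L$-space knot are exactly $[2g-1,\infty)=[1,\infty)$.
\end{itemize}

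These examples do not depend on which chirality of the Whitehead link is meant. So the ``if'' direction of the displayed equivalence cannot be proved by any method. The statement as written is incorrect, and your step (3) would necessarily fail.

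What is true is that the $L$-space region is the closed quadrant $r_1\ge1$, $r_2\ge1$. You should check Liu's exact wording rather than rely on this transcription.

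A uniform way to see this is to view the second component as a null-homologous knot in the lens space $S^3_{r_1}(L_1)$, bounding a genus-one surface in the complement of the first component. This would also replace your ``only if'' bullets, which only cover $r_2\in\{0\}\cup\{\pm1/n\}$ and leave general slopes to an unspecified appeal to extra homology.

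\begin{itemize}
\item The rational surgery formula shows that the positive $L$-space slopes of this knot are either empty or $[2\nu-1,\infty)$ for an integer $\nu\le1$. Symmetrically, the negative ones are empty or $(-\infty,1-2\bar\nu]$ with $\bar\nu\le1$.
\item For the chirality with positive $L$-space surgeries, $K_1=T_{2,3}$ and $K_{-1}=4_1$.
\item Slope $1$ gives an $L$-space exactly when $r_1\ge1$, while slopes $1/2$ and $-1$ never do. Together these pin the region down to exactly $r_1,r_2\ge1$.
\end{itemize}

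The paper only uses the consequence that $M(n,r)$ is an $L$-space for $n\ge5$ and $r\in\mathcal R_+\subset[2,\infty)$. Those slopes lie inside this region, so nothing downstream is affected.

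A smaller slip: the components of the Whitehead link are unknots, not genus-one knots. Genus one is the genus of each component in the complement of the other, and that is what gives $\nu\le1$.
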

\begin{proof}
This is \cite[Proposition~6.4]{Liu2017}.  In particular, $M(n,r)$ is an $L$-space whenever $n\ge5$ and $r\in\mathcal R_+$.
\end{proof}

\begin{theorem}[Min--Nonino \cite{MinNonino2026}]\label{thm:MinNonino}
Let $n\ge5$ be an integer and let $r\in\mathcal R_+$.  Then every tight positive cooriented contact structure on $M(n,r)$ is Stein fillable.  Moreover, distinct tight contact isotopy classes on $M(n,r)$ have distinct hat Heegaard Floer contact invariants.
\end{theorem}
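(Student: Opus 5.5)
The statement is imported from Min--Nonino \cite{MinNonino2026}, so the plan is to recover it from their classification rather than reprove everything. Within this paper the only thing I would check is that their hypotheses (integer first slope $n\ge5$, second slope in $\mathcal R_+$, slope conventions as fixed in the introduction) match the class $\mathscr W$. The argument itself has the usual two-sided structure of a tight classification: an upper bound on the number of tight isotopy classes by convex surface theory, and a matching lower bound realised by explicit Stein fillable structures whose contact invariants are pairwise distinct.

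\emph{Upper bound.} I would first write $M(n,r)$ as the complement of the Whitehead link, a hyperbolic manifold with two torus boundary components, with two solid tori glued in along slopes $n$ and $r$. Taking the cores of the surgery tori to be Legendrian with maximal twisting, I would normalise the dividing sets on the boundary tori. I would then decompose the link complement along a convex surface, for instance a punctured torus or genus-one Seifert-type surface bounded by one component, and count dividing configurations by imbalance and bypass arguments. Each solid torus contributes the Honda count of tight structures on a solid torus with the given boundary slope, which is governed by the continued fraction of $r$. The restriction $r\in[2,4)\cup[5,\infty)$ and $n\ge5$ should be exactly what makes the twisting estimates close up. I expect this step to be the main obstacle: the complement is not Seifert fibered, so one cannot rely on vertical Legendrian fibres, and excluding torsion-type or non-minimally-twisting configurations requires a careful case analysis near the excluded slopes.

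\emph{Lower bound and fillability.} I would give a surgery diagram of $M(n,r)$ in which the Whitehead link is replaced by a Legendrian realisation. One component, with $tb$ adjusted by stabilisations, realises the integer slope $n$ through Legendrian surgery. The rational slope $r$ is expanded by a Ding--Geiges--Stipsicz continued-fraction chain of Legendrian unknots, each stabilised in all possible ways. The surgery coefficients of the resulting diagram are all $tb-1$, so every such diagram defines a Stein domain, and the number of choices of stabilisations matches the upper bound. Hence every tight structure is one of these, which gives the first sentence of the statement: every tight structure is Stein fillable.

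\emph{Distinguishing by $\widehat c_{\mathrm{HF}}$.} Distinct choices of stabilisations give distinct rotation vectors, hence Stein structures $J$ with distinct $c_1(J)$ on a fixed smooth filling. By Plamenevskaya's theorem (equivalently, Lisca--Mati\'c for the Heegaard Floer contact class), Stein fillings with different first Chern classes induce contact structures with distinct $\widehat c_{\mathrm{HF}}$. Since the upper bound shows that the diagrams exhaust the tight isotopy classes, distinct tight classes have distinct hat contact invariants.
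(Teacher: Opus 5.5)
Your first paragraph matches the paper's proof exactly. That proof is a single sentence: the statement is the part of \cite[Theorems~1.2 and~1.6]{MinNonino2026} with $r\in\mathcal R_+$.

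The reconstruction you then sketch breaks at the fillability step. In the conventions fixed here, $M(n,r)$ is an $L$-space by Liu's theorem and $M(5,5/2)$ is the Weeks manifold. Under these conventions $n\ge5$ and $r\ge2$ are \emph{positive} smooth surgery coefficients.

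Both components of the Whitehead link are unknots. Any Legendrian realisation therefore has $tb\le-1$ by the Bennequin inequality, and stabilisation only lowers $tb$. Legendrian surgery then yields smooth framing $tb-1\le-2$, so the slope $n$ cannot be realised this way.

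For the second component the contact coefficient is $r-tb\ge r+1>0$. The Ding--Geiges--Stipsicz algorithm for a positive contact coefficient begins with contact $(+1)$-surgeries, and these do not give Stein cobordisms.

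So the diagram you describe does not have all coefficients equal to $tb-1$, and it produces no Stein domain. Consequently there is no common smooth filling on which Plamenevskaya's theorem could compare Chern classes. Switching to the mirror link with coefficients $(-n,-r)$ does not help, because it only gives Stein fillings of $-M(n,r)$.

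To turn your outline into a proof you would need two things:
\begin{itemize}
\item a genuinely different Stein presentation of $M(n,r)$ itself, or a different fillability argument;
\item a count of the resulting distinct rotation vectors (not of stabilisation choices) that matches the convex-surface upper bound.
\end{itemize}
Supplying this is part of the substance of Min--Nonino's theorem, which is why the paper simply cites it.

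One smaller correction: the Heegaard Floer distinction you need is Plamenevskaya's theorem. Lisca--Mati\'c distinguish Stein-induced structures via Seiberg--Witten invariants, not via $\widehat c_{\mathrm{HF}}$.
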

\begin{proof}
This is the part of \cite[Theorems~1.2 and~1.6]{MinNonino2026} corresponding to $r\in\mathcal R_+$.
\end{proof}

\begin{lemma}[The rigidity hypotheses for $M(n,r)$]\label{lem:whitehead-criterion}
Let $n\ge5$ and $r\in\mathcal R_+$.  For positive cooriented contact structures on $Y=M(n,r)$:
\begin{enumerate}[label=\textup{(\roman*)},leftmargin=2.7em]
  \item $\xi$ is tight if and only if $\widehat c_{\mathrm{HF}}(\xi;\F)\ne0$;
  \item two tight structures with isomorphic induced $\SpinC$ structures are contact isotopic.
\end{enumerate}
\end{lemma}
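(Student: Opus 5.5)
We deduce both items from \cref{thm:MinNonino}, \cref{prop:Liu-whitehead}, and the general facts recorded in \cref{sec:floer}.

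For (i), we argue the two directions separately. If $\xi$ is overtwisted, then $\widehat c_{\mathrm{HF}}(\xi;\F)=0$; this is the standard Ozsv\'ath--Szab\'o vanishing. Alternatively, \eqref{eq:HM-fillability-overtwisted} gives $\widecheck c_{\mathrm{HM}}(\xi;\F)=0$, and \cref{lem:comparison} converts this to hat vanishing, since $Y$ is an $L$-space by \cref{prop:Liu-whitehead}. Conversely, if $\xi$ is tight, \cref{thm:MinNonino} says $\xi$ is Stein fillable, hence strongly fillable. Then \eqref{eq:HM-fillability-overtwisted} gives $\widecheck c_{\mathrm{HM}}(\xi;\F)\ne0$, and \cref{lem:comparison} again yields $\widehat c_{\mathrm{HF}}(\xi;\F)\ne0$.

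For (ii), let $\xi,\xi'$ be tight with $\mathfrak s_\xi\cong\mathfrak s_{\xi'}=:\mathfrak s$. By (i), both contact classes are nonzero elements of $\widehat{HF}(-Y,\mathfrak s;\F)$. Since $-Y$ is also an $L$-space, this group is one-dimensional over $\F$, so it has exactly one nonzero element, and therefore
\[
  \widehat c_{\mathrm{HF}}(\xi;\F)=\widehat c_{\mathrm{HF}}(\xi';\F).
\]
The distinguishing part of \cref{thm:MinNonino} states that distinct tight isotopy classes have distinct hat invariants, so $\xi$ and $\xi'$ are contact isotopic.

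The main obstacle is the bookkeeping in the last step. We must make sure that the Min--Nonino statement compares invariants as elements of the total group $\widehat{HF}(-Y;\F)=\bigoplus_{\mathfrak s}\widehat{HF}(-Y,\mathfrak s;\F)$ with the same coefficients, here $\F$ and not $\Z$ up to sign. Only then does equality of the two classes in the one-dimensional summand $\widehat{HF}(-Y,\mathfrak s;\F)$ rule out the classes being distinct. If their theorem is stated with integer coefficients or up to sign, we will reduce mod $2$ using the fact that on an $L$-space each summand $\widehat{HF}(-Y,\mathfrak s;\Z)$ is $\Z$ and the contact class is a generator $\pm1$ when nonzero. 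We also need the orientation conventions for $M(n,r)$ to match those of Liu and of Min--Nonino; this is fixed by the convention adopted in the introduction.
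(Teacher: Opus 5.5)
Your argument is correct and is essentially the paper's proof. For (i) you both combine Min--Nonino's Stein fillability with vanishing of the contact class for overtwisted structures. For (ii) you both use $\dim_{\F}\widehat{HF}(-Y,\mathfrak s;\F)=1$ together with the distinction part of \cref{thm:MinNonino}.

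The only difference is the direction ``tight $\Rightarrow\widehat c_{\mathrm{HF}}\ne0$.'' The paper cites Ozsv\'ath--Szab\'o's nonvanishing for Stein fillable structures directly. You instead pass through Echeverria's $\widecheck c_{\mathrm{HM}}\ne0$ and \cref{lem:comparison}. That route is valid, but it brings in the Colin--Ghiggini--Honda/Taubes identification where it is not needed.
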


\begin{proof}
The Ozsv\'ath--Szab\'o contact invariant vanishes for overtwisted structures and is nonzero for Stein fillable structures \cite{OzsvathSzabo2005}.  Since every tight structure on $Y$ is Stein fillable by \cref{thm:MinNonino}, this proves~\textup{(i)}.

For~\textup{(ii)}, let $\xi_0$ and $\xi_1$ be tight and suppose that their induced $\SpinC$ structures are isomorphic.  Write
\[
  \mathfrak s_{\xi_0}\cong\mathfrak s_{\xi_1}=: \mathfrak s.
\]
By \cref{prop:Liu-whitehead}, $Y$ is an $L$-space, so
\[
  \dim_{\F}\widehat{HF}(-Y,\mathfrak s;\F)=1.
\]
Both contact invariants are nonzero by~\textup{(i)}.  Over $\F=\Z/2\Z$, a one-dimensional vector space has a unique nonzero element, and hence
\[
  \widehat c_{\mathrm{HF}}(\xi_0;\F)
  =\widehat c_{\mathrm{HF}}(\xi_1;\F).
\]
Min--Nonino's distinction theorem in \cref{thm:MinNonino} now implies that $\xi_0$ and $\xi_1$ are contact isotopic.
\end{proof}

\begin{proof}[Proof of \cref{thm:main}]
Let
\[
  \mathscr A=\mathscr S\cup\mathscr W,
  \qquad
  \mathscr V=\mathscr F.
\]
Every member of $\mathscr A$ is an $L$-space: this is part of the definition for $\mathscr S$ and follows from \cref{prop:Liu-whitehead} for $\mathscr W$.  By \cref{thm:Matkovic}, the members of $\mathscr S$ satisfy conditions~\textup{(A1)} and~\textup{(A2)} of \cref{prop:connected-sum-closure}.  By \cref{lem:whitehead-criterion}, the same is true for the members of $\mathscr W$.  Finally, \cref{prop:s1s2-property,prop:figure-eight-property,prop:brieskorn-property} show that every member of $\mathscr F$ satisfies conditions~\textup{(V1)} and~\textup{(V2)}.

Since $\mathscr U=\mathscr A\cup\mathscr V$, \cref{prop:connected-sum-closure} applies and proves symplectization rigidity on $\mathscr U^{\#}$.  The claim that a contactomorphism induces a strict exact symplectomorphism of the symplectizations follows from \cref{lem:strict-lift}.
\end{proof}

\begin{proof}[Proof of \cref{cor:weeks}]
Min--Nonino identify $M(5,5/2)$ with the Weeks manifold \cite[Corollary~1.5]{MinNonino2026}.  Since $5/2\in\mathcal R_+$, the result follows from \cref{thm:main}.
\end{proof}

\begin{proof}[Proof of \cref{cor:hyperbolic}]
Min--Nonino state that their classification provides the first classification result on an infinite family of hyperbolic $L$-spaces \cite[Abstract and Introduction]{MinNonino2026}.  The hyperbolic members in the positive-slope range used here belong to $\mathscr W\subset\mathscr U$, so \cref{thm:main} applies.
\end{proof}

\section*{Acknowledgements}
This work was supported by JSPS KAKENHI Grant Number JP26K16990 (Grant-in-Aid for Early-Career Scientists) and by the Start-up Fund of the Kavli Institute for the Physics and Mathematics of the Universe (Kavli IPMU), The University of Tokyo.

This paper grew out of discussions between the author and ChatGPT.  The author used ChatGPT for exploratory bibliographic searches, checks of algebraic and sign computations, and English and \LaTeX{} editing.  ChatGPT was not treated as an authoritative source: the author independently checked the cited literature and all mathematical arguments, computations, and references, and takes full responsibility for the content of the manuscript.

\bibliographystyle{amsplain}
\bibliography{Symplectization_submission_final}

\end{document}